\DeclareFontFamily{U}{mathx}{\hyphenchar\font45}
\DeclareFontShape{U}{mathx}{m}{n}{
      <5> <6> <7> <8> <9> <10>
      <10.95> <12> <14.4> <17.28> <20.74> <24.88>
      mathx10
      }{}
\DeclareSymbolFont{mathx}{U}{mathx}{m}{n}
\DeclareMathAccent{\widecheck}{0}{mathx}{"71}
\DeclareMathAccent{\wideparen}{0}{mathx}{"75}
\theoremstyle{plain}
\newtheorem{theorem}{Theorem}[section]
\newtheorem{lemma}[theorem]{Lemma}
\newtheorem{corollary}[theorem]{Corollary}
\newtheorem{claim}[theorem]{Claim}
\newtheorem{proposition}[theorem]{Proposition}
\newtheorem{conjecture}[theorem]{Conjecture}
\theoremstyle{definition}
\newtheorem{example}[theorem]{Example}
\newtheorem{remark}[theorem]{Remark}
\newtheorem{problem}[theorem]{Problem}
\newcommand{\C}{{\mathbb C}}
\newcommand{\R}{{\mathbb R}}
\newcommand{\Q}{{\mathbb Q}}
\newcommand{\Z}{{\mathbb Z}}
\newcommand{\N}{{\mathbb N}}
\newcommand{\ud}{\mathrm{d}}
\newcommand{\RN}[1]{%
  \textup{\uppercase\expandafter{\romannumeral#1}}%
}
\def\MR#1{\href{http://www.ams.org/mathscinet-getitem?mr=#1}{MR#1}}
\begin{document}

\begin{frontmatter}[classification=text]

\title{The Fourier Restriction and Kakeya Problems over Rings of Integers Modulo $N$} 

\author[Hickman]{Jonathan Hickman \thanks{Supported by NSF Grant No. DMS-1440140}}
\author[Wright]{James Wright}

\begin{abstract}
The Fourier restriction phenomenon and the size of Kakeya sets are explored in the setting of the ring of integers modulo $N$ for general $N$ and a striking similarity with the corresponding euclidean problems is observed. One should contrast this with known results in the finite field setting.
\end{abstract}
\end{frontmatter}


\section{Introduction}

In \cite{Mockenhaupt2004} Mockenhaupt and Tao introduced a variant of the classical (euclidean) Fourier restriction problem in the setting of finite fields. The point of view espoused in \cite{Mockenhaupt2004}, following an initial proposal by Wolff for the Kakeya problem, is to seek a model discrete setting in which to study various modern harmonic analysis problems (the Fourier restriction, Kakeya and Bochner--Riesz conjectures, \emph{et cetera}) which should highlight certain aspects of the euclidean problem: for instance, the underlying combinatorial or incidence-geometric features. The following Fourier restriction problem\footnote{In \cite{Mockenhaupt2004} the problem was proposed only in the setting of vector spaces over finite fields, but it can be equally formulated over any finite abelian group.} was proposed in the setting of a finite abelian group $G$.

\begin{problem} Let $\Sigma \subseteq \widehat{G}^n$ be a set of frequencies in the $n$-fold product of the dual group $\widehat{G}$. Consider the $\ell^r - \ell^s$ Fourier restriction estimates
\begin{equation}\label{FR-estimate}
\big(\frac{1}{|\Sigma|} \sum_{\xi \in \Sigma} |\hat{F}(\xi )|^s \big)^{1/s} \ \le \ C_{r,s,n} \big(\sum_{x \in G^n} |F(x)|^r \big)^{1/r}
\end{equation}
where $|\Sigma|$ denotes the cardinality of $\Sigma$. The basic problem is to determine, for a given set of frequencies $\Sigma$, those Lebesgue exponents $1 \leq r, s \leq \infty$ for which $C_{r,s,n}$ can be taken to be `essentially' independent of the cardinality of $G$.
\end{problem}

 As the estimate \eqref{FR-estimate} indicates, here the dual group ${\widehat G}$ is equipped with normalised counting measure whereas counting measure is used for the Haar measure on the original group $G$. These choices for Haar measure define the corresponding Lebesgue $\ell^r$ norms on these groups and the Fourier transform of any $F \colon G^n \to \C$ by ${\hat F}(\xi) = \sum_{x\in G} F(x) \xi(-x)$ where $\xi$ denotes a character in the dual group.

An investigation of this problem was initiated in \cite{Mockenhaupt2004} in the case where $G$ is a finite-dimensional vector space over a finite field. This proved to be an interesting discrete model for the Fourier restriction problem, isolating and highlighting various combinatorial features. Furthermore, Dvir \cite{Dvir2009} later solved the finite field version of the Kakeya problem as proposed by Wolff and the more quantitative maximal Kakeya problem was then established by Ellenberg, R. Oberlin and Tao \cite{Ellenberg2010}. 

Naturally, questions arise regarding how well the finite field variant models the euclidean setting for these problems. One obvious difference is that there are few scales to work with in the finite field setting. This is clearly manifested when studying the Fourier transform of measures carried along curves or surfaces: these are exponential sums in the finite field setting and the famous A. Weil \cite{Weil1948} (or, more generally, Deligne \cite{Deligne1974}) estimates show that, typically, either the situation is completely non-degenerate (corresponding to the non-vanishing curvature case in euclidean restriction theory, with optimal exponential sum estimates) or it is completely degenerate and only trivial estimates hold. However, in moving from finite fields (for example, the integers modulo a prime $p$) to the setting of the finite ring of integers modulo $N$ for general $N$, the divisors of $N$ provide additional scales to work with. Consequently, it has been proposed that harmonic analysis over $\Z/N\Z$  may match the euclidean case more closely.

In this paper it is shown that this is indeed the case for the Fourier restriction problem. A sample theorem is the corresponding Stein--Tomas $\ell^2$ restriction theorem for the paraboloid 
\begin{equation}\label{paraboloid definition}
\Sigma := \big\{(\vec{\omega}, \omega_1^2 + \cdots + \omega_{n-1}^2) : \ \vec{\omega} = (\omega_1,\ldots,\omega_{n-1}) \in [\Z/N\Z]^{n-1} \big\},
\end{equation}
stated here informally.

\begin{theorem}[Informal]\label{Tomas}
Let $\Sigma$ be the paraboloid in $[\Z/N\Z]^n$ as described above. If $s = 2$, then the Fourier restriction estimate \eqref{FR-estimate} holds if and only if ${1\le r \leq 2(n+1)/(n+3)}$.
\end{theorem}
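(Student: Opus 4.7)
The plan is to follow the classical Stein--Tomas $TT^*$ argument adapted to the finite abelian group setting. By duality and Plancherel, the $\ell^r$--$\ell^2$ restriction estimate \eqref{FR-estimate} with $s=2$ is equivalent to a convolution estimate of the form
\[
\|F \ast K\|_{\ell^{r'}(G^n)} \;\lesssim\; N^{n-1}\,\|F\|_{\ell^r(G^n)},
\]
where $G = \Z/N\Z$ and $K$ is the Fourier transform of the indicator function of the paraboloid $\Sigma$. Explicitly, writing $x = (\vec x, x_n) \in G^{n-1} \times G$,
\[
K(\vec x, x_n) \;=\; \sum_{\vec\omega \in G^{n-1}} \chi\bigl(\vec\omega \cdot \vec x + |\vec\omega|^2 x_n\bigr),
\]
where $\chi(t) = e^{2\pi i t/N}$ is the standard additive character on $G$. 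The problem thus reduces to obtaining sharp pointwise bounds for $K$ and exploiting them via interpolation.

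The next step is a Gauss sum analysis of $K$. Completing the square in $\vec\omega$ and applying the Chinese Remainder Theorem, $K(x)$ factorises, over primes $p \mid N$ with $p^a \| N$, into products of one-dimensional quadratic Gauss sums modulo $p^a$. Classical evaluations of such Gauss sums yield a pointwise bound of the shape
\[
|K(\vec x, x_n)| \;\lesssim\; N^{(n-1)/2}\, \gcd(N, x_n)^{(n-1)/2},
\]
away from a thin set of $x$ forced to give zero by divisibility constraints on $\vec x$; the prime $p = 2$ demands more delicate book-keeping but yields the same bound up to a bounded factor. This should be read as an exact analogue of the Euclidean decay $|\widehat{d\sigma}(x)| \lesssim (1+|x_n|)^{-(n-1)/2}$, with the ratio $\gcd(N,x_n)/N$ playing the role of a reciprocal scale.

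With this decay in hand one performs a dyadic decomposition of $K$ according to $d = \gcd(N, x_n)$, writing $K = \sum_{d \mid N} K_d$. Each piece $K_d$ satisfies $\|K_d\|_{\ell^\infty} \lesssim (Nd)^{(n-1)/2}$ and is supported on a set of cardinality $\sim N^{n-1}(N/d)$; on the Fourier side $\widehat{K_d}$ is bounded uniformly in $d$, since $\widehat{K}$ is proportional to $1_\Sigma$ and the cut-off by $\gcd(N, x_n) = d$ contributes only a bounded factor. Interpolating between the resulting $\ell^1 \to \ell^\infty$ bound $\|K_d \ast F\|_{\ell^\infty} \lesssim (Nd)^{(n-1)/2}\|F\|_{\ell^1}$ and the $\ell^2 \to \ell^2$ bound $\|K_d \ast F\|_{\ell^2} \lesssim \|F\|_{\ell^2}$ yields an $\ell^r \to \ell^{r'}$ bound for each piece in which the powers of $d$ cancel precisely when $r = 2(n+1)/(n+3)$. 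Summing over the at most $\tau(N) = N^{o(1)}$ divisors of $N$ then gives the target estimate with a constant essentially independent of $N$, and interpolation with the trivial $\ell^1 \to \ell^\infty$ bound extends the conclusion to the full range $1 \le r \le 2(n+1)/(n+3)$.

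Sharpness of the exponent $r = 2(n+1)/(n+3)$ is proved by a Knapp-type example: test \eqref{FR-estimate} against the function $F$ supported on a ``cap'' of $\Sigma$ obtained by restricting $\vec\omega$ to a coset of a subgroup $(d\Z/N\Z)^{n-1}$ for a carefully chosen divisor $d$ of $N$; direct computation of both sides then forces the claimed range of exponents. The principal technical obstacle is the Gauss sum step: one must handle the prime $p = 2$ separately and control the precise behaviour of $K$ when coordinates of $x$ fail to be coprime to $N$, while simultaneously arranging the subsequent dyadic sum so that only an acceptable divisor-function loss is incurred in the final constant.
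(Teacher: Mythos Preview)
Your overall strategy---Gauss sum bounds for the kernel, a divisor-indexed decomposition, and real interpolation between $\ell^1\to\ell^\infty$ and $\ell^2\to\ell^2$ bounds---is exactly the Tomas argument and is a legitimate route to the theorem. However, there is a genuine gap in the $\ell^2$ step. Your claim that ``$\widehat{K_d}$ is bounded uniformly in $d$'' is not correct in the sense you need, and it is inconsistent with your subsequent assertion that ``the powers of $d$ cancel precisely when $r=2(n+1)/(n+3)$.'' If the only $d$-dependence enters through $\|K_d\|_{\ell^\infty}\lesssim (Nd)^{(n-1)/2}$, then after interpolation the power of $d$ is always \emph{nonnegative}, there is nothing to cancel, and summing over $d\mid N$ is dominated by $d=N$; a short computation shows this yields only the trivial range. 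What you actually need is the $d$-dependent bound
\[
\|\widehat{K_d}\|_{\ell^\infty}\ \lesssim\ N^{n}/d,
\]
and with this the interpolated bound has $d$-exponent $(n-1)(1-\theta)/2-\theta$, which vanishes exactly at $\theta=2/r'=(n-1)/(n+1)$, giving the Stein--Tomas endpoint.

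This bound does not come for free from ``$\widehat{K}\propto 1_\Sigma$ plus a bounded cutoff''; it requires the regularity of $\Sigma$. Concretely, writing $m_d(x)=1_{\{\gcd(x_n,N)=d\}}$, one has $\widehat{K_d}(\eta)=N^{n-1}\,c_{N/d}\!\bigl(\eta_n-|\vec\eta\,'|^2\bigr)$, where $c_M$ is the Ramanujan sum, so $\|\widehat{K_d}\|_{\ell^\infty}=N^{n-1}\phi(N/d)\le N^n/d$. The equality with $\phi(N/d)$ (attained when $\eta\in\Sigma$) shows the bound is essentially sharp, so the uniform-in-$d$ claim really fails. Conceptually this is the same input the paper packages as condition (R$\Sigma$)/(R$\mu$): that $\mu(\widehat B_\rho(\xi))\lesssim \rho^{n-1}$, i.e.\ that $\Sigma$ is genuinely $(n-1)$-dimensional in the $\Z/N\Z$ ball structure. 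Once you insert this corrected $\ell^2$ bound, your argument goes through (with a $\tau(N)=O(N^{\varepsilon})$ loss from the divisor sum), and it is in fact a more hands-on alternative to the paper's route, which invokes an abstract $\ell^2$ restriction theorem built on a Bak--Seeger/Littlewood--Paley framework and requires the separate verification of the Fourier bound (F) for the ball indicators. Your necessity argument via a Knapp cap is the same as the paper's.
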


In the following section the Fourier restriction problem is precisely formulated in the setting of $[\Z/N\Z]^n$; see \eqref{general N problem inequality}.




\section{The basic setup}\label{basic setup section}

To begin some notation is introduced in order to facilitate the comparison between the the rings $\Z/N\Z$ and euclidean space. First, a notion of size or scale is allocated to elements $x \in \Z/N\Z$.  Set $|x| := N / \gcd(x,N)$ where $\gcd(a,b)$ denotes the greatest common divisor\footnote{More precisely, the function $|x| := N/\gcd(x, N)$ is defined for all integers $x \in \Z$; if $[x] \in \Z/N\Z$ is a coset containing $x \in \Z$, then $|[x]| := |x|$. It is easy to see that this function is well-defined.} of $a$ and $b$ (when $N = p^{\alpha}$ where $p$ is prime, one may think of $|\,\cdot\,|$ as a `normalised $p$-adic absolute value', where the normalisation is with respect to the ring ${\mathbb Z}/p^{\alpha}{\mathbb Z}$). It is remarked that, algebraically, $|x|$ is the cardinality of the ideal in $\Z/N\Z$ generated by $x$. This notation is extended to elements $\vec{x} = (x_1,\ldots,x_n) \in [\Z/N\Z]^n$ by $\|(x_1,\ldots,x_n)\| := N / \gcd(x_1,\ldots,x_n, N)$. Next define the partial ordering $\preceq$ amongst the integers by $a\preceq b$ if and only if $a \, | \, b$ (similarly, $a \prec b$ will be used to indicate that $a$ is a \emph{proper} divisor of $b$). This is used to compare various sizes $|\cdot|$; for example, $|x| \preceq |y|$ if and only if $\gcd(y,N) \, | \, x$. In order to isolate elements lying at different scales, one may introduce the family of balls $\{{\mathcal B}_d \}_{d \mid N}$, indexed by the divisors of $N$, given by
\begin{equation*}
{\mathcal B}_d \ := \ \{\vec{x} \in [\Z/N\Z]^n : \ \|\vec{x} \| \preceq d \}.
\end{equation*}
These balls will play a prominent r\^ole in the forthcoming analysis. One easily verifies that an element $(x_1,\ldots, x_n)$ lies in ${\mathcal B}_d$ if and only if $N/d$ divides each component $x_j$.

As mentioned above, this notation facilitates the analogy with familiar euclidean notions. The analogy is more precise if one restricts attention to powers $N=p^{\alpha}$ of a fixed prime $p$. In this case the divisors become totally ordered and, in particular, the above balls $\{{\mathcal B}_{p^{\beta}} \}_{0\le \beta \le \alpha}$ form a 1-parameter sequence of nested sets (similar to the family of euclidean balls centred at 0). The following example provides another simple illustration of this analogy, which is relevant to the discussion below.

\begin{example}\label{integral example}  Let $r \in \R$ and compare the `integrals'
\begin{equation*}
\int_{|x|\ge 1} \frac{1}{|x|^r} \, \ud x \qquad \textrm{and} \qquad \sum_{x=0}^{N-1} \frac{1}{|x|^r},
\end{equation*}
where the left-hand integral features the usual (euclidean) absolute value and the right-hand sum involves the absolute value on $\Z/N\Z$ defined above. The euclidean integral, of course, converges if and only if $r>1$. The mod $N$ sum disentangles as $\sum_{d \mid N} \phi(d) d^{-r}$ where $\phi$ is the Euler totient function. When $N=p^{\alpha}$ this sum is uniformly bounded if and only if $r>1$, whereas for general $N$ this is true only when $r>2$; however, in the range $1\le r \le 2$ the bound $\sum_{d \mid N} \phi(d) d^{-r} \leq C_{\varepsilon} N^{\varepsilon}$ holds for every $\varepsilon>0$. When $r<1$ the sum can grow like a positive power of $N$.
\end{example}

These observations suggest two natural ways in which to pose the Fourier restriction problem in the setting of the integers mod $N$: one formulation in which $N$ is only allowed to vary over powers of a fixed prime, and another for general values of $N$. These problems are described precisely below.

For simplicity, attention is restricted to the case where the set of frequencies $\Sigma$ is given by the graph of a polynomial mapping; this is a natural analogue of a smooth surface in euclidean space. In particular, let $1 \leq d \leq n-1$ and $P_1, \dots, P_{n-d} \in \Z[X_1, \dots, X_d]$ and define the polynomial mapping
\begin{equation*}
\Gamma \colon \vec{\omega} = (\omega_1, \dots, \omega_d) \mapsto (\omega_1, \dots, \omega_d, P_1(\vec{\omega}), \dots, P_{n-d}(\vec{\omega})).
\end{equation*}
For any positive integer $N$ one may reduce the coefficients of the polynomials modulo $N$ and consider $\Gamma$ as a mapping from $[\Z/N\Z]^d$ to $[\Z/N\Z]^n$. Thus, $\Gamma$ simultaneously parametrises a $d$-dimensional variety in $[\Z/N\Z]^n$ for each $N \in \N$. By an abuse of notation, in this situation $\Sigma$ will be used to denote any one of these varieties; the choice of variety (that is, the choice of $N$) should always be clear from the context.  


\begin{problem}\label{p power problem} Given a $d$-dimensional variety $\Sigma$ as above, determine the Lebesgue exponents $1 \leq r, s \leq \infty$ such that there is a constant $C = C_{\Sigma, r, s}$, depending only on $\Sigma$,  $r$ and $s$, for which the inequality
\begin{equation}\label{p power problem inequality}
\big(\frac{1}{|\Sigma|} \sum_{\xi \in \Sigma} |\hat{F}(\xi)|^s \big)^{1/s} \ \leq C
\big(\sum_{x \in [{\mathbb Z}/p^{\alpha}{\mathbb Z}]^n} |F(x)|^r \big)^{1/r}
\end{equation}
holds for all (or at least all sufficiently large) primes $p$ and all exponents $\alpha \in \N$. 
\end{problem}

It transpires that Problem \ref{p power problem} is remarkably close to the original euclidean Fourier restriction problem, both in terms of numerology and available methodologies. In fact, many of the techniques used to study the euclidean restriction problem can be translated wholesale into this discrete setting. The striking similarity between the two problems can be explained by the fact that Problem \ref{p power problem} is in fact equivalent, in some precise sense, to a Fourier restriction problem over the (continuous) field of $p$-adic numbers $\Q_p$.\footnote{The basic elements of $p$-adic analysis are reviewed later in $\S$\ref{p adic section}.} This equivalence follows from a `correspondence principle', which is a manifestation of the uncertainty principle, that allows one to `lift' restriction problems over the discrete rings $\Z/p^{\alpha}\Z$ to the continuous setting of $\Q_p$. Since the fields $\Q_p$ and $\R$ are in many ways closely related (by, for instance, Ostrowski's theorem), once this correspondence is understood it is natural to expect the two problems to behave similarly.

Working over the rings $\Z/p^{\alpha}\Z$ or the field $\Q_p$ provides an effective model for the restriction problem; many established techniques become substantially cleaner and simpler when translated into these settings. This is mainly due to the strong forms of the uncertainty principle available over $\Q_p$, owing to the fact that, unlike in the real case, in $\Q_p$ the closed unit ball forms a subgroup. Analysis over $\Q_p$ also naturally leads one to consider Fourier restriction over other local fields, and in particular the field $\mathbb{F}_q((X))$ of formal Laurent series, which, in many respects, offer even more effective model settings for harmonic analysis problems.

Problem \ref{p power problem} is investigated in detail in $\S$\ref{p adic section}, where the aforementioned correspondence principle is established. 
%

\begin{problem}\label{general N problem} Given a $d$-dimensional variety $\Sigma$ as above, determine the Lebesgue exponents $1 \leq r, s \leq \infty$ such that for every $\varepsilon>0$ there is a constant $C_{\varepsilon} = C_{\varepsilon, \Sigma, r, s}$, depending only on $\varepsilon$, $\Sigma$, $r$ and $s$, for which the inequality
\begin{equation}\label{general N problem inequality}
\big(\frac{1}{|\Sigma|} \sum_{\xi \in \Sigma} |\hat{F}(\xi)|^s \big)^{1/s} \ \leq \ C_{\varepsilon} N^{\varepsilon} \, \big(\sum_{x \in [\Z/N\Z]^n} |F(x)|^r \big)^{1/r}
\end{equation}
holds for all (or at least `most') $N \in \N$. 
\end{problem}

It is remarked that in practice it is often desirable to work with `most' rather than all $N$, avoiding certain values which lead to degenerate situations (in particular, $N$ with small prime factors relative to the ambient dimension $n$). When stating results, any such technical restrictions on $N$ will always be described explicitly. 

Once again it transpires that the numerology of this problem closely mirrors that of the euclidean case. However, the partially-ordered scale structure in $\Z/N\Z$ complicates matters and typically the arguments in this setting require additional number-theoretic information. For this reason, Problem \ref{general N problem} is, at least in some respects, arguably more complex than the euclidean problem and therefore perhaps unsuitable as a model. Restriction theory over $\Z/N\Z$ for general $N$ nevertheless appears to be rich and interesting in its own right, and the majority of the article will focus on exploring this formulation of the problem.

In order to understand the r\^ole of the scaling structure in both of these problems, it is useful to examine necessary conditions for the estimates \eqref{p power problem inequality} or \eqref{general N problem inequality} to hold when $\Sigma$ is the paraboloid, as defined in \eqref{paraboloid definition}. In this case \eqref{general N problem inequality}, for instance, can be written 
as\footnote{Throughout this article the notation $[\Z/N\Z]_*$ will be used to indicate the Pontryagin dual group of $\Z/N\Z$. The dual $[\Z/N\Z]_*$ is always tacitly identified with $\Z/N\Z$; in practice, the only distinction between $\Z/N\Z$ and $[\Z/N\Z]_*$ is that the latter is endowed with the \emph{normalised} counting measure.}
\begin{equation*}
\Bigl(\frac{1}{N^{n-1}} \sum_{\vec{\omega} \in [\Z/N\Z]_*^{n-1}} |{\hat F}(\vec{\omega}, \omega_1^2 + \cdots + \omega_{n-1}^2)|^s \Bigr)^{1/s} \ \le \ C_{\varepsilon} N^{\varepsilon} \, \|F\|_{\ell^r([\Z/N\Z]^n)},
\end{equation*}
where $\vec{\omega} = (\omega_1,\ldots,\omega_{n-1})$ and the $\ell^r$-norm on the right is the same as that appearing in \eqref{general N problem inequality}: that is, it is computed with respect to counting measure on $[\Z/N\Z]^n$.

The analysis follows the usual scaling argument in the euclidean setting; in particular, a discrete variant of the standard Knapp example is constructed. Fix a divisor $d$ of $N$ so that its square $d^2$ is also a divisor (this forces $d=1$ if $N$ is prime) and consider the parabolic rectangle
\begin{equation*}
\theta \ := \ \big\{(\vec{\omega},t)\in [\Z/N\Z]_*^n : \ \|\vec{\omega}\| \preceq |d|, \ |t| \preceq |d^2| \big\}.
\end{equation*}
Unraveling the notation, one observes that an element $(\vec{\omega},t) = (\omega_1,\ldots,\omega_{n-1},t)$ belongs to $\theta$ if and only if $d \, | \ \omega_j$
for each $1\leq j \leq n-1$ and $d^2 \, | \ t$. Let $F \colon [\Z/N\Z]^n \to \C$ be defined by $\hat{F} := \chi_{\theta}$ and apply this function to \eqref{general N problem inequality}. It is easy to check that the left-hand side of the resulting inequality is equal to $d^{-(n-1)/s}$. On the other hand, the Fourier inversion formula can be used to show that $F = d^{-(n+1)}\chi_{\theta^*}$ where $\theta^*$ is the dual rectangle
\begin{equation*}
\theta^* \ := \ \big\{(\vec{x},x_n)\in [\Z/N\Z]^n : \ \|\vec{x}\| \preceq d, \ |x_n| \preceq d^2 \big\}.
\end{equation*} 
Hence the $\ell^r$-norm on the right-hand side of \eqref{general N problem inequality} is equal to $d^{-(n+1)/r'}$ and the inequality reduces to
\begin{equation*}
d^{-(n-1)/s} \ \le \ C_{\varepsilon} N^{\varepsilon} \, d^{ - (n+1)/r'} .
\end{equation*}
If this inequality is to hold for arbitrarily large $N$ and $d$, then it follows that 
\begin{equation}\label{exponent relation}
s \frac{n+1}{n-1} \le r';
\end{equation} 
this is the same restriction on the exponents as in the euclidean setting. An almost identical analysis applies in the setting of Problem \ref{p power problem}. The scaling argument does not work, however, over finite fields, where there are few divisors; if $F$ is defined by $\hat{F} := \delta_{\vec{0}}$ rather than $\hat{F} := \chi_{\theta}$, then plugging this function into either \eqref{p power problem inequality} or \eqref{general N problem inequality} yields the less restrictive necessary condition $s \frac{n}{n-1} \le r'$, as observed in \cite{Mockenhaupt2004}.

Given the relation \eqref{exponent relation} on the exponents, one now wishes to examine the viable $\ell^r$ range. By duality, \eqref{general N problem inequality}
is equivalent to 
\begin{equation}\label{E-paraboloid}
\Bigl(\sum_{\vec{x}\in [\Z/N\Z]^{n}} |{\mathcal E}H(\vec{x})|^{r'} \Bigr)^{1/r'} \ \le \ C_{\varepsilon} N^{\varepsilon} \,
\Bigl(\frac{1}{N^{n-1}} \sum_{\vec{\omega} \in [\Z/N\Z]_*^{n-1}} | H(\vec{\omega}) |^{s'} \Bigr)^{1/s'}
\end{equation}
where ${\mathcal E}$ is the \emph{extension operator}
\begin{equation*}
{\mathcal E} H (\vec{x}) \ := \ \frac{1}{N^{n-1}} \sum_{\vec{\omega} \in [\Z/N\Z]_*^{n-1}}  H(\vec{\omega})
e^{2 \pi i (x_1 \omega_1 + \cdots x_{n-1} \omega_{n-1} + x_n (\omega_1^2 + \cdots + \omega_{n-1}^2))/N}.
\end{equation*}
When $ H := 1$ is a constant function, ${\mathcal E} 1 (\vec{x}) = \prod_{j=1}^{n-1} G_N(x_j,x_n)$ where
\begin{equation*}
G_N(a,b) \ := \ \frac{1}{N} \sum_{t=0}^{N-1} e^{2 \pi i (a t + b t^2)/N}
\end{equation*}
is a Gauss sum. One easily checks that $G_N(a,b)$ vanishes unless $\gcd(b,N) \, | \ a$ in which case, if (say) $N$ is odd, $|G_N(a,b)| = \sqrt{\gcd(b,N)/N}$. Using the above notation, these observations are succinctly expressed by the formula\footnote{It is informative to compare this analysis with its euclidean counterpart. For the euclidean problem one wishes to analyse the decay rate of the Fourier transform of some smooth, compactly supported density $\mu$ on the paraboloid in $\R^n$. In particular, $\check{\mu}$ may be expressed in terms of the oscillatory integral
\begin{equation*}
I(a,b) := \int_{\R} e^{2 \pi i (a t + b t^2)} \psi(t)\,\ud t,
\end{equation*}
where $\psi \in C_0^{\infty}(\R)$ is supported in $[-1,1]$, say. If $|a| \geq C |b|$, then the phase has no critical points and therefore $I(a,b)$ is rapidly decreasing in $|b|$. Otherwise, stationary phase \cite{Stein1993}*{Chapter VIII} implies that $|I(a,b)| \sim C|b|^{-1/2}$. This is entirely analogous to the behaviour of the Gauss sum $G_N(a,b)$ highlighted by the identity \eqref{evaluating Gauss sum}. Deeper connections between the theory of complete exponential sums and the theory of oscillatory integrals have been pursued in a number of papers of the second author \cites{Wright, Wright2011a, Wright2011}. These ideas will be discussed further at the end of $\S$\ref{L2 restriction section}.}
\begin{equation}\label{evaluating Gauss sum}
|G_N(a,b)|  = \left\{\begin{array}{ll}
|b|^{-1/2} & \textrm{if $|a| \preceq |b|$}\\
0 & \textrm{otherwise}
\end{array} \right. .
\end{equation}
Plugging $H:=1$ into \eqref{E-paraboloid} and applying the identity \eqref{evaluating Gauss sum}, it follows that the right-hand side is equal to $C_{\varepsilon} N^{\varepsilon}$ whereas the $r'$ power of the left-hand side is given by (when $N$ is odd)
\begin{equation*}
\sum_{d \mid N} \ \sum_{\gcd(x_n,N)=d} \Bigl(\frac{N}{d}\Bigr)^{n-1} \Bigl(\frac{d}{N}\Bigr)^{(n-1)r'/2}
\ = \ \ \sum_{d \mid N} \phi(d) \, d^{-(n-1)(r'/2 -1)}.
\end{equation*}
This sum is precisely of the form of that considered in Example \ref{integral example}. In particular, if \eqref{E-paraboloid} is to hold, then Example \ref{integral example} implies that necessarily $r' \geq 2n/(n-1)$, which matches the euclidean range, at least up to the endpoint. Again, an almost identical analysis applies in the setting of Problem \ref{p power problem}, utilising the differences between the $\Z/N\Z$ and $\Z/p^{\alpha}\Z$ described in Example \ref{integral example}. Alternatively, the reasoning of \cite{Mockenhaupt2004} is valid in any finite abelian group $G$ and shows that if \eqref{FR-estimate} is to hold with a uniform bound $C_{r,s,n}$, then necessarily $r' \ge 2n/d$ where $|\Sigma| \sim |G|^d$. However, the line of argument presented above has the advantage over that of \cite{Mockenhaupt2004} in that it reinforces the need to formulate the Fourier restriction problem in $[\Z/N\Z]^n$ as in \eqref{general N problem inequality}. Indeed, strict adherence to a uniform bound for $C_{r,s,n}$ in the above argument leads to the more restrictive necessary condition $r' > 2(n+1)/(n-1)$ for the $ H=1$ example.

If $s=2$, then it follows from the preceding examples that \eqref{general N problem inequality} fails for the paraboloid if $r > 2(n+1)/(n+3)$. In $\S$\ref{L2 restriction section} it is shown that \eqref{general N problem inequality} in fact holds for the paraboloid when $s=2$ in the optimal range $1\le r \leq 2(n+1)/(n+3)$. The full range of $\ell^r - \ell^s$ restriction estimates will then be established in $\S$\ref{Fourier restriction for curves section} in the $n=2$ case. The numerology will again match that of the classical euclidean estimates, up to endpoints.




\section{Tools and considerations arising from restriction theory}\label{wave packet section}

The existence of an effective Knapp example in the discrete setting suggests that many of the underlying geometric features of the euclidean Fourier restriction problem should admit some analogue over $\Z/N\Z$. This is explored in detail in the current section; in particular, it is shown that there exists a notion of wave packet decomposition over $\Z/N\Z$ and this leads one to consider certain discrete variants of the Kakeya conjecture. For comparison, the relationship between Kakeya and restriction is far more tenuous over finite fields \cites{Mockenhaupt2004, Lewko}. 

For simplicity attention is restricted to the case where the underlying surface $\Sigma$ is a hypersurface given by a graph. In particular, for the duration of this section let $h \in \Z[X_1, \dots, X_{n-1}]$ be a fixed polynomial and $\Sigma$ denote the variety 
\begin{equation*}
\Sigma := \{ (\vec{\omega}, h(\vec{\omega})) :  \vec{\omega} \in [\Z/N\Z]^{n-1}_*\}. 
\end{equation*}
 Let $\mathcal{E}$ denote the extension operator associated to $\Sigma$, given by
\begin{equation}\label{Kakeya extension operator}
\mathcal{E} H(\vec{x}\,) := \frac{1}{N^{n-1}} \sum_{\vec{\omega} \in [\Z/N\Z]_*^{n-1}} e^{2\pi i \phi(\vec{x}\,;\vec{\omega})/N} H(\vec{\omega})
\end{equation}
for all $ H \colon [\Z/N\Z]_*^{n-1} \to \C$, where $\phi$ is the phase function
\begin{equation*}
\phi(\vec{x}\,;\vec{\omega}) := x' \cdot \vec{\omega} + x_n h(\vec{\omega}) \quad \textrm{for all $\vec{x} = (x', x_n) \in [\Z/N\Z]^n$.} 
\end{equation*}

Let $d \mid N$ be a divisor of $N$. The ball $\mathcal{B}_d\subseteq \Z/N\Z$, as defined in $\S$\ref{basic setup section}, is a subgroup of $\Z/N\Z$ and therefore its cosets form a partition of the ambient ring into disjoint subsets. Define
\begin{equation}\label{Lambda definition}
\Lambda(N;d) := \{[0], [1], \dots, [ (N/d) - 1] \} \subseteq \Z/N\Z,
\end{equation}
where the notation $[x]$ is used to indicate the congruence class of $x \in \Z$ modulo $N$. Thus, for all $k \in \N$ the set $\Lambda(N;d)^k$ forms a complete set of coset representatives for $\mathcal{B}_d \subseteq [\Z/N\Z]^k$.\footnote{Throughout the article the same notation ${\mathcal B}_d$ is used to denote a balls in $[Z/NZ]^k$ or $[Z/NZ]_{*}^k$ for various $k$ depending on the situation. The choice of ambient dimension $k$ should always be clear from the context.} In terms of the scaling structure, $\Lambda(N;d)^k$ corresponds to a choice of a maximal $d$-separated subset of $[\Z/N\Z]^k$, where the notion of `separation' is understood in terms of the `norm' $\|\,\cdot\,\|$ and the $\preceq$ ordering.   

Turning to the definition of the wave packets, fix some intermediate scale $d \mid N$, let $d' := N/d$ and $\Theta_d$ denote the collection of cosets of $\mathcal{B}_{d'}$ in $[\Z/N\Z]_*^{n-1}$ and define
\begin{equation*}
\mathbb{T}_d := \Theta_d \times \Lambda(N;d)^{n-1}.
\end{equation*}
The notation is chosen here to mirror that recently used in euclidean restriction theory (see, for example, \cites{Guth2016a, Guth}). For $(\theta, \vec{v}) \in \mathbb{T}_d$ the \emph{wave packet} 
$\psi_{\theta, \vec{v}} \colon  [\Z/N\Z]_{*}^{n-1} \to \C$ is defined to be the function given by
\begin{equation*}
\psi_{\theta, \vec{v}}(\vec{\omega}) := d^{n-1} e^{-2 \pi i \vec{v} \cdot \vec{\omega}/N} \chi_{\theta}(\vec{\omega}).  
\end{equation*}
Generalising the Fourier inversion formula for the discrete Fourier transform, any $\C$-valued function on $[Z/N\Z]_{*}^{n-1}$ can be written as a superposition of wave packets in a natural manner.

\begin{lemma}\label{wave packet decomposition lemma} For any divisor $d \mid N$ the formula
\begin{equation*}
 H(\vec{\omega}) = \sum_{(\theta, \vec{v}) \in \mathbb{T}_d} (\chi_{\theta}H)\,\widecheck{}\,(\vec{v}\,) \cdot \psi_{\theta, \vec{v}}(\vec{\omega})
\end{equation*}
holds for any function $ H \colon [\Z/N\Z]_{*}^{n-1} \to \C$ .
\end{lemma}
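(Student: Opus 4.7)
The plan is to reduce the claim to a single sub-lattice Fourier inversion identity on each coset. Since the family $\Theta_d$ partitions $[\Z/N\Z]_*^{n-1}$, for any fixed $\vec\omega$ there is a unique $\theta_0 \in \Theta_d$ containing $\vec\omega$, and the factor $\chi_\theta(\vec\omega)$ annihilates every summand on the right-hand side with $\theta \neq \theta_0$. Setting $F := \chi_{\theta_0} H$ and noting that $F(\vec\omega) = H(\vec\omega)$, the lemma thus reduces to verifying
\begin{equation*}
F(\vec\omega) \ = \ d^{n-1} \sum_{\vec v \in \Lambda(N;d)^{n-1}} \check F(\vec v) \, e^{-2\pi i \vec v \cdot \vec\omega /N}
\end{equation*}
for every $F \colon [\Z/N\Z]_*^{n-1} \to \C$ supported in a single coset $\theta_0$ of $\mathcal{B}_{d'}$ and every $\vec\omega \in \theta_0$.

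To prove this reduced identity I would unfold the definition of $\check F$ under normalised counting measure, namely $\check F(\vec v) = N^{-(n-1)} \sum_{\vec u \in \theta_0} F(\vec u) e^{2 \pi i \vec u \cdot \vec v/N}$, interchange the order of summation, and reduce to evaluating the inner character sum
\begin{equation*}
S(\vec u, \vec\omega) \ := \ \sum_{\vec v \in \Lambda(N;d)^{n-1}} e^{2 \pi i (\vec u - \vec\omega)\cdot \vec v/N}.
\end{equation*}
Here the structure of $\Lambda(N;d)$ is tailored exactly to the situation: for $\vec u, \vec\omega \in \theta_0$ the difference $\vec u - \vec\omega$ lies in $\mathcal{B}_{d'}$, so each component is divisible by $d$ modulo $N$. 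Writing $u_j - \omega_j \equiv d k_j \pmod{N}$, each factor of $S$ becomes the standard orthogonality sum $\sum_{v_j=0}^{N/d-1} e^{2\pi i k_j v_j /(N/d)}$, which equals $N/d$ when $k_j \equiv 0 \pmod{N/d}$ and vanishes otherwise. Only the diagonal term $\vec u = \vec\omega$ survives, contributing $S(\vec\omega, \vec\omega) = (N/d)^{n-1}$.

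Substituting back, the factor $d^{n-1} \cdot N^{-(n-1)} \cdot (N/d)^{n-1} = 1$ cancels precisely, yielding $F(\vec\omega)$ as required. The lemma is therefore really just Fourier inversion on the finite quotient group $[\Z/N\Z]^{n-1}/\mathcal{B}_d$, packaged so that the index set $\mathbb{T}_d = \Theta_d \times \Lambda(N;d)^{n-1}$ simultaneously records the coset on the frequency side and a system of coset representatives on the physical side. There is no real obstacle beyond careful bookkeeping of the two normalisations -- the $N^{-(n-1)}$ from the measure on the dual and the compensating $d^{n-1}$ built into $\psi_{\theta, \vec v}$ -- together with the observation, already made in the excerpt, that $\Lambda(N;d)^{n-1}$ is a complete set of coset representatives for $\mathcal B_d \subseteq [\Z/N\Z]^{n-1}$.
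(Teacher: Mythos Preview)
Your proof is correct. Both your argument and the paper's reduce the identity to Fourier inversion on a quotient, but the organisation differs. The paper partitions the \emph{physical} side first: it writes $H = \sum_{\vec v} \hat\chi_{\vec v + \mathcal B_d} \ast H$ using the partition of unity $\{\chi_{\vec v + \mathcal B_d}\}_{\vec v \in \Lambda(N;d)^{n-1}}$, then invokes the Poisson-type duality $\hat\chi_{\mathcal B_d} = d^{n-1}\chi_{\mathcal B_{d'}}$ to identify each convolution with $d^{n-1}(\chi_\theta H)\,\widecheck{}\,(\vec v)\,e^{-2\pi i \vec v\cdot\vec\omega/N}$. You instead partition the \emph{frequency} side first, fixing the unique $\theta_0 \ni \vec\omega$ and reducing to a single coset, then verify the remaining identity by a bare-hands orthogonality computation on the character sum over $\Lambda(N;d)^{n-1}$. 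Your route is slightly more elementary in that it avoids naming the convolution and the Fourier transform of $\chi_{\mathcal B_d}$ explicitly; the paper's route makes the underlying subgroup duality $\mathcal B_d \leftrightarrow \mathcal B_{d'}$ more visible. Both are equally short and there is nothing to choose between them mathematically.
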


If $d = 1$, then the collection $\Theta_d$ comprises of a single set $\theta = [\Z/N\Z]^{n-1}_*$ and the above identity reduces to 
\begin{equation*}
 H(\vec{\omega}) = \sum_{\vec{v} \in [\Z/N\Z]^{ n-1}} \check{H}(\vec{v}\,) e^{-2 \pi i \vec{v} \cdot \vec{\omega}/N},
\end{equation*}
which is precisely the Fourier inversion formula over $\Z/N\Z$.

\begin{proof}[Proof (of Lemma \ref{wave packet decomposition lemma})] The functions $\chi_{\vec{v} + \mathcal{B}_{d}}$ for $\vec{v} \in \Lambda(N;d)^{n-1}$ form a partition of unity of $[\Z/N\Z]^{n-1}$ and thus, by the Fourier inversion formula,
\begin{equation*}
H(\vec{\omega}) = \sum_{\vec{v} \in \Lambda(N;d)^{n-1}} \hat{\chi}_{\vec{v}+\mathcal{B}_{d}} \ast H(\vec{\omega}).
\end{equation*}
A simple computation shows that 
\begin{equation*}
\hat{\chi}_{\mathcal{B}_d} = d^{n-1} \chi_{\mathcal{B}_{d'}} 
\end{equation*}
and therefore
\begin{equation*}
\hat{\chi}_{\vec{v} + \mathcal{B}_{d}} \ast  H(\vec{\omega}) = d^{n-1}(e^{2 \pi i\vec{v} \cdot (\,\cdot\,)/N}\chi_{\mathcal{B}_{d'}})\ast H(\vec{\omega}) = d^{n-1}(\chi_{\vec{\omega} + \mathcal{B}_{d'}}  H)\;\widecheck{}\;(\vec{v}\,) \cdot e^{-2 \pi i \vec{v} \cdot \vec{\omega}/N}. 
\end{equation*}
If $\vec{\omega} \in \theta$, then $\vec{\omega} + \mathcal{B}_{d'} = \theta$, and so the desired identity immediately follows.
\end{proof}

The extension operator $\mathcal{E}$ has a particularly simple action on wave packets, mapping each $\psi_{\theta, \vec{v}}$ to a modulated characteristic functions of a `tube'. In particular, given a divisor $d \mid N$ and $(\theta, \vec{v}) \in \mathbb{T}_d$, define the $d$-\emph{tube} $T_{\theta, \vec{v}}$ to be the set
\begin{equation*}
T_{\theta, \vec{v}} := \big\{ \vec{x} = (x',x_n) \in [\Z/N\Z]^n : \|x' + x_n \partial_{\omega}h(\vec{\omega}_{\theta}) - \vec{v}\,\| \preceq d\big\},
\end{equation*}
where $\vec{\omega}_{\theta} \in [\Z/N\Z]_*^{n-1}$ denotes the unique coset representative of $\theta$ lying in $\Lambda(N;d')^{n-1}$. With this definition, one has the following identity. 

\begin{lemma}\label{wave packet lemma} Suppose $N, d \in \N$ are such that $d \mid N$ and $N \mid d^2$. Then for all $(\theta, \vec{v}) \in \mathbb{T}_d$ one has
\begin{equation*}
\mathcal{E}\psi_{\theta, \vec{v}}(\vec{x}\,) = e^{2 \pi i (\phi(\vec{x}\,;\vec{\omega}_{\theta}) - \vec{v} \cdot \vec{\omega}_{\theta})/N} \chi_{T_{\theta, \vec{v}}}(\vec{x}\,).
\end{equation*}
\end{lemma}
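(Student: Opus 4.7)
The plan is to substitute the definitions of $\mathcal{E}$ and $\psi_{\theta,\vec{v}}$ and reduce the claim to the evaluation of a single character sum over the subgroup $\mathcal{B}_{d'} \subseteq [\Z/N\Z]^{n-1}_*$. Since $\theta$ is a coset of $\mathcal{B}_{d'}$, every $\vec{\omega} \in \theta$ can be written as $\vec{\omega} = \vec{\omega}_\theta + \vec{u}$ with $\vec{u} \in \mathcal{B}_{d'}$. Splitting the phase and the linear modulation along this decomposition gives
\[
\mathcal{E}\psi_{\theta,\vec{v}}(\vec{x}\,) = \frac{d^{n-1}}{N^{n-1}} e^{2\pi i (\phi(\vec{x};\vec{\omega}_\theta) - \vec{v}\cdot\vec{\omega}_\theta)/N} \sum_{\vec{u} \in \mathcal{B}_{d'}} e^{2\pi i (\phi(\vec{x};\vec{\omega}_\theta+\vec{u}) - \phi(\vec{x};\vec{\omega}_\theta) - \vec{v}\cdot\vec{u})/N},
\]
so the goal reduces to showing the inner sum equals $(N/d)^{n-1}$ exactly when $\vec{x} \in T_{\theta,\vec{v}}$ and vanishes otherwise.

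The next step is to linearise the phase in $\vec{u}$. Since $\vec{u} \in \mathcal{B}_{d'}$, each component of $\vec{u}$ is a multiple of $d$ in $\Z/N\Z$. Any product of two or more components is therefore divisible by $d^2$, and the hypothesis $N \mid d^2$ makes every such product vanish modulo $N$. Applying this to the (finite) Taylor expansion of the polynomial $h$ about $\vec{\omega}_\theta$ yields
\[
h(\vec{\omega}_\theta + \vec{u}) \equiv h(\vec{\omega}_\theta) + \partial_\omega h(\vec{\omega}_\theta) \cdot \vec{u} \pmod{N},
\]
so that $\phi(\vec{x};\vec{\omega}_\theta+\vec{u}) - \phi(\vec{x};\vec{\omega}_\theta) \equiv \bigl(x' + x_n\partial_\omega h(\vec{\omega}_\theta)\bigr)\cdot\vec{u} \pmod{N}$. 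The inner sum becomes a pure character sum
\[
\sum_{\vec{u}\in\mathcal{B}_{d'}} e^{2\pi i \vec{\xi}\cdot\vec{u}/N}, \qquad \vec{\xi} := x' + x_n\partial_\omega h(\vec{\omega}_\theta) - \vec{v}.
\]

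Finally, I would use the standard fact that a character sum over a finite subgroup equals the order of the subgroup if the character is trivial on it and vanishes otherwise. The character $\vec{u}\mapsto e^{2\pi i \vec{\xi}\cdot\vec{u}/N}$ is trivial on $\mathcal{B}_{d'}$ precisely when $d\,\xi_j \equiv 0 \pmod N$ for every $j$, i.e.\ when $(N/d)\mid \xi_j$ for all $j$, which is exactly the condition $\|\vec{\xi}\,\| \preceq d$ defining the tube $T_{\theta,\vec{v}}$. The sum then equals $|\mathcal{B}_{d'}| = (N/d)^{n-1}$, and combining with the prefactor $d^{n-1}/N^{n-1}$ produces the claimed formula.

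The only genuinely non-routine ingredient is the polynomial-linearisation step: the hypothesis $N\mid d^2$ is precisely what is needed to kill all quadratic-and-higher terms in $\vec{u}$, and it is this rigid, exact linearisation (with no error terms, no amplitudes) that sharpens the usual euclidean wave-packet heuristic into an honest identity. Everything else is bookkeeping of cosets and Haar measure on the subgroup $\mathcal{B}_{d'}$.
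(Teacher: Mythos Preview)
Your proof is correct and follows essentially the same approach as the paper's own argument: both expand the extension operator applied to $\psi_{\theta,\vec{v}}$, parametrise $\theta$ as $\vec{\omega}_\theta + \mathcal{B}_{d'}$ (the paper writes this concretely as $\vec{\omega}_\theta + d\vec{\omega}$ with $\vec{\omega}$ ranging over $\{0,\dots,d'-1\}^{n-1}$), use the hypothesis $N \mid d^2$ to linearise the phase in the subgroup variable, and then evaluate the resulting character sum over $\mathcal{B}_{d'}$ to obtain $\chi_{T_{\theta,\vec{v}}}$. The only difference is that you phrase the character-sum step in terms of the annihilator of the subgroup, whereas the paper states the identity for $\chi_{T_{\theta,\vec{v}}}$ directly; these are the same computation.
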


\begin{proof} It follows from the definitions that
\begin{align*}
\mathcal{E}\psi_{\theta, \vec{v}}(\vec{x}\,) &= \frac{1}{(d')^{n-1}}  \sum_{\vec{\omega} \in \theta} e^{2 \pi i (\phi(\vec{x}\,;\vec{\omega}) - \vec{v}\cdot \vec{\omega}) /N} \\
&= \frac{1}{(d')^{n-1}} \sum_{\omega_1, \dots, \omega_{n-1} = 0}^{d' -1} e^{2 \pi i (\phi(\vec{x}\,;\vec{\omega}_{\theta} + d \vec{\omega}) - \vec{v}\cdot (\vec{\omega}_{\theta} + d \vec{\omega})) /N},
\end{align*}
where $\vec{\omega} = (\omega_1, \dots, \omega_{n-1})$. Since $N \mid d^2$, one may verify that
\begin{equation*}
\phi(\vec{x}\,;\vec{\omega}_{\theta} + d\vec{\omega})  \equiv \phi(\vec{x}\,;\vec{\omega}_{\theta}) + d\big(x' + x_n\partial_{\omega}h(\vec{\omega}_{\theta})\big)\cdot \vec{\omega} \mod N.
\end{equation*}
On the other hand, the basic properties of character sums imply the identity 
\begin{equation*}
\chi_{T_{\theta, \vec{v}}}(\vec{x}\,) =\frac{1}{(d')^{n-1}}\sum_{\omega_1, \dots, \omega_{n-1} = 0}^{d' -1} e^{2 \pi i (x' + x_n\partial_{\omega}h(\vec{\omega}_{\theta}) - \vec{v})\cdot \vec{\omega} /d'}. 
\end{equation*}
Combining these observations, the desired result immediately follows.
\end{proof}

Lemma \ref{wave packet lemma} provides a plethora of functions with which to test the extension operator $\mathcal{E}$. 

\begin{example} The Knapp example introduced in $\S$\ref{basic setup section} falls under the present framework, and simply corresponds to testing the extension operator against a single wave packet.
\end{example}

\begin{example} The constant function 1, which was again considered in $\S$\ref{basic setup section}, can also be analysed via wave packets. Indeed, in this case one has a particularly simple decomposition
\begin{equation*}
1 = \frac{1}{d^{n-1}} \sum_{\theta \in \Theta_d} \psi_{\theta, \vec{0}}(\vec{\omega}) \qquad \textrm{for all $\vec{\omega} \in [\Z/N\Z]_*^{n-1}$}.
\end{equation*} 
If $N \mid d^2$, then applying Lemma \ref{wave packet lemma} to the above identity yields
\begin{equation}\label{wave packets for 1}
\mathcal{E} 1 (\vec{x}\,) = \frac{1}{d^{n-1}} \sum_{\theta \in \Theta_d} e^{2 \pi i \phi(\vec{x}\,;\vec{\omega}_{\theta}) /N} \chi_{T_{\theta, \vec{0}}}(\vec{x}\,).
\end{equation}
Now consider the prototypical example $h(\vec{\omega}) := \omega_1^2 + \dots + \omega_{n-1}^2$ and suppose $N$ is odd. Using \eqref{wave packets for 1}, one may give a conceptually different proof of the estimate
\begin{equation}\label{Fourier decay}
|\mathcal{E} 1 (\vec{x}\,)| \leq \|\vec{x}\,\|^{-(n-1)/2} \qquad \textrm{for all $\vec{x} \in [\Z/N\Z]^n$}
\end{equation}
which was established in $\S$\ref{basic setup section}. For simplicity suppose that $N = d^2$ is a perfect square and $\vec{x} \in [\Z/N\Z]^n$ satisfies $\|\vec{x}\,\| = N$; extending the argument to general $N$ and $\vec{x}$ involves some technicalities which will not be discussed here.\footnote{The proof for general odd $N$ (that is, not necessarily given by a perfect square) relies on evaluating Gauss sums and therefore does not offer a truly alternative approach to \eqref{Fourier decay} from that used in $\S$\ref{basic setup section}. The wave packet method does, however, provide an interesting geometric interpretation of the estimate.} Under these hypotheses, it is easy to see that $\vec{x}$ can lie in at most one of the tubes $T_{\theta, \vec{0}}$ and \eqref{Fourier decay} follows immediately from \eqref{wave packets for 1}. 

The inequality \eqref{Fourier decay} can be interpreted as measuring the decay of the Fourier transform of the normalised counting measure on $\Sigma$; such estimates play an important r\^ole in $\S$\ref{L2 restriction section}. 
\end{example}

\begin{example} Consider once again the paraboloid $h(\vec{\omega}) := \omega_1^2 + \dots + \omega_{n-1}^2$. The full conjectured range of estimates for the extension operator (as computed in $\S$\ref{basic setup section}) would imply the following `endpoint' estimate. 

\begin{conjecture}[Fourier restriction conjecture for the paraboloid over $\Z/N\Z$]\label{endpoint restriction conjecture} For all $\varepsilon > 0$ there exists a constant $C_{\varepsilon} >0$ for which the inequality\footnote{ Recall that here one uses counting measure on the group $G = [\Z/N\Z]^{n-1}$ and normalised counting measure on dual group $[\Z/N\Z]_{*}^{n-1}$ and that these measures define the $\ell^r$ norms. In particular, 
\begin{equation*}
 \| H \|_{\ell^{r}([\Z/N\Z]_*^{n-1})} = \Big(\frac{1}{N^{n-1}} \sum_{\vec{\xi} \in [\Z/N\Z]_*^{n-1}} |H(\vec{\xi}\, )|^r \Big)^{1/r}.
\end{equation*}}
\begin{equation}\label{endpoint restriction}
\|\mathcal{E} H\|_{\ell^{2n/(n-1)}([\Z/N\Z]^n)} \leq C_{\varepsilon} N^{\varepsilon} \| H \|_{\ell^{2n/(n-1)}([\Z/N\Z]_*^{n-1})}
\end{equation}
holds for all odd $N \in \N$.
\end{conjecture}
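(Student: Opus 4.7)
The statement is the endpoint Fourier restriction conjecture for the paraboloid, the precise $\Z/N\Z$ analogue of the classical euclidean restriction conjecture. The plan is to follow the modern euclidean programme, using the wave packet decomposition of Lemmas \ref{wave packet decomposition lemma} and \ref{wave packet lemma} to reduce \eqref{endpoint restriction} to a geometric incidence statement about the $d$-tubes $T_{\theta,\vec{v}}$ at a critical scale $d$ satisfying $N \mid d^2$. A key observation is that Lemma \ref{wave packet lemma} supplies an \emph{exact} pointwise identity rather than the approximate spatial localisation familiar from the euclidean theory, because each ball $\mathcal{B}_d$ is a genuine subgroup of $[\Z/N\Z]^k$; this rigidity should make the usual pigeonhole and induction-on-scales arguments cleaner in this setting than over $\R^n$.

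The first concrete step is to establish a Bennett--Carbery--Tao type multilinear restriction estimate: for extension operators $\mathcal{E}_1, \dots, \mathcal{E}_n$ associated to $n$ transverse caps of the paraboloid one seeks
\begin{equation*}
\Bigl\| \prod_{j=1}^n \mathcal{E}_j H_j \Bigr\|_{\ell^{2n/(n-1)}([\Z/N\Z]^n)} \leq C_\varepsilon N^\varepsilon \prod_{j=1}^n \|H_j\|_{\ell^2([\Z/N\Z]_*^{n-1})},
\end{equation*}
with transversality formalised using the partial order $\preceq$ and the $|\cdot|$-size of the relevant Jacobian determinant. The proof would follow an induction-on-scales scheme running through the divisors of $N$, the base case being a discrete Loomis--Whitney inequality, which is particularly transparent here because of the subgroup structure of $\mathcal{B}_d$ and the exactness of Lemma \ref{wave packet lemma}.

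The second step is a Bourgain--Guth broad/narrow decomposition passing from the multilinear estimate to the linear bound \eqref{endpoint restriction} in a partial $r$-range. This naturally requires a Kakeya-type estimate controlling the overlap of tubes $T_{\theta,\vec{v}}$ with essentially different directions; such an estimate is precisely the Kakeya conjecture over $[\Z/N\Z]^n$ anticipated by the preceding discussion. A potentially cleaner alternative, bypassing Kakeya, is to run a polynomial-partitioning argument in the style of Guth directly in $[\Z/N\Z]^n$; the polynomial method has already proven extraordinarily effective in discrete settings (witness Dvir's solution of the finite-field Kakeya problem), and its implementation here should benefit from the exact uncertainty principle of Lemma \ref{wave packet lemma}.

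The principal obstacle is the same one that keeps the euclidean conjecture open: closing the gap between a multilinear (or polynomial-partitioning) estimate and the linear endpoint exponent $2n/(n-1)$. Even granting a full discrete Kakeya conjecture in $[\Z/N\Z]^n$, the Bourgain--Guth reduction would yield \eqref{endpoint restriction} only in a sub-endpoint range. A genuine endpoint proof at this level of generality is therefore out of reach at present, and a realistic first target is to match the strongest known euclidean partial results, while exploiting the extra algebraic structure of $\Z/N\Z$ (notably the subgroup property of the balls $\mathcal{B}_d$ and the number-theoretic scaling encoded in the divisor lattice) wherever it genuinely streamlines the argument.
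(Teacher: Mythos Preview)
The statement in question is labelled \emph{Conjecture} in the paper, and the paper does not prove it: it is presented as the $\Z/N\Z$ analogue of the full euclidean restriction conjecture for the paraboloid, which is of course open. What the paper actually establishes unconditionally is the Stein--Tomas range (Theorem~\ref{rigorous Tomas}), and in $\S$\ref{wave packet section} it uses the conjecture only hypothetically, to derive the Kakeya maximal estimate \eqref{Kakeya maximal 1} as a consequence.

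Your proposal is therefore not to be compared against a proof in the paper --- there is none --- but assessed as a research programme. On that score your final paragraph is exactly right: the Bourgain--Guth/polynomial-partitioning machinery you outline would, at best, match the current euclidean partial progress and would not reach the endpoint $2n/(n-1)$; you correctly identify this as the obstruction. The earlier paragraphs are a reasonable sketch of how one would port the modern euclidean toolkit to $\Z/N\Z$, and your remarks about the exactness of Lemma~\ref{wave packet lemma} and the subgroup structure of the balls $\mathcal{B}_d$ are well taken (indeed the paper exploits precisely these features). But none of this is a proof, and you should not present it as one: the honest summary is that the conjecture is open, mirrors the open euclidean problem, and your outline indicates how to transfer known partial results rather than how to resolve the conjecture.
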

Note that the constant in this inequality must involve some dependence on $N$, even if $N$ is restricted to powers of a fixed prime, owing to the behaviour of $\mathcal{E}1$, as discussed above and in $\S$\ref{basic setup section}. 

Assume Conjecture \ref{endpoint restriction conjecture} holds and let $N = d^2$ be an odd perfect square. Fix $\tilde{\Theta}_d \subseteq \Theta_d$, assign a choice of $\vec{v}_{\theta} \in [\Z/N\Z]^{n-1}$ to each $\theta \in \tilde{\Theta}_d$ and consider the function
\begin{equation*}
 H := \sum_{\theta \in \tilde{\Theta}_d} r_{\theta} \psi_{\theta, \vec{v}_{\theta}}
\end{equation*}
where each $r_{\theta}$ is a choice of complex coefficient with $|r_{\theta}| = 1$. One may easily compute that
\begin{equation*}
 \|H\|_{\ell^{2n/(n-1)}([\Z/N\Z]_*^{n-1})} =d^{(n+1)(n-1)/2n} |\tilde{\Theta}_d|^{(n-1)/2n} = \Big(\sum_{\theta \in \tilde{\Theta}_d} |T_{\theta}|\Big)^{(n-1)/2n}
\end{equation*}
whilst Lemma \ref{wave packet lemma} implies that
\begin{equation*}
\mathcal{E} H(\vec{x}\,) =  \sum_{\theta \in \tilde{\Theta}_d} r_{\theta} e^{2 \pi i (\phi(\vec{x}\,;\vec{\omega}_{\theta}) - \vec{v}_{\theta} \cdot \vec{\omega}_{\theta})/N} \chi_{T_{\theta}}(\vec{x}\,)
\end{equation*}
where, for notational simplicity, $T_{\theta} := T_{\theta, \vec{v}_{\theta}}$. 
If the $r_{\theta}$ are chosen to be independent, identically distributed random signs ($\pm 1$), then Khintchine's inequality (see, for instance, \cite{Stein1970}*{Appendix D}, or \cite{Haagerup1981} for the precise version used here) implies that the expected value of 
$|\mathcal{E} H (\vec{x}\,)|$ satisfies
\begin{equation*}
\mathbb{E}[|\mathcal{E} H(\vec{x}\,)|] \geq 2^{-1/2} \Big( \sum_{\theta \in \tilde{\Theta}_d} \chi_{T_{\theta}}(\vec{x}\,) \Big)^{1/2}.
\end{equation*}
Thus, the hypothesised endpoint restriction estimate implies that for all $\varepsilon >0$ the inequality
\begin{equation}\label{Kakeya maximal 1}
\big\| \sum_{\theta \in \tilde{\Theta}_d} \chi_{T_{\theta}} \big\|_{\ell^{n/(n-1)}([\Z/N\Z]^n)} \leq 2 C_{\varepsilon/2}^2 N^{\varepsilon} \Big( \sum_{\theta \in \tilde{\Theta}_d} |T_{\theta}| \Big)^{(n-1)/n}
\end{equation}
holds for all odd perfect squares $N \in \N$, where $C_{\varepsilon}$ is the same constant as that appearing in \eqref{endpoint restriction}. This estimate is a geometric statement concerning intersections of tubes in the module $[\Z/N\Z]^n$. In particular, the expression appearing on the left-hand side of \eqref{Kakeya maximal 1} is a discrete analogue of the Kakeya maximal operator (see, for instance, \cites{Katz2002, Wolff1999}). The theory of such maximal operators, which governs the underlying geometry of the restriction problem, is investigated systematically in the following section.
\end{example}





\section{Kakeya sets in \texorpdfstring{ $\Z/N\Z$}{ZNZ}}\label{Kakeya section}




\subsection{Discrete formulations of the Kakeya conjectures} The previous section highlighted a connection between estimates for the parabolic extension operator over $\Z/N\Z$ and a discrete variant of the Kakeya maximal operator. Here the theory of Kakeya sets over $\Z/N\Z$ is explored in a more systematic manner, beginning with a cleaner formulation of the maximal inequality \eqref{Kakeya maximal 1}.

The Kakeya problem over $\Z/N\Z$ concerns configurations of lines in $[\Z/N\Z]^n$ that point in `different directions'. An elegant way to formulate a notion of direction for lines lying in these modules is to use the ring-theoretic construction of the projective space.\footnote{This perspective was recently used in connection with the Kakeya problem by Caruso \cite{Caruso}.}

For future reference it is useful to formulate the definitions at the general level of unital rings. Given a ring $R$ with identity define the $(n-1)$-dimensional sphere $\mathbb{S}^{n-1}(R)$ to be the set of all elements of $R^n$ that have at least one invertible component. In particular, note that $\mathbb{S}^{0}(R) = R^{\times}$ is the group of units of $R$, which acts on the set $\mathbb{S}^{n-1}(R)$ by left multiplication. The $(n-1)$-dimensional projective space $\mathbb{P}^{n-1}(R)$ is defined to be the set of orbits of this action; that is,
\begin{equation*}
\mathbb{P}^{n-1}(R) := \mathbb{S}^{n-1}(R)/\mathbb{S}^0(R).
\end{equation*} 
Finally, given $\omega \in \mathbb{P}^{n-1}(R)$ a set $\ell_{\omega} \subseteq R^n$ is said to be a \emph{line in the direction of} $\omega$ if there exists some $\vec{v} \in R^n$ such that
\begin{equation*}
\ell_{\omega} = \big\{ t \vec{\omega} + \vec{v} : t \in R \big\}
\end{equation*}
for some (and therefore any) choice of representative $\vec{\omega} \in \mathbb{S}^{n-1}(R)$ for $\omega$. In the case $R = \Z/N\Z$ it will often be notationally convenient to write $\mathbb{S}^{n-1}(N)$ and $\mathbb{P}^{n-1}(N)$ rather than $\mathbb{S}^{n-1}(\Z/N\Z)$ and $\mathbb{P}^{n-1}(\Z/N\Z)$. 

\begin{conjecture}[Kakeya maximal conjecture over $\Z/N\Z$]\label{Kakeya maximal conjecture} For all $\varepsilon > 0$ there exists a constant $C_{\varepsilon} >0$ such that the following holds. If $N \in \N$ and $\ell_{\omega}$ is a choice of line in the direction of $\omega$ for each $\omega \in \mathbb{P}^{n-1}(N)$, then 
\begin{equation}\label{Kakeya maximal 2}
\big\| \sum_{\omega \in \mathbb{P}^{n-1}(N)} \chi_{\ell_{\omega}} \big\|_{\ell^{n/(n-1)}([\Z/N\Z]^n)} \leq C_{\varepsilon} N^{\varepsilon} \big( \sum_{\omega \in \mathbb{P}^{n-1}(N)} |\ell_{\omega}| \big)^{(n-1)/n}.
\end{equation}
\end{conjecture}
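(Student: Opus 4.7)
The plan is first to reduce to the prime-power case $N = p^\alpha$ via the Chinese Remainder Theorem and then to attack prime powers by induction on $\alpha$, exploiting the nested filtration $\{\mathcal{B}_{p^\beta}\}_{0\le\beta\le\alpha}$. The base case $\alpha = 1$ is precisely the finite-field Kakeya maximal conjecture of Wolff, proven by Ellenberg, R.~Oberlin and Tao \cite{Ellenberg2010} via the polynomial method; the constant they produce depends only on $n$, and this uniformity in $p$ is exactly what should allow the induction to close with only an $N^\varepsilon$ loss. The reduction to prime powers uses the ring isomorphism $\Z/N\Z \cong \prod_{p \mid N} \Z/p^{\alpha_p}\Z$, under which lines, directions and both sides of \eqref{Kakeya maximal 2} decompose multiplicatively; since $\prod_{p \mid N} p^{\alpha_p\varepsilon} = N^\varepsilon$, only uniformity of the constant in $p$ and $\alpha$ is required at each local factor.

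For fixed $p$ and $\alpha \ge 2$, the idea is to push the tube configuration forward to $[\Z/p\Z]^n$ via reduction mod $p$, apply the $\alpha = 1$ estimate to control the coarse incidences, and then recover the fine-scale information fiber by fiber. Each fiber of the projection $[\Z/p^\alpha\Z]^n \to [\Z/p\Z]^n$ is a coset of $(p\Z/p^\alpha\Z)^n \cong [\Z/p^{\alpha-1}\Z]^n$, and by a Hensel-type lifting the distinct directions $\omega \in \mathbb{P}^{n-1}(p^\alpha)$ sitting over a fixed coarse direction reduce to distinct directions in $\mathbb{P}^{n-1}(p^{\alpha-1})$ within the fiber, so the translated tubes form a genuine Kakeya configuration for the smaller ring and the induction hypothesis applies. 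The exponent $n/(n-1)$ is precisely the scaling-critical one at which the product of the coarse incidence bound and the fiberwise bound reassembles into the desired estimate, as the Knapp-style calculation of $\S$\ref{basic setup section} would predict.

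The principal obstacle is the presence of ``degenerate'' directions $\omega \in \mathbb{P}^{n-1}(p^\alpha)$ no representative of which has a unit coordinate modulo some intermediate power $p^\beta$. Upon reduction mod $p^\beta$ such a line collapses to a proper affine subspace of $[\Z/p^\beta\Z]^n$, so the naive projection can force many distinct fine-scale directions to coincide at coarse scales and inflate multiplicities in a bush-like configuration not controlled by the straightforward induction. Handling this appears to require stratifying $\mathbb{P}^{n-1}(p^\alpha)$ according to the coarsest scale at which each direction becomes degenerate and running a separate multi-scale argument on each stratum, in the spirit of the two-ends and bush reductions in Euclidean Kakeya theory, adapted to the partial-order scale structure of $\Z/N\Z$. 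This is where one most expects genuinely new input to be needed, beyond a direct translation of known finite-field or Euclidean techniques, and it is the reason the conjecture is stated rather than proven.
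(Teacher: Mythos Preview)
This statement is a \emph{conjecture} in the paper, not a theorem; the paper does not prove it and offers no proof to compare against. You appear to recognise this in your final sentence, so your ``proof proposal'' is really a discussion of a plausible line of attack together with an honest account of where it stalls. That is a reasonable thing to write, but it should not be labelled a proof.

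Two remarks on the content of your sketch. First, your CRT reduction is more delicate than you indicate: while $\mathbb{P}^{n-1}(N) \cong \prod_{p\mid N}\mathbb{P}^{n-1}(p^{\alpha_p})$ and each individual line factors, an \emph{arbitrary} assignment $\omega \mapsto \ell_\omega$ need not be a product assignment. Two directions $\omega,\omega'$ sharing the same $p$-component may be sent to lines with different $p$-translates, so $\sum_\omega \chi_{\ell_\omega}$ is not in general a tensor product of the local sums, and the $\ell^{n/(n-1)}$ norm does not factor. One can try to remedy this by bounding the worst case via the local maximal operators, but this is an additional step, not a triviality. Second, the fibre picture in your induction is not quite right: lines in $[\Z/p^\alpha\Z]^n$ over a fixed coarse direction do not straightforwardly restrict to a Kakeya configuration in $[\Z/p^{\alpha-1}\Z]^n$ within each fibre; the intersection of a line with a single mod-$p$ fibre is typically a single point, not a line. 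The multi-scale structure you would need is closer to the wave-packet/tube picture in $\S$\ref{wave packet section} than to a naive fibration.

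For context, the paper's actual contributions toward this conjecture are the $\ell^2$ bound of Proposition~\ref{Cordoba proposition} (C\'ordoba's argument, sharp for $n=2$) and the set-level bound of Proposition~\ref{slicing proposition}. The full maximal conjecture remains open.
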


If all the lines $\ell_{\omega}$ happened to be disjoint, then the above inequality would hold with equality and the constant $C_{\varepsilon}N^{\varepsilon}$ replaced with 1. Thus, the estimate can be interpreted as stating that collections of direction-separated lines in $[\Z/N\Z]^n$ are `almost disjoint'. 

It is not difficult to adapt the analysis of the previous section to show that (at least for $N$ odd) the restriction conjecture for the paraboloid over $\Z/N\Z$ implies the Kakeya maximal conjecture over $\Z/N\Z$. This closely mirrors the euclidean case; as mentioned previously, the relationship between Kakeya and restriction over finite fields is far more tentative \cites{Mockenhaupt2004, Lewko}. 

Given a commutative ring with identity $R$, a set $K \subseteq R^n$ is said to be \emph{Kakeya} if for every $\omega \in \mathbb{P}^{n-1}(R)$ there exists a line $\ell_{\omega}$ in the direction of $\omega$ contained in $K$. The maximal inequality \eqref{Kakeya maximal 2} implies a lower bound on the cardinality of Kakeya sets in $[\Z/N\Z]^n$. Indeed, suppose $K \subseteq [\Z/N\Z]^n$ is Kakeya so that $\bigcup_{\omega \in \mathbb{P}^{n-1}(N)} \ell_{\omega} \subseteq K$ where each $\ell_{\omega}$ is a line in the direction of $\omega \in \mathbb{P}^{n-1}(N)$. Observe that \eqref{Kakeya maximal 2} together with H\"older's inequality imply that
\begin{align}
\nonumber
\sum_{\omega \in \mathbb{P}^{n-1}(N)} |\ell_{\omega}|  &= \big\| \sum_{\omega \in \mathbb{P}^{n-1}(N)} \chi_{\ell_{\omega}} \big\|_{\ell^{1}([\Z/N\Z]^n)} \\
\label{Kakeya maximal 3}
&\leq C_{\varepsilon} N^{\varepsilon} |K|^{1/n} \big( \sum_{\omega \in \mathbb{P}^{n-1}(N)} |\ell_{\omega}| \big)^{(n-1)/n}.
\end{align}
The sum $\sum_{\omega \in \mathbb{P}^{n-1}(N)} |\ell_{\omega}|$ appearing on both sides of this inequality can be explicitly computed. Indeed, 
\begin{equation}\label{line cardinality}
|\ell_{\omega}| = N \quad \textrm{for all $\omega \in \mathbb{P}^{n-1}(N)$}
\end{equation}
whilst the cardinality of the projective space is given by 
\begin{equation}\label{projective cardinality}
|\mathbb{P}^{n-1}(N)| =  N^{n-1}\prod_{p \mid N\, \mathrm{prime}} \sum_{j=0}^{n-1} p^{-j} \geq N^{n-1},
\end{equation} 
where the product is taken over the set of all distinct prime factors of $N$. The latter identity is a direct consequence of the formula 
\begin{equation}\label{sphere cardinality}
|\mathbb{S}^0(N)| = N \prod_{p \mid N\, \mathrm{prime}} (1 - 1/p);
\end{equation}
the details of the (simple) proofs of the identities \eqref{projective cardinality} and \eqref{sphere cardinality} are provided at the end of the section.
 
Rearranging \eqref{Kakeya maximal 3} and applying the identities \eqref{line cardinality} and \eqref{projective cardinality}, one concludes that the Kakeya maximal conjecture implies the following variant of the Kakeya set conjecture.

\begin{conjecture}[Kakeya set conjecture over $\Z/N\Z$]\label{Kakeya conjecture} For all $\varepsilon > 0$ there exists a constant $c_{\varepsilon,n} > 0$ such that the density bound 
\begin{equation*}
\frac{|K|}{N^{n}} \geq c_{\varepsilon,n} N^{-\varepsilon}
\end{equation*}
holds for any Kakeya set $K \subseteq [\Z/N\Z]^n$. 
\end{conjecture}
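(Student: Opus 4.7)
The plan is to deduce the density bound directly from the Kakeya maximal conjecture (Conjecture 4.1), making precise the rearrangement sketched in the discussion preceding the statement. Fix $\varepsilon > 0$ and a Kakeya set $K \subseteq [\Z/N\Z]^n$. By definition, for every direction $\omega \in \mathbb{P}^{n-1}(N)$ there is a line $\ell_\omega \subseteq K$ in the direction of $\omega$, so $\sum_{\omega} \chi_{\ell_\omega}$ is supported in $K$. H\"older's inequality applied on $K$ with exponents $n$ and $n/(n-1)$ gives
$$\sum_{\omega \in \mathbb{P}^{n-1}(N)} |\ell_\omega| = \Bigl\|\sum_\omega \chi_{\ell_\omega}\Bigr\|_{\ell^1([\Z/N\Z]^n)} \leq |K|^{1/n} \Bigl\|\sum_\omega \chi_{\ell_\omega}\Bigr\|_{\ell^{n/(n-1)}([\Z/N\Z]^n)},$$
and Conjecture 4.1, applied with parameter $\varepsilon/n$, bounds the last norm by $C_{\varepsilon/n} N^{\varepsilon/n} \bigl(\sum_\omega |\ell_\omega|\bigr)^{(n-1)/n}$. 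Rearranging produces $|K| \geq C_{\varepsilon/n}^{-n}\, N^{-\varepsilon}\, \sum_\omega |\ell_\omega|$.

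It therefore remains to show $\sum_\omega |\ell_\omega| \geq N^n$, which breaks into two independent ingredients. First, any representative $\vec{\omega} \in \mathbb{S}^{n-1}(N)$ has at least one invertible coordinate, so the parametrisation $t \mapsto t\vec{\omega} + \vec{v}$ is injective on $\Z/N\Z$ and $|\ell_\omega| = N$. Second, $|\mathbb{P}^{n-1}(N)| \geq N^{n-1}$: I would invoke the Chinese remainder theorem to factor both $\mathbb{S}^{n-1}(N)$ and $\mathbb{S}^0(N)$ multiplicatively across the prime divisors of $N$, reducing to prime-power modulus $N = p^\alpha$. There $|\mathbb{S}^0(p^\alpha)| = p^\alpha(1-p^{-1})$ is the classical Euler totient, while $|\mathbb{S}^{n-1}(p^\alpha)| = p^{n\alpha}(1-p^{-n})$ follows from counting the complement of the unique maximal submodule $(p\Z/p^\alpha\Z)^n$; the ratio is $p^{\alpha(n-1)}\sum_{j=0}^{n-1} p^{-j}$, and taking the product over primes $p \mid N$ gives the formula \eqref{projective cardinality}, whence $|\mathbb{P}^{n-1}(N)| \geq N^{n-1}$.

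Assembling these pieces yields $|K|/N^n \geq C_{\varepsilon/n}^{-n} N^{-\varepsilon}$, establishing the conjecture with $c_{\varepsilon,n} := C_{\varepsilon/n}^{-n}$. The only substantive obstacle is Conjecture 4.1 itself; the remaining work is elementary linear algebra together with a standard multiplicativity computation, and requires no further number-theoretic input. A more ambitious alternative would try to circumvent the maximal conjecture by adapting Dvir's polynomial method, but one must contend with the absence of a Schwartz--Zippel-type vanishing lemma for non-field $\Z/N\Z$; reducing modulo a prime divisor of $N$ loses the scale structure and is unlikely to recover the sharp $N^{-\varepsilon}$ loss directly.
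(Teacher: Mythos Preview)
Your derivation is correct and follows essentially the same route as the paper: both reduce Conjecture~\ref{Kakeya conjecture} to the maximal conjecture via H\"older's inequality on the support $K$, then feed in the identities $|\ell_\omega|=N$ and $|\mathbb{P}^{n-1}(N)|\ge N^{n-1}$, the latter proved by multiplicativity and a prime-power count. Your explicit bookkeeping with the parameter $\varepsilon/n$ and the computation of $|\mathbb{S}^{n-1}(p^\alpha)|$ via the complement of $(p\Z/p^\alpha\Z)^n$ are minor presentational variants of the paper's argument.
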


This can be understood as a discrete analogue of the upper-Minkowski dimension conjecture for Kakeya sets in $\R^n$. It is remarked that similar discrete variants of the Kakeya conjecture have previously appeared in the literature: see, for instance, \cites{Caruso, Dummit2013, Ellenberg2010}.




\subsection{Sharpness of the Kakeya conjecture}

It is natural to ask whether the $\varepsilon$-loss in $N$ is necessary in Conjecture \ref{Kakeya conjecture}: that is, whether there exists a dimensional constant $c_n > 0$ such that $N^{-n} |K| \geq c_{n}$ holds for all Kakeya sets $K \subseteq [\Z/N\Z]^n$ (independently of $N$). It transpires that such an estimate is false, even if one restricts $N$ to vary over powers of a fixed prime.

\begin{proposition}\label{measure zero Kakeya} For all primes $p$ there exists a strictly increasing integer sequence $(\alpha(s))_{s \in \N}$ and family of Kakeya sets $K_s \subseteq [\Z/p^{\alpha(s)}\Z]^n$ such that
\begin{equation*}
\lim_{s \to \infty} \frac{|K_s|}{p^{\alpha(s)n}} = 0. 
\end{equation*}
\end{proposition}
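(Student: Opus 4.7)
The plan is to deduce the statement from the existence of a Haar-measure-zero Besicovitch set in the $p$-adic integers and then discretise. More precisely, suppose $E \subseteq \Z_p^n$ is a closed set with the following two properties: $E$ has Haar measure zero, and for each direction $\tilde{\omega} \in \mathbb{P}^{n-1}(\Z_p)$ there exist $\vec{v} \in \Z_p^n$ and a primitive representative $\vec{\omega} \in \mathbb{S}^{n-1}(\Z_p)$ of $\tilde{\omega}$ for which the $\Z_p$-line $\{\vec{v} + t\vec{\omega} : t \in \Z_p\}$ is contained in $E$. Setting $\alpha(s) := s$ and $K_s := \pi_s(E) \subseteq [\Z/p^s\Z]^n$, where $\pi_s \colon \Z_p^n \twoheadrightarrow [\Z/p^s\Z]^n$ is the coordinatewise reduction modulo $p^s$, I claim that $(K_s)_{s \in \N}$ is the required sequence.

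The existence of such a set $E$ is the $p$-adic analogue of the classical Besicovitch theorem and can be extracted from the constructions of Dummit--Hablicsek, or of Ellenberg--Oberlin--Tao in the closely related function-field setting. Alternatively one can give a direct $p$-adic Perron-tree argument: iteratively subdivide a starting block of $\Z_p^n$ into ultrametric sub-balls and slide them so as to force overlap of the required line segments while preserving a sufficiently rich collection of directions, then extract a closed limit by compactness.

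Granted $E$, the verification that each $K_s$ is Kakeya is routine: any $\omega \in \mathbb{P}^{n-1}(p^s)$ lifts to some $\tilde{\omega} \in \mathbb{P}^{n-1}(\Z_p)$, because $\pi_s$ sends $\Z_p^{\times}$ onto $(\Z/p^s\Z)^{\times}$ and therefore $\mathbb{S}^{n-1}(\Z_p)$ onto $\mathbb{S}^{n-1}(p^s)$ compatibly with the actions by units. The line $\tilde{\ell} \subseteq E$ in direction $\tilde{\omega}$ provided by hypothesis then reduces to a full line $\pi_s(\tilde{\ell}\,) \subseteq K_s$ in direction $\omega$. The density calculation is also immediate: the fibres of $\pi_s$ are cosets of $p^s\Z_p^n$, each of normalised Haar measure $p^{-sn}$, so if $\mu$ denotes the Haar measure on $\Z_p^n$ then
\begin{equation*}
\frac{|K_s|}{p^{sn}} \ = \ \mu\bigl(\pi_s^{-1}(K_s)\bigr) \ = \ \mu\bigl(E + p^s\Z_p^n\bigr).
\end{equation*}
The sets $E + p^s\Z_p^n$ form a decreasing family of open neighbourhoods of $E$ with intersection equal to $\overline{E} = E$, whence continuity of $\mu$ yields $\mu(E + p^s\Z_p^n) \to \mu(E) = 0$.

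The main obstacle is the first step, namely exhibiting a closed $p$-adic Besicovitch set of Haar measure zero; all subsequent steps — the lifting of directions and the measure-theoretic limit — are essentially formal once such an $E$ has been constructed. I would expect the $p$-adic Perron-tree construction to be somewhat cleaner than its Euclidean counterpart, because the ultrametric property makes ball overlaps exact rather than approximate, but the combinatorial-geometric core of the construction remains unavoidable.
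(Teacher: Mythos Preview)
Your argument is correct, but it inverts the logical flow of the paper. The paper proves Proposition~\ref{measure zero Kakeya} by a direct, self-contained construction in the discrete ring: for each $s$ one takes $\alpha(s)=sp^{s}$ and, adapting Sawyer's Euclidean construction, defines explicit lines $\ell_{[(1,\omega)]}=\{(t,\phi_{\omega}(t)):t\in\Z/p^{\alpha}\Z\}$ via a carefully chosen shift $\phi_{\omega}$ built from the $p$-adic digits of $\omega$, and then verifies by an elementary pigeonhole that the union of these lines has density at most $p^{-s}$. Only afterwards (in \S\ref{p adic section}) does the paper lift this discrete construction to produce a measure-zero Kakeya set in $\Z_p^n$. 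You go the other way: you take the $p$-adic Besicovitch set $E$ as given, then push it down via $\pi_s$ and use continuity of Haar measure. Your reduction from $E$ to the $K_s$ is entirely correct and pleasantly formal.

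What each approach buys: the paper's route is self-contained and yields the explicit quantitative bound $|K_s|/p^{\alpha(s)n}\le p^{-s}$, whereas your route outsources all the work to the construction of $E$ and gives only a soft $o(1)$. Within the paper your argument would be circular, since the paper \emph{derives} the existence of $E$ from this very proposition; but as you note, one can instead cite the independent constructions of Dummit--Hablicsek, Caruso, or Fraser. One small correction: Ellenberg--Oberlin--Tao \cite{Ellenberg2010} establishes \emph{lower} bounds for Kakeya sets over function fields and does not construct small Kakeya sets, so that reference does not supply the $E$ you need. Also, the known non-archimedean constructions are closer in spirit to Sawyer/Besicovitch-type arguments than to a Perron tree, so your sketch of an ``ultrametric Perron tree'' would require some genuine new work rather than a routine translation.
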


This observation should be contrasted with Dvir's theorem in the finite field setting \cite{Dvir2009}. The latter states that there exists a dimensional constant $c_n > 0$ such that $q^{-n}|K| \geq c_{n}$ holds whenever $K \subseteq \mathbb{F}_q^n$ is a finite field Kakeya set.\footnote{The finite field analogue of the stronger maximal function estimate was established by Ellenberg, Oberlin and Tao in \cite{Ellenberg2010}. It is also useful to contrast the form of the conjectured maximal function estimate \eqref{Kakeya maximal 2} (and, in particular, the (necessary) $\varepsilon$-loss in $N$ in the constant) with the finite field result from \cite{Ellenberg2010}.}

Proposition \ref{measure zero Kakeya} follows by adapting a (euclidean-based) construction due to Sawyer \cite{Sawyer1987} (see also \cites{Wisewell2005, Wolff2003} and \cite{Fraser2016}).  

\begin{proof} It suffices to consider the case $n=2$: the general case then follows by taking the Cartesian product of the set $K_s$ given by the 2-dimensional example with $[\Z/p^{\alpha(s)}\Z]^{n-2}$ for each $s \in \N$. Furthermore, it suffices to construct a sequence of sets containing lines in only those directions which can be represented by an element of the form $(1, \omega)$ for some $\omega \in \Z/p^{\alpha}\Z$. Indeed, one may then form a sequence of true Kakeya sets by taking finite unions of rotated copies of these objects. 

Fixing $p$ and $s \in \N$, let $\alpha := sp^s$. For each $\omega \in \Z/p^{\alpha}\Z$ let $\omega_j \in \{0,1, \dots, p-1\}$ for $0 \leq j \leq \alpha - 1$ denote the coefficients in the $p$-adic expansion of the unique class representative of $\omega$ in $\{0,1 \dots, p^{\alpha} - 1\}$.\footnote{That is, the $\omega_j \in \{0, 1, \dots, p-1\}$ are uniquely defined by the formula $\displaystyle \omega = \big[\sum_{j = 0}^{\alpha-1} \omega_j p^j\big]$.} Using this notation, for each $\omega \in \Z/p^{\alpha}\Z$ define a map $\phi_{\omega} \colon \Z/p^{\alpha}\Z \to  \Z/p^{\alpha}\Z$ by
\begin{equation*}
\phi_{\omega}(t) := t \omega + \big[\sum_{j=0}^{\alpha - 1}  \big\lfloor \frac{j}{s} \big\rfloor \cdot \omega_j p^j \big],
\end{equation*}
where $\lfloor \, \cdot \, \rfloor \colon \R \to \Z$ denotes the floor function. Thus,
\begin{equation*}
\ell_{[(1,\omega)]} := \big\{(t, \phi_{\omega}(t)) : t \in \Z/p^{\alpha}\Z\big\}
\end{equation*}
is a line in the direction of $[(1,\omega)] \in \mathbb{P}^1(p^{\alpha})$. Since, by Fubini,
\begin{equation}\label{measure zero Kakeya 1}
\big|\bigcup_{\omega \in \Z/p^{\alpha}\Z} \ell_{[(1,\omega)]}\big| = \sum_{t \in \Z/p^{\alpha}\Z} |\{ \phi_{\omega}(t) : \omega \in \Z/p^{\alpha}\Z\}|,
\end{equation}
it suffices to show that 
\begin{equation}\label{measure zero Kakeya 2}
|\{ \phi_{\omega}(t) : \omega \in \Z/p^{\alpha}\Z\}| \leq p^{\alpha - s} \qquad \textrm{for all $t \in \Z/p^{\alpha}\Z$.}
\end{equation}
Indeed, once this is established one may define $K_s$ to be the union of lines appearing on the left-hand side of \eqref{measure zero Kakeya 1}, noting that the above inequality implies that $p^{-2\alpha}|K_s| \leq p^{-s}$.

 Fix $t \in \Z/p^{\alpha}\Z$ and identify this element with a coset representative $t \in \{0,1, \dots, p^{\alpha} - 1\}$. Let $t' \in \{0,\dots, p^{s} - 1\}$ be the unique element satisfying $t' \equiv -t \bmod p^s$ and define $k := st'$, noting that $0 \leq k \leq s(p^s - 1) \leq \alpha-s$. It follows that
\begin{equation*}
t \equiv -\big\lfloor\frac{j}{s}\big\rfloor \mod p^s \quad \textrm{for all $k \leq j \leq k+ s-1$.}
\end{equation*}

For any $\omega' \in \Z/p^{\alpha}\Z$ there exists a unique $\omega \in \Lambda(p^{\alpha}, p^{\alpha-k})$ such that $|\omega - \omega'| \preceq p^{\alpha - k}$. Here $\Lambda(p^{\alpha}, p^{\alpha-k})$ is the maximal set of $p^{\alpha - k}$-separated points in $\Z/p^{\alpha}\Z$ defined in \eqref{Lambda definition}. In particular, $\omega_j = \omega_j'$ for $0 \leq j \leq k-1$ and so
\begin{equation*}
|\phi_{\omega}(t) - \phi_{\omega'}(t)| = \bigg|\sum_{j=k}^{\alpha-1} \big(t + \big\lfloor\frac{j}{s}\big\rfloor\big)\big(\omega_j-\omega_j'\big)p^j\bigg|.
\end{equation*}
The construction ensures that $\big|t + \lfloor\tfrac{j}{s}\rfloor\big| \preceq p^{\alpha-s}$ for all $k \leq j \leq k+s-1$ from which it follows that $|\phi_{\omega}(t) - \phi_{\omega'}(t)| \preceq p^{\alpha-k-s}$. Thus,
\begin{equation*}
\{ \phi_{\omega}(t) : \omega \in \Z/p^{\alpha}\Z\} \subseteq \bigcup_{\omega \in \Lambda(p^{\alpha}, p^{\alpha-k})} \mathcal{B}_{p^{\alpha-k-s}}(\phi_{\omega}(t)),
\end{equation*}
which immediately yields \eqref{measure zero Kakeya 2}.
\end{proof}




\subsection{Standard Kakeya estimates over \texorpdfstring{$\Z/N\Z$}{ZNZ}}

Many of the standard techniques used to investigate the euclidean Kakeya problem can be adapted to study Conjecture \ref{Kakeya maximal conjecture} and Conjecture \ref{Kakeya conjecture}. Here two examples are given: the standard $L^2$ maximal argument of C\'ordoba \cite{Cordoba1977} and a basic slicing argument. The former resolves Conjecture \ref{Kakeya maximal conjecture} (and therefore also Conjecture \ref{Kakeya conjecture}) in the $n=2$ case, whilst the latter provides a discrete analogue of the elementary $(n+1)/2$-dimensional bound for Kakeya sets.

\subsubsection*{C\'ordoba's argument} By adapting the classical argument of \cite{Cordoba1977}, one may establish the following elementary bound (which implies Conjecture \ref{Kakeya maximal conjecture} in the $n=2$ case). 

\begin{proposition}\label{Cordoba proposition} For all $\varepsilon > 0$ there exists a constant $C_{\varepsilon} >0$ such that the following holds. If $N \in \N$ and $\ell_{\omega}$ is a choice of line in the direction of $\omega$ for each $\omega \in \mathbb{P}^{n-1}(N)$, then 
\begin{equation}\label{Kakeya maximal 4}
\big\| \sum_{\omega \in \mathbb{P}^{n-1}(N)} \chi_{\ell_{\omega}} \big\|_{\ell^{2}([\Z/N\Z]^n)} \leq C_{\varepsilon} N^{n/2-1 + \varepsilon} \big( \sum_{\omega \in \mathbb{P}^{n-1}(N)} |\ell_{\omega}| \big)^{1/2}.
\end{equation}
\end{proposition}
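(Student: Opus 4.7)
The plan is to adapt C\'ordoba's classical $L^2$ argument to the ring-theoretic setting. First, I would decompose $\mathbb{P}^{n-1}(N) = V_1 \sqcup \dots \sqcup V_n$, where $V_i$ consists of projective directions whose smallest-index unit component is the $i$-th one. Within each $V_i$, every $\omega$ admits a canonical representative $\vec{\omega}$ with a $1$ in the $i$-th coordinate (by multiplying the given representative by the inverse of that component), and may therefore be identified with its remaining entries $\tilde{\omega} \in [\Z/N\Z]^{n-1}$. Writing $\sum_\omega \chi_{\ell_\omega} = \sum_i \sum_{\omega \in V_i} \chi_{\ell_\omega}$, applying the triangle inequality in $\ell^2$, and then Cauchy--Schwarz in the finite index $i$, it suffices to prove \eqref{Kakeya maximal 4} with the left-hand side restricted to a single chart $V_i$ (at the cost of a harmless factor of $\sqrt{n}$ absorbed into $C_\varepsilon$).

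The second step is the standard expansion
\begin{equation*}
\bigl\| \sum_{\omega \in V_i} \chi_{\ell_\omega} \bigr\|_{\ell^2([\Z/N\Z]^n)}^2 = \sum_{\omega, \omega' \in V_i} |\ell_\omega \cap \ell_{\omega'}|,
\end{equation*}
together with the following key intersection estimate: for distinct $\omega, \omega' \in V_i$,
\begin{equation*}
|\ell_\omega \cap \ell_{\omega'}| \leq N/\|\tilde{\omega} - \tilde{\omega}'\|.
\end{equation*}
I would establish this by a direct calculation. If $\ell_\omega \cap \ell_{\omega'} = \emptyset$ the bound is trivial; otherwise, translate so that a common point lies at the origin, so that the intersection is the image of the set of $t \in \Z/N\Z$ satisfying $t(\tilde{\omega} - \tilde{\omega}') = \vec{0}$ in $[\Z/N\Z]^{n-1}$. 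This is the annihilator of the ideal generated by the components of $\tilde{\omega} - \tilde{\omega}'$, and has cardinality $\gcd(\tilde{\omega}_1 - \tilde{\omega}'_1, \dots, \tilde{\omega}_{n-1} - \tilde{\omega}'_{n-1}, N) = N/\|\tilde{\omega} - \tilde{\omega}'\|$.

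Finally, I would partition the off-diagonal pairs according to the divisor $d := \|\tilde{\omega} - \tilde{\omega}'\|$ of $N$. Since $\mathcal{B}_d \subseteq [\Z/N\Z]^{n-1}$ has cardinality $d^{n-1}$, for each fixed $\tilde{\omega}$ there are at most $d^{n-1}$ choices of $\tilde{\omega}'$ with $\|\tilde{\omega} - \tilde{\omega}'\| \preceq d$. Combining this count with the intersection bound and adding the trivial diagonal contribution yields
\begin{equation*}
\sum_{\omega, \omega' \in V_i} |\ell_\omega \cap \ell_{\omega'}| \leq N \cdot |V_i| + \sum_{d \mid N} N^{n-1} \cdot d^{n-1} \cdot \frac{N}{d} = N \cdot |V_i| + N^n \sum_{d \mid N} d^{n-2} \leq C_\varepsilon N^{2n-2+\varepsilon},
\end{equation*}
where the final step invokes the standard divisor bound $\tau(N) = O_\varepsilon(N^\varepsilon)$. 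Taking square roots and comparing with the right-hand side of \eqref{Kakeya maximal 4} (which is of order $N^{n/2-1+\varepsilon} \cdot (\sum_\omega |\ell_\omega|)^{1/2} \sim N^{n-1+\varepsilon}$) concludes the argument. The main obstacle is the intersection estimate of step two: unlike in $\R^n$, two distinct projective directions over $\Z/N\Z$ can be ``close'' and the corresponding lines may share up to $N/d$ points, so the proof requires a module-theoretic computation in place of the Euclidean angular reasoning, together with the slightly delicate choice of charts used to normalise direction representatives.
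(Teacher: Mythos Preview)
Your proof is correct and follows C\'ordoba's scheme, but it is organised differently from the paper's argument. The paper works globally on $\mathbb{P}^{n-1}(N)$: it introduces a coordinate-free angle $\measuredangle(\omega,\omega')$ via $2\times 2$ minors, proves the intersection bound $|\ell_\omega\cap\ell_{\omega'}|\le N/\measuredangle(\omega,\omega')$ as a separate lemma (appealing to a general count of solutions to linear systems over $\Z/N\Z$), and then establishes a cap-counting estimate $|\{\omega':\measuredangle(\omega,\omega')=d\}|\le C_\varepsilon N^{n-2+\varepsilon}d$ as a further lemma. You instead pass to affine charts $V_i$ in which each direction has a canonical representative with a $1$ in a fixed slot; in these coordinates the intersection bound reduces to computing the annihilator of the ideal generated by $\tilde\omega-\tilde\omega'$, and the cap-count becomes simply $|\mathcal{B}_d|=d^{n-1}$. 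Your route is more elementary and self-contained, bypassing both auxiliary lemmas; the paper's route has the compensating advantage that the angle $\measuredangle$ and the associated intersection lemma are intrinsic (chart-independent) statements that parallel the euclidean geometry more visibly and may be reused elsewhere. One small cosmetic point: your final display bounds the squared $\ell^2$-norm by $C_\varepsilon N^{2n-2+\varepsilon}$ and then compares with the right-hand side via $\sum_\omega|\ell_\omega|\ge N^n$, whereas the paper keeps the factor $\sum_\omega|\ell_\omega|$ throughout; both are equivalent here.
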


At its heart, the proof of Proposition \ref{Cordoba proposition} (that is to say, C\'ordoba's argument) exploits the following simple geometric fact (here and below $\mathbb{F}$ denotes a field):

\begin{equation}\label{geometric fact 1}
\textrm{A pair of direction-separated lines in $\mathbb{F}^n$ can intersect in at most one point.}
\end{equation}

The relevance of \eqref{geometric fact 1} is most clearly understood by considering Proposition \ref{Cordoba proposition} in the finite field setting, given by restricting $N = p$ to vary over primes $p$. Indeed, in this case the argument is particularly elementary: by \eqref{geometric fact 1} one has
\begin{equation*}
\big\| \sum_{\omega \in \mathbb{P}^{n-1}(p)} \chi_{\ell_{\omega}} \big\|_{\ell^{2}([\Z/p\Z]^n)}^2 = \sum_{\omega, \omega' \in \mathbb{P}^{n-1}(p)} |\ell_{\omega} \cap \ell_{\omega'}| \leq |\mathbb{P}^{n-1}(p)|\big(|\mathbb{P}^{n-1}(p)| + p - 1\big),
\end{equation*}
and bounding the right-hand side of this inequality using \eqref{projective cardinality} yields the desired estimate. 

The original euclidean problem, as investigated in \cite{Cordoba1977}, studies configurations of $\delta$-tubes in $\R^n$ rather than lines. In this context one does not work with \eqref{geometric fact 1} \emph{per se}, but rather a quantitative version of this fact, which states that the measure of the intersection of two tubes is inversely proportional to the angle between their directions. In this respect, the $\Z/N\Z$ setting behaves much more like euclidean space than a vector space over a finite field. Indeed, owing to the presence of zero divisors, \eqref{geometric fact 1} can fail dramatically for lines over $\Z/N\Z$: a pair of direction-separated lines in $[\Z/N\Z]^n$ can meet at many points. However, in analogy with tubes in $\R^n$, the number of points of intersection is inversely proportional to the angle between the directions. 

To make the above discussion precise requires a notion of angle between elements of $\mathbb{P}^{n-1}(N)$; such a notion is formulated presently. Let $\vec{\omega} = (\omega_1, \dots, \omega_n)$, $\vec{\omega}' = (\omega_1', \dots, \omega_n') \in \mathbb{S}^{n-1}(N)$ be class representatives of $\omega, \omega' \in \mathbb{P}^{n-1}(N)$, respectively, and define the angle $\measuredangle(\omega, \omega')$ by
\begin{equation}\label{angle definition}
\measuredangle(\omega, \omega') := \max_{1 \leq i < j \leq n} \big| \det
\begin{pmatrix}
\omega_i  & \omega_j \\
\omega_i' & \omega_j'
\end{pmatrix}
\big|.
\end{equation}
Here $|\,\cdot\,|$ is the size function on $\Z/N\Z$ introduced at the beginning of $\S$\ref{basic setup section}: that is, $|x| := N/\gcd(x,N)$ for all $x \in \Z/N\Z$. Note that the right-hand side of \eqref{angle definition} does not depend on the choice of representatives $\vec{\omega}$ and $\vec{\omega}'$ and therefore $\measuredangle(\omega, \omega')$ is well-defined. A few further comments regarding the definition of $\measuredangle(\omega, \omega')$ are in order.
\begin{enumerate}[i)]
\item The definition \eqref{angle definition} is motivated by the formula $\sin \measuredangle(\omega, \omega') = |\omega \wedge \omega'|$ for the angle between unit vectors in $\R^n$. Note, in particular, that $|\omega \wedge \omega'|$ can be written in terms of the determinants 
\begin{equation*}
\det
\begin{pmatrix}
\omega_i  & \omega_j \\
\omega_i' & \omega_j'
\end{pmatrix} \qquad \textrm{for $1 \leq i < j \leq n$}
\end{equation*}
via the Cauchy--Binet formula.
\item By adapting an argument of Caruso \cite{Caruso}, one may easily show that
\begin{equation*}
\measuredangle(\omega, \omega') = \min_{(\vec{\omega},\vec{\omega}') \in \omega \times \omega' } \max_{1 \leq j \leq n} |\omega_j - \omega_j'|,
\end{equation*}
where the minimum is over all pairs of class representatives $\vec{\omega} = (\omega_1, \dots, \omega_n)$, $\vec{\omega}' = (\omega_1', \dots, \omega_n')$ for $\omega, \omega'$, respectively. 
\end{enumerate}

In place of the basic geometric fact \eqref{geometric fact 1} valid over fields, over $\Z/N\Z$ there is the following quantitative statement. 

\begin{lemma}\label{angle lemma} If $\omega, \omega' \in \mathbb{P}^{n-1}(N)$, then 
\begin{equation*}
|\ell_{\omega} \cap \ell_{\omega'}| \leq \frac{N}{\measuredangle(\omega, \omega')}.
\end{equation*}
\end{lemma}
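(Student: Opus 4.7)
The plan is to parametrize the intersection via one line and reduce to counting solutions of a linear congruence in $\Z/N\Z$. Write $\ell_\omega = \{t\vec\omega + \vec v : t \in \Z/N\Z\}$ and $\ell_{\omega'} = \{s\vec\omega' + \vec v' : s \in \Z/N\Z\}$ for fixed representatives $\vec\omega, \vec\omega' \in \mathbb{S}^{n-1}(N)$ and translates $\vec v, \vec v'$. Because $\vec\omega$ has at least one invertible component, the map $t \mapsto t\vec\omega + \vec v$ is injective, so $|\ell_\omega|=N$ and, more importantly, $|\ell_\omega \cap \ell_{\omega'}|$ equals the number of $t \in \Z/N\Z$ for which there exists $s$ with $t\vec\omega + \vec v = s\vec\omega' + \vec v'$. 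Thus it suffices to bound the number of such $t$.

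Fix any pair of indices $1 \le i < j \le n$. Projecting the vector equation onto coordinates $i$ and $j$ gives the linear system
\begin{equation*}
t\omega_i - s\omega_i' = v_i' - v_i, \qquad t\omega_j - s\omega_j' = v_j' - v_j.
\end{equation*}
Eliminating $s$ (multiply the first equation by $\omega_j'$, the second by $\omega_i'$, and subtract) yields the congruence
\begin{equation*}
t\, D_{ij} \equiv \omega_j'(v_i' - v_i) - \omega_i'(v_j' - v_j) \pmod{N}, \qquad D_{ij} := \det\begin{pmatrix}\omega_i & \omega_j \\ \omega_i' & \omega_j'\end{pmatrix}.
\end{equation*}
Any $t$ in the intersection set must satisfy this congruence, which has either no solution or exactly $\gcd(D_{ij},N) = N/|D_{ij}|$ solutions in $\Z/N\Z$.

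Now I would choose the pair $(i,j)$ that realises the maximum in the definition \eqref{angle definition}, so that $|D_{ij}| = \measuredangle(\omega,\omega')$. This gives at most $N/\measuredangle(\omega,\omega')$ admissible values of $t$, and hence the same bound on $|\ell_\omega \cap \ell_{\omega'}|$, which is exactly what the lemma asserts.

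I do not anticipate a serious obstacle: the argument is essentially Cramer's rule over $\Z/N\Z$, with the size function $|\cdot|$ packaging the gcd bookkeeping. The only small point requiring care is verifying that the bound is independent of the chosen representatives of $\omega,\omega'$ — but multiplying $\vec\omega$ or $\vec\omega'$ by a unit only multiplies each $D_{ij}$ by a unit, leaving $|D_{ij}|$ and hence $\measuredangle(\omega,\omega')$ unchanged, so the whole argument is well-defined on $\mathbb{P}^{n-1}(N)$.
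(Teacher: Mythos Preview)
Your proof is correct and follows essentially the same approach as the paper: both pick the $2\times 2$ minor of the direction matrix realising $\measuredangle(\omega,\omega')$ and bound the intersection by the number of solutions to the resulting linear congruence. The only cosmetic difference is that you eliminate $s$ by hand to reduce to a single congruence $tD_{ij}\equiv c\pmod N$, whereas the paper keeps both variables and invokes the appendix Lemma~\ref{linear system lemma} on the $2\times2$ system; your version is marginally more self-contained since it avoids the Smith normal form machinery, but the underlying idea is identical.
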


\begin{proof} If $\measuredangle(\omega, \omega') = 1$, then the result trivially holds and so one may assume that $\measuredangle(\omega, \omega') \succ 1$. Let $(\omega_1, \dots, \omega_n)$, $(\omega_1',\dots, \omega_n') \in \mathbb{S}^{n-1}(N)$ be class representatives for $\omega, \omega'$, respectively. It is easy to see that the cardinality of $\ell_{\omega} \cap \ell_{\omega'}$ is given by the number of solutions $(t,t') \in [\Z/N\Z]^2$ to the system $\begin{pmatrix}
t & t'
\end{pmatrix}
\cdot \Omega = \vec{v}$ for some $\vec{v} \in [\Z/N\Z]^n$, where
\begin{equation*}
\Omega :=
\begin{pmatrix}
\omega_1  & \dots & \omega_n  \\
\omega_1' & \dots & \omega_n' 
\end{pmatrix} .
\end{equation*}
By definition, there exists some $2 \times 2$ submatrix $A$ of $\Omega$ such that $|\det A| = \measuredangle(\omega, \omega')$. Since $\measuredangle(\omega, \omega') \succ 1$, Lemma \ref{linear system lemma} of the appendix implies that for any $\vec{b} \in [\Z/N\Z]^2$ the system
\begin{equation*}
\begin{pmatrix}
t & t'
\end{pmatrix}
\cdot A = \vec{b}
\end{equation*}
has at most $N/|\det A|$ solutions, and the desired result follows. 
\end{proof}

Given this inequality, it is a simple exercise to translate C\'ordoba's approach \cite{Cordoba1977} into the current setting (see also \cites{Caruso, Dummit2013}) and thereby prove Proposition \ref{Cordoba proposition}. 

\begin{proof}[Proof (of Proposition \ref{Cordoba proposition})] Expanding the left-hand $\ell^2$-norm, one obtains
\begin{align*}
\big\| \sum_{\omega \in \mathbb{P}^{n-1}(N)} \chi_{\ell_{\omega}} \big\|_{\ell^{2}([\Z/N\Z]^n)}^2 &= \sum_{\omega \in \mathbb{P}^{n-1}(N)} \sum_{d \mid N} \sum_{\substack{\omega' \in \mathbb{P}^{n-1}(N) \\ \measuredangle(\omega, \omega') = d}} | \ell_{\omega} \cap \ell_{\omega'} |.
\end{align*}
By Lemma \ref{angle lemma} and \eqref{line cardinality}, it follows that
\begin{equation*}
\big\| \sum_{\omega \in \mathbb{P}^{n-1}(N)} \chi_{\ell_{\omega}} \big\|_{\ell^{2}([\Z/N\Z]^n)}^2 \leq \sum_{\omega \in \mathbb{P}^{n-1}(N)} |\ell_{\omega}| \sum_{d \mid N} \frac{|\{\omega' \in \mathbb{P}^{n-1}(N) : \measuredangle(\omega, \omega') = d\}|}{d}.
\end{equation*}
Recalling the standard asymptotics for the divisor function (see, for example, \cite{Hardy2008}*{Chapter XVIII}), it suffices to show that for all $\varepsilon > 0$ there exists a constant $C_{\varepsilon} >0$ such that 
\begin{equation*}
|\{\omega' \in \mathbb{P}^{n-1}(N) : \measuredangle(\omega, \omega') = d\}| \leq C_{\varepsilon} N^{ n-2 + \varepsilon}d.
\end{equation*}
The simple proof of this inequality is postponed until the end of the section.
\end{proof}

\subsubsection*{The slicing argument} Although C\`ordoba's argument is effective for $n=2$, it produces very poor estimates in higher dimensions. Here a $\Z/N\Z$-analogue of the elementary $(n+1)/2$-dimensional lower bound for Kakeya sets is established. This gives improved partial results towards Conjecture \ref{Kakeya conjecture} in higher dimensions. 

\begin{proposition}\label{slicing proposition} For all $\varepsilon > 0$ there exists a constant $c_{\varepsilon,n} > 0$ such that the density bound 
\begin{equation*}
\frac{|K|}{N^{n}} \geq c_{\varepsilon,n} N^{-(n-1)/2 - \varepsilon}
\end{equation*}
holds for any Kakeya set $K \subseteq [\Z/N\Z]^n$. 
\end{proposition}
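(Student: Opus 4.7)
The plan is to adapt the classical slicing argument yielding the $(n{+}1)/2$-dimensional lower bound for euclidean Kakeya sets to $[\Z/N\Z]^n$. The sole substantive obstruction is that a line in $[\Z/N\Z]^n$ through two prescribed points is uniquely determined only when the displacement has a unit coordinate; restricting attention to pairs of parallel slices whose separation lies in $\mathbb{S}^0(N)$ resolves this, at the price of a $\phi(N)/N$ factor which is absorbed into the claimed $N^{-\varepsilon}$ loss via the standard totient estimate $\phi(N) \geq c_\varepsilon N^{1-\varepsilon}$ (as in Example \ref{integral example}).

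The first step would be to isolate a privileged family of $N^{n-1}$ directions: those $\omega \in \mathbb{P}^{n-1}(N)$ admitting a representative of the form $\vec{\omega} = (\omega_1,\dots,\omega_{n-1},1)$. For each such $\omega$ the Kakeya hypothesis furnishes a line $\ell_\omega = \{\vec{v}_\omega + t\vec{\omega} : t \in \Z/N\Z\} \subseteq K$; since the last coordinate of $\vec{\omega}$ is a unit, $\ell_\omega$ meets each horizontal hyperplane $H_a := \{\vec{x} \in [\Z/N\Z]^n : x_n = a\}$ in exactly one point $p_\omega^a \in K_a := K \cap H_a$, and the identity $p_\omega^b - p_\omega^a = (b-a)\vec{\omega}$ drives what follows. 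The key injectivity claim is then: whenever $a - b \in \mathbb{S}^0(N)$, the map $\omega \mapsto (p_\omega^a, p_\omega^b)$ is injective into $K_a \times K_b$, because from the pair one may solve $\vec{\omega} = (b-a)^{-1}(p_\omega^b - p_\omega^a)$ in $[\Z/N\Z]^n$, and the normalisation of the last coordinate then pins down $\omega$ as a projective class. Consequently $|K_a| \cdot |K_b| \geq N^{n-1}$ for every such pair.

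To conclude, I would sum this pointwise estimate over all admissible slice pairs. Using $|K| = \sum_{a \in \Z/N\Z} |K_a|$,
\begin{equation*}
|K|^2 \;=\; \sum_{a, b \in \Z/N\Z} |K_a|\,|K_b| \;\geq\; \sum_{\substack{a, b \in \Z/N\Z\\ a-b \in \mathbb{S}^0(N)}} |K_a|\,|K_b| \;\geq\; N^{n-1} \cdot N\phi(N),
\end{equation*}
since there are $N\,|\mathbb{S}^0(N)| = N\phi(N)$ ordered pairs with $a - b$ a unit. Combined with $\phi(N) \geq c_\varepsilon N^{1-2\varepsilon}$ this produces $|K| \geq c_\varepsilon^{1/2} N^{(n+1)/2 - \varepsilon}$ and hence the desired density bound after dividing by $N^n$. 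The step I expect to be most delicate is handling the injectivity cleanly in the presence of zero divisors, but the unit-separation requirement on $a - b$ reduces it to the one-line algebraic manipulation above; the rest is bookkeeping.
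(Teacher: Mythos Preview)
Your proof is correct and reaches the same conclusion, but by a genuinely different route from the paper's. Both arguments rest on the same injectivity observation: for directions with a unit in a fixed coordinate, the pair of intersection points with two unit-separated slices determines the direction. The paper, however, does not sum over all slice pairs. Instead it first applies a Chebyshev argument to the slice sizes $|K[t]|$ to produce a large set $E$ of ``light'' slices with $|K[t]| \leq (\bar C \log N / N)\,|K|$, and then invokes Mertens' theorem to guarantee that $E$ is large enough to contain two elements at unit distance; applying the injectivity to this \emph{single} pair yields $N^{n-1} \leq |K[0]|\,|K[1]| \leq (\bar C \log N / N)^2 |K|^2$, hence the bound with only a $\log N$ loss. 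Your averaging trick $|K|^2 = \sum_{a,b} |K_a|\,|K_b| \geq N\phi(N)\,N^{n-1}$ is cleaner and avoids both the Chebyshev and Mertens steps entirely, at the cost of trading the $\log N$ loss for an $N^{\varepsilon}$ loss coming from the totient bound $\phi(N) \geq c_\varepsilon N^{1-\varepsilon}$; since the proposition only claims an $N^{-\varepsilon}$ bound anyway, nothing is lost. In short: the paper pigeonholes to a single good pair of slices; you average over all admissible pairs.
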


In essence, the proof of Proposition \ref{slicing proposition} relies on the following variant of the key geometric fact \eqref{geometric fact 1} used above:

\begin{equation*}
    \textrm{For $x, y \in \mathbb{F}^n$ distinct there exists precisely one line in $\mathbb{F}^n$ passing through both $x$ and $y$.}
\end{equation*}

Once again, owing to the presence of zero divisors, this property no longer holds over $\Z/N\Z$. In its place there is the following quantitative version, where the separation between the points is quantified.  

\begin{lemma}\label{separated points lemma} Let $\ell_{\omega}, \ell_{\omega'}$ be lines in $[\Z/N\Z]^n$ in the directions $\omega, \omega' \in \mathbb{P}^{n-1}(N)$, respectively. If there exist points $\vec{x}, \vec{y} \in \ell_{\omega} \cap \ell_{\omega'}$ such that $\|\vec{x} - \vec{y}\| = N$, then $\ell_{\omega} = \ell_{\omega'}$. 
\end{lemma}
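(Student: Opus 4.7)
The plan is to parametrize both lines through the common pair $\vec{x}, \vec{y}$ and translate the nondegeneracy condition $\|\vec{x}-\vec{y}\| = N$ into the statement that the resulting scalar parameter is a unit in $\Z/N\Z$. Specifically, fix representatives $\vec{\omega}, \vec{\omega}' \in \mathbb{S}^{n-1}(N)$ of $\omega, \omega'$. Since $\vec{x}, \vec{y} \in \ell_{\omega}$, there exist scalars $t_1, t_2 \in \Z/N\Z$ and a basepoint $\vec{v} \in [\Z/N\Z]^n$ with $\vec{x} = t_1 \vec{\omega} + \vec{v}$ and $\vec{y} = t_2 \vec{\omega} + \vec{v}$, and similarly scalars $s_1, s_2$ from the parametrization of $\ell_{\omega'}$. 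Setting $u := t_1 - t_2$ and $v := s_1 - s_2$ gives
\begin{equation*}
\vec{x} - \vec{y} \ = \ u\vec{\omega} \ = \ v\vec{\omega}'.
\end{equation*}

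The key step is to observe that $u$ is necessarily a unit of $\Z/N\Z$. Indeed, by definition $\|\vec{x}-\vec{y}\| = N$ precisely when $\gcd$ of the components of $\vec{x}-\vec{y}$ together with $N$ equals $1$. Since every component of $\vec{x}-\vec{y}$ is of the form $u\,\omega_i$, any common divisor of $u$ and $N$ divides each such component, and therefore divides the $\gcd$ of those components with $N$; hence $\gcd(u, N) = 1$, so $u \in (\Z/N\Z)^{\times}$. The same reasoning applied to the representation $\vec{x}-\vec{y} = v\vec{\omega}'$ shows $v$ is also a unit.

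Having extracted this, the rest is immediate. From $u\vec{\omega} = v\vec{\omega}'$ one obtains $\vec{\omega}' = (v^{-1}u)\vec{\omega}$, so $\vec{\omega}$ and $\vec{\omega}'$ lie in the same $\mathbb{S}^0(N)$-orbit, i.e., $\omega = \omega'$ as elements of $\mathbb{P}^{n-1}(N)$. Since $\vec{x}$ belongs to both $\ell_{\omega}$ and $\ell_{\omega'}$ and the two lines have the same direction, their parametrizations can be synchronised so that $\ell_{\omega} = \ell_{\omega'}$.

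There is no serious obstacle here: the content of the lemma is entirely captured by the observation that the hypothesis $\|\vec{x}-\vec{y}\| = N$ is engineered exactly to avoid the zero-divisor phenomena that forced the quantitative bookkeeping of Lemma \ref{angle lemma}. Once one sees that this hypothesis promotes the scalar $u$ to a unit, the conclusion follows by elementary linear algebra over $\Z/N\Z$.
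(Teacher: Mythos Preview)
Your argument is correct and complete. The paper itself does not supply a proof of this lemma; it simply writes ``The simple proof is left to the reader,'' so there is nothing to compare against beyond confirming that your approach is the natural one the authors had in mind. Your key observation --- that $\|\vec{x}-\vec{y}\| = N$ forces the scalar $u = t_1 - t_2$ (and likewise $v$) to be a unit, since any common factor of $u$ and $N$ would divide every component $u\omega_i$ of $\vec{x}-\vec{y}$ --- is exactly the point, and the remainder follows immediately.
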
  

\begin{proof} The simple proof is left to the reader.
\end{proof}

Proposition \ref{slicing proposition} is proved by combining Lemma \ref{separated points lemma} with (an adaptation of) a simple and well-known slicing argument from euclidean analysis (see, for instance, \cite{Katz2002}). 

\begin{proof} Suppose $K \subseteq [\Z/N\Z]^n$ is a Kakeya set and let
\begin{equation*}
K[t] := K \cap \{ \vec{x} \in [\Z/N\Z]^n : x_1 = t\} \quad \textrm{for all $t \in \Z/N\Z$.}
\end{equation*}
Let $\bar{C}> 0$ be a uniform constant, to be determined later in the proof, and define 
\begin{equation*}
E := \Big\{ t \in \Z / N\Z: |K[t]| \leq \frac{\bar{C}\log N}{N}|K| \Big\}.
\end{equation*}
It follows from Chebyshev's inequality that 
\begin{equation}\label{slicing 1}
|E| \geq (1 - 1/\bar{C}\log N)N.
\end{equation} 
If $\bar{C}$ is chosen to be sufficiently large, then, by pigeonholing, $E$ will necessarily contain a pair of well-separated points. Indeed, without loss of generality (by translating the Kakeya set) one may suppose that $\vec{0}\in E$ and, recalling \eqref{sphere cardinality}, it follows that
\begin{equation*}
|\{t \in \Z/N\Z : |t| < N \}| = N - |\mathbb{S}^0(N)| = N \cdot \big( 1 - \prod_{p \mid N \,\mathrm{prime}} (1 - 1/p) \big).
\end{equation*}
By Mertens' theorem (see, for instance, \cite{Hardy2008}*{Chapter XXII}), the constant $\bar{C} > 0$ may be chosen so that
\begin{equation*}
\prod_{p \mid N\, \mathrm{prime}} (1 - 1/p) \geq \prod_{\substack{2 \leq p \leq N \\ p\, \mathrm{prime}}} (1- 1/p) \geq \frac{2}{\bar{C}\log N} .
\end{equation*}
Combining these observations with \eqref{slicing 1}, one concludes that 
\begin{equation*}
|\{t \in \Z/ N\Z : |t| < N \}| \leq N(1 - 2/\bar{C}\log N) < |E|,
\end{equation*}
and so there must exist an element $t \in E$ with $|t| = N$. By applying a group automorphism to the set $K$ one may further assume that $t = 1$.   

Let $\Omega(N) \subseteq \mathbb{P}^{n-1}(N)$ denote the set of all $\omega \in \mathbb{P}^{n-1}(N)$ for which the first component of some (and therefore every) class representative $\vec{\omega}\in \mathbb{S}^{n-1}(N)$ lies in $\mathbb{S}^0(N)$. Thus, $\Omega(N)$ is the collection of orbits of the free action of $\mathbb{S}^0(N)$ on $\mathbb{S}^0(N) \times [\Z/N\Z]^{n-1}$ and therefore has cardinality $N^{n-1}$. Furthermore, for any $\omega \in \Omega(N)$ the line $\ell_{\omega}$ intersects each of the slices $K[0]$ and $K[1]$ at a unique point, denoted by $\ell_{\omega}[0]$ and $\ell_{\omega}[1]$, respectively. On the other hand, Lemma \ref{separated points lemma} implies that for any pair of points $(\vec{x}\,, \vec{y}\,) \in K[0] \times K[1]$ there exists at most one line $\ell_{\omega}$ such that $\ell_{\omega}[0] = \vec{x}$ and $\ell_{\omega}[1] = \vec{y}$. Consequently, 
\begin{equation*}
N^{n-1} = |\Omega(N)| \leq |K[0]||K[1]| \leq \frac{\bar{C}^2\log^2N}{N^2} |K|^2,
\end{equation*}
which implies the desired inequality. 
\end{proof}




\subsection{Remaining estimates and identities} The proofs of a small number of basic estimates and identities were not presented in the above text; these remaining issues are collected in the following lemma and addressed presently. 

\begin{lemma}\label{left over lemma} For all $N \in \N$ the following statements hold.
\begin{enumerate}[i)] 
\item\label{projective cardinality lemma} The cardinalities of $\mathbb{S}^0(N)$ and $\mathbb{P}^{n-1}(N)$ are given by the formulae
\begin{equation*}
|\mathbb{S}^0(N)| = N \prod_{p \mid N\, \mathrm{prime}} (1 - 1/p) \quad \textrm{and} \quad |\mathbb{P}^{n-1}(N)| =  N^{n-1}\prod_{p \mid N\, \mathrm{prime}} \sum_{j=0}^{n-1} p^{-j}.
\end{equation*}
\item\label{projective cap lemma} For all $\varepsilon > 0$ there exists a constant $C_{\varepsilon} >0$ such that 
\begin{equation*}
|\{\omega' \in \mathbb{P}^{n-1}(N) : \measuredangle(\omega, \omega') = d\}| \leq C_{\varepsilon} N^{n-2 - \varepsilon}d
\end{equation*}
holds for all $\omega \in \mathbb{P}^{n-1}(N)$ and all $d \mid N$.
\end{enumerate}
\end{lemma}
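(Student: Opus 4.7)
The identity $|\mathbb{S}^0(N)| = \phi(N) = N\prod_{p\mid N}(1 - 1/p)$ is the classical formula for Euler's totient. For the projective space, I would first check that the action of $\mathbb{S}^0(N)$ on $\mathbb{S}^{n-1}(N)$ is free: if $u\vec{x} = \vec{x}$ and $x_j$ is a unit, then $(u-1)x_j = 0$ forces $u = 1$. Hence $|\mathbb{P}^{n-1}(N)| = |\mathbb{S}^{n-1}(N)|/|\mathbb{S}^0(N)|$. It remains to compute $|\mathbb{S}^{n-1}(N)|$: via the Chinese Remainder Theorem this reduces to the prime power case $N = p^\alpha$, where the complement of $\mathbb{S}^{n-1}$ in $(\Z/p^\alpha\Z)^n$ consists of tuples with every coordinate in the maximal ideal, yielding $|\mathbb{S}^{n-1}(p^\alpha)| = p^{n\alpha} - p^{n(\alpha-1)}$. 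Multiplying over primes and dividing by $|\mathbb{S}^0(N)|$ produces the stated product formula after a short rearrangement.

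\textbf{Part (ii).} My plan is to reduce to prime powers by CRT and then count by an explicit parametrisation. Since $|\,\cdot\,|$ is multiplicative under the isomorphism $\Z/N\Z \cong \prod_i \Z/p_i^{\alpha_i}\Z$, the condition $\measuredangle(\omega,\omega') \preceq d$ splits into the local conditions $\measuredangle(\omega^{(i)},\omega'^{(i)}) \preceq p_i^{\beta_i}$ (where $d = \prod p_i^{\beta_i}$), so the count factorises: $|V(\omega;\preceq d)| = \prod_i |V(\omega^{(i)};\preceq p_i^{\beta_i})|$, writing $V(\omega;\preceq d) := \{\omega' : \measuredangle(\omega,\omega') \preceq d\}$.

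For the local count, fix $N = p^\alpha$, $d = p^\beta$, and, after permuting coordinates, take a representative $\vec{\omega}$ of $\omega$ with $\omega_1 = 1$. Since $\omega_1$ is a unit, the full $2\times 2$ minor system defining the angle condition reduces to the $i = 1$ congruences $\omega_j' \equiv \omega_1'\omega_j \pmod{p^{\alpha-\beta}}$ for $j \ge 2$, which I would re-express as a parametrisation $\vec{\omega}' = t\vec{\omega} + \vec{r}$ with $t \in \Z/p^\alpha\Z$ and $\vec{r} \in (p^{\alpha-\beta}\Z/p^\alpha\Z)^n$. Two such pairs $(t,\vec{r})$ and $(t+s,\vec{r}-s\vec{\omega})$ yield the same $\vec{\omega}'$ iff $s \in (p^{\alpha-\beta})$, so the map is $p^\beta$-to-$1$. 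When $\beta < \alpha$, one verifies that $\vec{\omega}' \in \mathbb{S}^{n-1}$ iff $t$ is a unit, since $\omega_j' \equiv t\omega_j \pmod p$ for every $j$. This gives $|V(\omega;\preceq p^\beta)| = \phi(p^\alpha)\,p^{n\beta}/(p^\beta\,|\mathbb{S}^0(p^\alpha)|) = p^{(n-1)\beta}$; the endpoint $\beta = \alpha$ is handled directly by part (i), which yields $|\mathbb{P}^{n-1}(p^\alpha)| \leq 2\, p^{(n-1)\alpha}$ on account of the elementary bound $1 + 1/p + \cdots + 1/p^{n-1} \leq p/(p-1) \leq 2$.

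Combining the local bounds via CRT yields $|V(\omega;\preceq d)| \leq 2^{k(N)} d^{n-1}$, where $k(N)$ denotes the number of distinct prime factors of $N$. Since $2^{k(N)} \leq \tau(N) = O_\varepsilon(N^\varepsilon)$ by the standard asymptotics for the divisor function already invoked in the proof of Proposition \ref{Cordoba proposition}, and $d^{n-1} \leq d\, N^{n-2}$ trivially, the required estimate follows (with the sign of the $\varepsilon$-exponent in the statement understood as $+\varepsilon$, matching the usage in Proposition \ref{Cordoba proposition}). The main obstacle is the bookkeeping in the parametrisation, particularly verifying the $p^\beta$-to-$1$ identification and the description of $\mathbb{S}^{n-1}$-membership in terms of the unit status of $t$, together with the separate treatment of the endpoint $\beta = \alpha$.
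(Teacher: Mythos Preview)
Your treatment of part (i) is essentially the paper's: both compute $|\mathbb{P}^{n-1}(N)|$ as $|\mathbb{S}^{n-1}(N)|/|\mathbb{S}^0(N)|$ via freeness of the action and reduce to prime powers. (Both the paper and you tacitly assume $|\mathbb{S}^{n-1}(\cdot)|$ or $|\mathbb{P}^{n-1}(\cdot)|$ is multiplicative in $N$; under the paper's literal definition --- ``at least one invertible component'' --- this is actually false for composite $N$, e.g.\ $|\mathbb{P}^1(6)| = 10 \neq 3\cdot 4$. The intended definition is presumably the standard one with $\gcd(\omega_1,\dots,\omega_n,N)=1$, under which everything works.)

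For part (ii) you take a genuinely different route from the paper, and there is a gap in the CRT step. Your prime--power count is correct and in fact sharper than the paper's crude bound, but the assertion that the condition $\measuredangle(\omega,\omega') \preceq d$ ``splits into the local conditions $\measuredangle(\omega^{(i)},\omega'^{(i)}) \preceq p_i^{\beta_i}$'' is false. The angle is a \emph{maximum} of minor--sizes in the archimedean (integer) ordering, and this maximum is not multiplicative under CRT. Concretely, take $N=6$, $n=3$, $\vec{\omega}=(1,0,0)$, $\vec{\omega}'=(1,2,3)$: the three minor--sizes are $|2|_6=3$, $|3|_6=2$, $|0|_6=1$, so $\measuredangle(\omega,\omega')=3=d$; but reducing mod $2$ gives $\vec{\omega}'^{(1)}=(1,0,1)$ with local angle $2$, which does \emph{not} satisfy $\preceq 2^0=1$. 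Thus $\omega'$ lies in your global set $V(\omega;\preceq d)$ but its $2$--component is outside the local set, so $|V(\omega;\preceq d)|$ is not bounded by the product of the local counts. What you are really computing via CRT is $|W(\omega;d)|$ where $W = \{\omega' : |\det M_{ij}| \mid d \text{ for every } i<j\}$; this set does factorise, and for prime powers it agrees with $V$, but globally one does not have $\{\measuredangle=d\}\subseteq W(\omega;d)$.

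By contrast, the paper avoids CRT entirely: after normalising $\omega_1=1$ it shows (for $d$ a proper divisor) that any $\vec{\omega}'\in S^{n-1}(\omega;d)$ must have $\omega_1'$ a unit, and then simply throws away all but the $(1,2)$ minor to get $S^{n-1}(\omega;d)\subseteq\{\vec{\omega}'\in\mathbb{S}^0(N)\times[\Z/N\Z]^{n-1}: |\omega_2'-\omega_1'\omega_2|\leq d\}$; the count of admissible $\omega_2'$ is then $|\bigcup_{d'\mid N,\,d'\leq d}\mathcal{B}_{d'}| \leq C_\varepsilon N^\varepsilon d$ by the divisor bound. Your parametrisation $\vec{\omega}'=t\vec{\omega}+\vec{r}$ can be made to work globally if you replace the divisibility set $V(\omega;\preceq d)$ by the archimedean superset where each $(1,j)$ minor satisfies $|\omega_j'-\omega_1'\omega_j|\leq d$ and sum over the divisors $d'\leq d$ --- at which point you have essentially recovered the paper's argument.
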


\begin{proof}[Proof of Lemma \ref{left over lemma} \ref{projective cardinality lemma})] The cardinality of the group of units is well-known, but nevertheless a proof is included in order to express the argument in terms of the $|\,\cdot\,|$, $\preceq$ notation introduced in this article. Observe that 
\begin{equation*}
\{t \in \Z/ N\Z : |t| \prec N \} = \bigcup_{p \mid N\, \mathrm{prime}} \mathcal{B}_{N/p}  
\end{equation*}
and, by the inclusion-exclusion principle,
\begin{align*}
\big|\bigcup_{p \mid N\, \mathrm{prime}} \mathcal{B}_{N/p} \big| &= \sum_{k=1}^{\omega(N)} (-1)^{k+1} \sum_{\substack{p_1 < \dots < p_k \\ p_j \mid N \, \mathrm{prime} }} \big| \bigcap_{j=1}^{k} \mathcal{B}_{N/p_{j}}\big| \\
&= \sum_{k=1}^{\omega(N)} (-1)^{k+1} \sum_{\substack{p_1 < \dots < p_k \\ p_j \mid N \, \mathrm{prime} }} \big|\mathcal{B}_{N/p_{1}\dots p_{k}}\big| \\
&= N\cdot\big(1 - \prod_{p \mid N\, \mathrm{prime}} (1 - 1/p)\big),
\end{align*}
where $\omega(N)$ is the number of distinct prime factors of $N$. The desired identity for $|\mathbb{S}^0(N)|$ immediately follows. 

Since $|\mathbb{P}^{n-1}(N)|$ is a multiplicative function of $N$, it suffices to establish the second formula for $N = p^{\alpha}$ a power of a prime $p$; in this case, the desired result was observed by Caruso \cite{Caruso}. A slightly different argument to that of \cite{Caruso} is as follows. Since the action of $\mathbb{S}^0(N)$ on $\mathbb{S}^{n-1}(N)$ is free, one deduces that
\begin{equation*}
|\mathbb{P}^{n-1}(N)| = \frac{|\mathbb{S}^{n-1}(N)|}{|\mathbb{S}^0(N)|} = \frac{N^n - (N-|\mathbb{S}^0(N)|)^n}{|\mathbb{S}^0(N)|}
\end{equation*}
and, after a short computation, the result follows from the above formula for $|\mathbb{S}^0(N)|$. 
\end{proof}

\begin{proof}[Proof of Lemma \ref{left over lemma} \ref{projective cap lemma})] The argument here is rather crude and more precise estimates could be obtained (see \cite{Caruso}). Nevertheless, the resulting bounds suffice for the purposes of this article. 

Fix $\omega \in \mathbb{P}^{n-1}(N)$ and $d \mid N$ and define
\begin{equation*}
P^{n-1}(\omega; d) := \{\omega' \in \mathbb{P}^{n-1}(N) : \measuredangle(\omega, \omega') = d\},
\end{equation*}
so that the desired estimate reads
\begin{equation*}
|P^{n-1}(\omega; d)| \leq C_{\varepsilon} N^{n-2 + \varepsilon}d.
\end{equation*}
The group of units $\mathbb{S}^0(N)$ acts freely on 
\begin{equation*}
S^{n-1}(\omega ;d) := \Big\{ (\omega_1', \dots, \omega_n') \in \mathbb{S}^{n-1}(N) : \max_{1 \leq i < j \leq n} \big| \det
\begin{pmatrix}
\omega_i  & \omega_j \\
\omega_i' & \omega_j'
\end{pmatrix}
\big| = d \Big\},
\end{equation*}
and it follows that
\begin{equation}\label{projective cap 1}
|P^{n-1}(\omega; d)| = \frac{|S^{n-1}(\omega; d)|}{|\mathbb{S}^0(N)|}.
\end{equation}

If $d = N$, then one may use the trivial estimate 
\begin{equation*}
|S^{n-1}(\omega; d)| \leq |\mathbb{S}^{n-1}(N)| \leq nN^{n-1}|\mathbb{S}^0(N)|
\end{equation*}
which, combined with \eqref{projective cap 1}, yields the desired bound. 

Now suppose $d \mid N$ is a proper divisor and fix a representative $(\omega_1, \dots, \omega_n) \in \mathbb{S}^{n-1}(N)$ of $\omega$. Without loss of generality, one may assume that $\omega_{1} \in \mathbb{S}^0(N)$ and, by possibly choosing an alternative class representative, moreover, that $\omega_1 = 1$. If $\vec{\omega}' = (\omega_1', \dots, \omega_n') \in S^{n-1}(\omega; d)$, then it follows that $\omega_1' \in \mathbb{S}^0(N)$. Indeed, otherwise $|\omega_1'| \prec N$ and there must exist some $2 \leq j \leq n$ such that $\omega_j' \in \mathbb{S}^0(N)$; in this case
\begin{equation*}
\big| \det
\begin{pmatrix}
1  & \omega_j \\
\omega_1' & \omega_j'
\end{pmatrix}
\big| = N,
\end{equation*}
contradicting the assumption that $\vec{\omega}' \in S^{n-1}(\omega; d)$. Thus, one deduces that
\begin{equation*}
S^{n-1}(\omega; d) \subseteq \{ \vec{\omega}' \in \mathbb{S}^0(N) \times [\Z/N\Z]^{n-1} : \big| \det
\begin{pmatrix}
1  & \omega_2 \\
\omega_1' & \omega_2'
\end{pmatrix}
\big| \leq d \Big\}.
\end{equation*}
For any fixed $\omega_1' \in \mathbb{S}^0(N)$ and $\varepsilon >0$ there exists some $C_{\varepsilon} >0$ such that 
\begin{equation*}
|\big\{ \omega_2' \in \Z/N\Z : |\omega_2' - \omega_1'\omega_2| \leq d \}| = \big| \bigcup_{d' \mid N : d' \leq d} \mathcal{B}_{d'} \big| \leq \sum_{d' \mid N : d' \leq d} d' \leq C_{\varepsilon} N^{\varepsilon} d.
\end{equation*}
Consequently, 
\begin{equation*}
|S^{n-1}(\omega; d) | \leq C_{\varepsilon} N^{n-2+\varepsilon} d|\mathbb{S}^0(N)|
\end{equation*}
and combining this inequality with \eqref{projective cap 1} concludes the proof.
\end{proof}




\section{Fourier restriction over \texorpdfstring{$\Z/p^{\alpha}\Z$}{ZpZ} and \texorpdfstring{$\Q_p$}{Qp}}\label{p adic section}




\subsection{Analysis over the \texorpdfstring{$p$}{p}-adic field} In this section the key features of the $\Z/p^{\alpha}\Z$ formulation of the restriction problem (that is, Problem \ref{p power problem}) are described. In particular, a correspondence principle is demonstrated that allows one to lift the analysis from the finite rings $\Z/p^{\alpha}\Z$ to the field of $p$-adic numbers $\Q_p$. This correspondence helps to explain many of the apparent similarities between the $\Z/p^{\alpha}\Z$ and euclidean theories, since the fields $\Q_p$ and $\R$ are in many respects related. Furthermore, there are a number of euclidean-based techniques which have no obvious counterpart or are considerably more difficult to implement in the discrete setting, but can be easily adapted to work over $\Q_p$. Thus, lifting the problem to the $p$-adics often significantly simplifies the analysis (a striking example of this occurs when one studies the restriction theory for the moment curve; this is described in detail in $\S$\ref{Fourier restriction for curves section}). The $p$-adic field also has a relatively simple algebraic structure since, in particular, there are no zero divisors.

Before proceeding some basic facts regarding analysis over $\Q_p$ are reviewed, and the relevant notational conventions are established. Fixing a prime $p$, recall that the $p$-adic absolute value $|\,\cdot\,|_p \colon \Z \to \{0, p^{-1}, p^{-2}, \dots\}$ is defined by 
\begin{equation*}
|x|_p := \left\{ \begin{array}{ll}
p^{-k} & \textrm{if $x \neq 0$ and $p^{k}\,\| x$ for $k \in \N_0$} \\
0 & \textrm{otherwise}
\end{array} \right. ,
\end{equation*}
where the notation $p^k \,\| \theta$ is used to denote that $p^k$ divides $\theta$ (that is, $p^k \mid \theta$) and no larger power of $p$ divides $\theta$. The function $|\,\cdot\,|_p$ uniquely extends to a non-archimedean absolute value on the rationals $\Q$.\footnote{That is, $|\,\cdot\,|_p \colon \Q \to [0, \infty)$ satisfies the following properties:
\begin{enumerate}[i)]
\item (Positive definite) $|x|_p \geq 0$ for all $x \in \Q$ and $|x|_p = 0$ if and only if $x=0$;
\item (Multiplicative) $|xy|_p = |x|_p|y|_p$ for all $x, y \in \Q$;
\item (Strong triangle inequality) $|x+y|_p \leq \max\{|x|_p, |y|_p\}$ for all $x, y \in \Q$. 
\end{enumerate}} The field of $p$-adic numbers $\Q_p$ is defined to be the metric completion of $\Q$ under the metric induced by $|\,\cdot\,|_p$. One may verify that $\Q_p$ indeed has a natural field structure and contains $\Q$ as a subfield. 

Any element $x \in \Q_p\setminus\{0\}$ admits a unique $p$-adic series expansion 
\begin{equation}\label{p adic expansion}
x = \sum_{j=J}^{\infty} x_j p^j
\end{equation}  
where $J \in \Z$, $x_j \in \{0,1, \dots, p-1\}$ for all $j \in \Z$ with $x_J \neq 0$ (and $x_j := 0$ for $j < J$). The sum is understood as the limit of a sequence of rationals, where the convergence is with respect to the $p$-adic absolute value. In this case, $|x|_p = p^{-J}$. The ring of $p$-adic numbers $\Z_p$ is defined to be the set comprised of 0 together with all the elements $x \in \Q_p \setminus\{0\}$ for which $J \geq 0$ in the expansion \eqref{p adic expansion}. Thus, $\Z_p = \{x \in \Q_p : |x|_p \leq 1\}$, and this clearly forms a subring of $\Q_p$ by the multiplicative property of the absolute value.

The field $\Q_p$ is a locally compact abelian group under the addition operation and the Haar measure of a Borel subset $E \subseteq \Q_p$ is denoted by $|E|$; this measure is normalised so that $|\Z_p| = 1$. The notation $\ud x$ is used to indicate that an integral is taken with respect to Haar measure (hence, $|E| = \int_{\Q_p} \chi_E(x)\,\ud x$ for all $E \subseteq \Q_p$ Borel). For any $r > 0$ and $x \in \Q_p$ the ball $B_r(x)$ is defined by
\begin{equation*}
B_r(x) := \{y \in \Q_p : |x - y|_p \leq r\};
\end{equation*}
these balls are \emph{not} defined using a strict inequality so that, for instance, $\Z_p = B_1(0)$. For each $\alpha \in \Z$ the ball $B_{p^{\alpha}}(0)  = p^{-\alpha}\Z_p$ is an additive subgroup of $\Q_p$ (furthermore, if $\alpha \leq 0$, then $B_{p^{\alpha}}(0)$ is an ideal of $\Z_p$), and all other balls of radius $p^{\alpha}$ arise as cosets of $B_{p^{\alpha}}(0)$. It immediately follows from the translation invariance property of the Haar measure (together with the choice of normalisation) that $|B_{p^{\alpha}}(x)| = p^{\alpha}$ for all $\alpha \in \Z$ and $x \in \Q_p$.  

There is an alternative algebraic description of $\Z_p$ as the inverse limit of the inverse system of groups $(\Z/p^{\alpha}\Z)_{\alpha \in \N}$: that is, 
\begin{equation*}
\Z_p = \varprojlim_{\alpha \in \N} \Z/p^{\alpha}\Z.
\end{equation*}
The $p$-adic numbers $\Q_p$ can then be described algebraically as the field of fractions of $\Z_p$. This perspective will not feature heavily here, but it is noted that that the inverse system induces a family of natural projection homomorphisms $\pi_{\alpha} \colon \Z_p \to \Z/p^{\alpha}\Z$. The $\pi_{\alpha}$ are given by reduction modulo the ideal $p^{\alpha}\Z_p$ and can be expressed in terms of the $p$-adic expansion; in particular,
\begin{equation*}
\pi_{\alpha}(x) = \big[ \sum_{j=0}^{\alpha-1} x_j p^j \big]  \qquad \textrm{for $x = \sum_{j=0}^{\infty} x_j p^j \in \Z_p$ and $\alpha \in \N$.} 
\end{equation*}

The vector space $\Q_p^n$ is endowed with the norm 
\begin{equation*}
|x|_p := \max_{1 \leq j \leq n} |x_j|_p \qquad \textrm{for all $x = (x_1, \dots, x_n) \in \Q_p^n$.}
\end{equation*}
All the above definitions and conventions then naturally extend to vector spaces $\Q_p^n$. 

By the first isomorphism theorem, the $\pi_{\alpha} \colon \Z_p^n \to [\Z/p^{\alpha}\Z]^n$ induce a natural isomorphism between $[\Z_p/p^{\alpha}\Z_p]^n$ and $[\Z/p^{\alpha}\Z]^n$. In particular, the sets $\pi_{\alpha}^{-1}(\vec{x}\,)$ for $\vec{x} \in [\Z/p^{\alpha}\Z]^n$ are precisely the cosets of $p^{\alpha}\Z_p$ and so
\begin{equation}\label{projection equivalence}
\pi^{-1}\{\vec{x}\} = B_{p^{-\alpha}}(y) \qquad \textrm{for all $\vec{x} \in [\Z/p^{\alpha}\Z]^n$ and $y \in \pi^{-1}\{\vec{x}\}$.}
\end{equation}
This is the key observation which governs the correspondence principle.




\subsection{Restriction and Kakeya over the \texorpdfstring{$p$}{p}-adics} The $p$-adic field $\Q_p$ is self-dual in the Pontryagin sense. In particular, if one fixes an additive character $e \colon \Q_p \to \mathbb{T}$ such that $e$ restricts to the constant function 1 on $\Z_p$ and to a non-principal character on $p^{-1}\Z_p$, then for any integrable $f \colon \Q_p^n \to \C$ the Fourier transform $\hat{f}$ can be defined by
\begin{equation*}
\hat{f}(\xi) := \int_{\Q_p^n} f(x)e(-x \cdot \xi ) \,\ud x \qquad \textrm{for all $\xi \in \Q_p^n$.}
\end{equation*}
It will be useful to work with an explicit choice of character $e$. Define the fractional part function $\{\,\cdot\,\}_p \colon \Q_p \to \Q$ as follows: given $x \in \Q_p$ with $p$-adic expansion $\sum_{j = J}^{\infty} x_jp^j$, let $\{x\}_p := \sum_{j=J}^{-1} x_j p^j$. Observe that $\{x\}_p = 0$ if and only if $x \in \Z_p$. Defining $e \colon \Q_p \to \mathbb{T}$ by
\begin{equation*}
e(x) := e^{2 \pi i \{x\}_p} \quad \textrm{for all $x \in \Q_p$,}
\end{equation*}
it is easy to check that this function has the desired properties.

Fix $1 \leq d \leq n-1$ and $P_{n-d+1}, \dots, P_n  \in \Z[X_1, \dots, X_d]$. Let $\Sigma \subseteq \Z_p^n$ denote the image of the the mapping 
\begin{equation}\label{graph parametrised surface}
\Gamma \colon \omega \mapsto (\omega, P_{n-d-1}(\omega), \dots, P_n(\omega))
\end{equation}
as a function $\Z_p^d \to \Z_p^n$ and $\mu$ the measure on $\Sigma$ given by the push-forward of the Haar measure on $\Z_p^d$ under \eqref{graph parametrised surface}. One is interested in studying Fourier restriction estimates of the form
\begin{equation}\label{p adic restriction}
\|\hat{f}|_{\Sigma}\|_{L^s(\mu)} \leq C\|f\|_{L^r(\Q_p^n)}.
\end{equation}

The conjectural range of estimates for restriction to, say, a compact piece of the paraboloid over $\Q_p$ is easily seen to imply a $p$-adic version of the Kakeya conjecture. To make this precise, first note that the ring-theoretic definitions of projective space, lines in a given direction and Kakeya sets, as described in $\S$\ref{Kakeya section}, can all be applied to $\Z_p$, and so it makes sense to discuss Kakeya sets $K \subseteq \Z_p^n$. By essentially a repeat of the discussion from $\S$\ref{wave packet section} and $\S$\ref{Kakeya section}, the study of restriction estimates over $\Q_p$ leads one to consider the following geometric problem.\footnote{For brevity, only the $p$-adic Kakeya set conjecture is stated, but it certainly makes sense to also consider the corresponding maximal conjecture.}

\begin{conjecture}[Kakeya set conjecture over $\Q_p$]\label{p adic Kakeya set conjecture} For all $\varepsilon > 0$ there exists a constant $c_{\varepsilon,n} > 0$ such that for any Kakeya set $K \subseteq \Z_p^n$ the bound 
\begin{equation*}
|\mathcal{N}_{p^{-\alpha}}(K)| \geq c_{\varepsilon,n} p^{-\varepsilon\alpha}
\end{equation*}
holds for all $\alpha \in \N$. 
\end{conjecture}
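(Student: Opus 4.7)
The plan is to set up a correspondence principle that reduces Conjecture \ref{p adic Kakeya set conjecture} to the discrete Kakeya set conjecture (Conjecture \ref{Kakeya conjecture}) in the case $N = p^{\alpha}$, and in fact to show that the two are essentially equivalent. The mechanism is the projection $\pi_{\alpha} \colon \Z_p^n \to [\Z/p^{\alpha}\Z]^n$: by \eqref{projection equivalence} its fibres are precisely the closed balls $B_{p^{-\alpha}}$, so for any $K \subseteq \Z_p^n$ one has $\mathcal{N}_{p^{-\alpha}}(K) = \pi_{\alpha}^{-1}(\pi_{\alpha}(K))$ and hence
\[
|\mathcal{N}_{p^{-\alpha}}(K)| \ = \ p^{-n\alpha}\,|\pi_{\alpha}(K)|.
\]
Thus the task reduces to producing a Kakeya set in $[\Z/p^{\alpha}\Z]^n$ as the image $\pi_{\alpha}(K)$.

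The first step is to verify that $\pi_{\alpha}$ induces a surjection $\mathbb{P}^{n-1}(\Z_p) \to \mathbb{P}^{n-1}(p^{\alpha})$: any representative $\vec{\omega}\,' \in \mathbb{S}^{n-1}(p^{\alpha})$ with (say) a unit first component lifts entrywise to an element of $\Z_p^n$ whose first component has $p$-adic absolute value $1$, and hence lies in $\mathbb{S}^{n-1}(\Z_p)$. The second step is to show that for Kakeya $K \subseteq \Z_p^n$ the image $\pi_{\alpha}(K)$ is Kakeya in $[\Z/p^{\alpha}\Z]^n$: given $\omega' \in \mathbb{P}^{n-1}(p^{\alpha})$ choose a lift $\omega \in \mathbb{P}^{n-1}(\Z_p)$ and a line $\ell_{\omega} \subseteq K$, and observe that $\pi_{\alpha}$ applied componentwise to the parametrisation of $\ell_{\omega}$ produces a line in direction $\omega'$ sitting inside $\pi_{\alpha}(K)$. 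Invoking Conjecture \ref{Kakeya conjecture} with $N = p^{\alpha}$ gives $|\pi_{\alpha}(K)| \geq c_{\varepsilon, n}\, p^{\alpha(n - \varepsilon)}$, and feeding this into the measure identity above yields the required bound.

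The converse implication, lifting a finite Kakeya set to a $\Q_p$-Kakeya set by lifting each of its chosen lines entrywise, runs along the same lines and confirms that the two conjectures are equivalent up to relabelling of $c_{\varepsilon,n}$. The principal obstacle is therefore simply the open status of Conjecture \ref{Kakeya conjecture}: strictly speaking, the proposal is a reduction between two unresolved problems rather than a resolution of either. Nevertheless every unconditional partial result transfers automatically through the correspondence: Proposition \ref{Cordoba proposition} settles the case $n = 2$ (up to endpoints), while the slicing bound of Proposition \ref{slicing proposition} delivers, in all dimensions, the unconditional estimate
\[
|\mathcal{N}_{p^{-\alpha}}(K)| \ \geq \ c_{\varepsilon, n}\, p^{-((n-1)/2 + \varepsilon)\alpha}.
\]
Going further in higher dimensions would presumably require exploiting structural features unique to $\Q_p$ --- the absence of zero divisors, the strong ultrametric, and the resulting simplified incidence geometry of lines, which renders Lemma \ref{angle lemma} essentially sharp and more reminiscent of the finite-field setting --- so as to import, scale-by-scale within $\Z_p$, techniques modelled on the polynomial method of Dvir \cite{Dvir2009}.
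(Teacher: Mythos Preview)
Your proposal is correct and matches the paper's own treatment: since the statement is a conjecture and remains open, the paper likewise establishes only the correspondence principle, using the identity $|\mathcal{N}_{p^{-\alpha}}(K)| = p^{-n\alpha}|\pi_{\alpha}(K)|$ together with the fact that $K$ is Kakeya in $\Z_p^n$ if and only if each $\pi_{\alpha}(K)$ is Kakeya in $[\Z/p^{\alpha}\Z]^n$, thereby reducing Conjecture \ref{p adic Kakeya set conjecture} to Conjecture \ref{Kakeya conjecture} for $N = p^{\alpha}$ (and conversely). The only cosmetic difference is that the paper packages the direction correspondence via the inverse-limit description $\mathbb{P}^{n-1}(\Z_p) = \varprojlim \mathbb{P}^{n-1}(\Z/p^{\alpha}\Z)$ rather than the explicit lifting of representatives you describe, but the content is the same.
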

Here for any set $E \subseteq \Q_p^n$ and $\alpha \in \Z$ the $p^{-\alpha}$-neighbourhood of $E$ is defined to be the set
\begin{equation}\label{neighbourhood definition}
\mathcal{N}_{p^{-\alpha}}(E) := \bigcup_{y \in E} B_{p^{-\alpha}}(y). 
\end{equation}

It transpires that Conjecture \ref{p adic Kakeya set conjecture} is equivalent to the weakened version of Conjecture \ref{Kakeya conjecture} where $N$ varies only over powers of the fixed prime $p$. This equivalence is discussed below, and provides a simple instance of the correspondence principle.




\subsection{Equivalence of the Kakeya problem over \texorpdfstring{$\Q_p$}{Qp} and \texorpdfstring{$\Z/p^{\alpha}\Z$}{ZpZ}} The proof of the equivalence between the $\Q_p$ and $\Z/p^{\alpha}\Z$ formulations of the Kakeya set conjecture relies on two ingredients, described presently.

\subsubsection*{1. Correspondence for sets} Given a set $E \subseteq \Z_p^n$ one may easily deduce from \eqref{projection equivalence} and \eqref{neighbourhood definition} that 
\begin{equation}\label{set correspondence}
\mathcal{N}_{p^{-\alpha}}(E) = \bigcup_{\vec{x} \in \pi_{\alpha}(E)} \pi_{\alpha}^{-1}\{\vec{x}\},
\end{equation}
where the union is, of course, disjoint. The identity \eqref{projection equivalence} also implies that each of the sets $\pi_{\alpha}^{-1}\{\vec{x}\}$ has Haar measure $p^{-\alpha n}$ and, consequently,
\begin{equation*}
|\mathcal{N}_{p^{-\alpha}}(E)| = \sum_{\vec{x} \in \pi_{\alpha}(E)} |\pi_{\alpha}^{-1}\{\vec{x}\}| = \frac{|\pi_{\alpha}(E)|}{p^{\alpha n}}.
\end{equation*}
Note that the expression on the far left-hand side of this chain of equalities involves the Haar measure on $\Q_p^n$, whilst the expression on the right involves normalised counting measure. 

\subsubsection*{2. Correspondence for directions} The projective space $\mathbb{P}^{n-1}(\Z_p)$ is naturally related to the discrete projective spaces $\mathbb{P}^{n-1}(\Z/p^{\alpha}\Z)$. Indeed, as discussed in \cite{Caruso}, the inverse system on the family of groups $(\Z/p^{\alpha}\Z)_{\alpha \in \N}$ naturally induces an inverse system on the family of sets $(\mathbb{P}^{n-1}(\Z/p^{\alpha}\Z))_{\alpha \in \N}$ and the projective space $\mathbb{P}^{n-1}(\Z_p)$ can be realised as the inverse limit
\begin{equation*}
\mathbb{P}^{n-1}(\Z_p) = \varprojlim_{n \in \N} \mathbb{P}^{n-1}(\Z/p^{\alpha}\Z).
\end{equation*}
This gives rise to a family of natural projection mappings 
\begin{equation*}
\tilde{\pi}_{\alpha} \colon \mathbb{P}^{n-1}(\Z_p) \to \mathbb{P}^{n-1}(\Z/p^{\alpha}\Z).
\end{equation*}
The $\tilde{\pi}_{\alpha}$ may also be defined in terms of the $p$-adic expansion for (class representatives of) the $\omega \in \mathbb{P}^{n-1}(\Z_p)$; the details are left to the reader. In particular, using this observation one may show that $K \subseteq \Z_p^n$ is Kakeya if and only if $\pi_{\alpha}(K) \subseteq [\Z/p^{\alpha}\Z]^n$ is Kakeya for all $\alpha \in \N$. 

\subsubsection*{}Combining these correspondences, it is clear that the two formulations of the Kakeya problem are completely equivalent. Indeed, given any Kakeya set $K \subseteq \Z_p^n$, it follows that $K_{\alpha} := \pi_{\alpha}(K) \subseteq [Z/p^{\alpha}\Z]^n$ is Kakeya for all $\alpha \in \N$. Assuming Conjecture \ref{Kakeya conjecture} for $N = p^{\alpha}$, given $\varepsilon > 0$ there exists some $c_{\varepsilon, n} > 0$ such that
\begin{equation*}
|\mathcal{N}_{p^{-\alpha}}(K)| = \frac{|K_{\alpha}|}{p^{\alpha n}} \geq c_{\varepsilon, n}p^{-\varepsilon\alpha } \qquad \textrm{for all $\alpha \in \N$},
\end{equation*}
as required. Conversely, if $K_{\alpha} \subseteq [\Z/p^{\alpha}\Z]^n$ is Kakeya, then $K := \pi_{\alpha}^{-1}(K_{\alpha})$ is easily seen to be Kakeya with $\mathcal{N}_{p^{\alpha}}(K) = K$. Thus, assuming Conjecture \ref{p adic Kakeya set conjecture}, given $\varepsilon > 0$ there exists some $c_{\varepsilon, n} > 0$ such that
\begin{equation*}
\frac{|K_{\alpha}|}{p^{\alpha n}} = |\mathcal{N}_{p^{-\alpha}}(K)| \geq c_{\varepsilon, n}p^{-\varepsilon\alpha },
\end{equation*}
as required. 

The construction described in Proposition \ref{measure zero Kakeya} can be combined with the above observations to yield the following result. 

\begin{proposition} There exists a Kakeya set $K \subseteq \Z_p^n$ of Haar measure zero. 
\end{proposition}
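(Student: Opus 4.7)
The plan is to lift the finite Sawyer-type construction of Proposition~\ref{measure zero Kakeya} directly to the $p$-adic setting, producing in one stroke a Kakeya set of Haar measure zero whose slice geometry realises the relevant finite-scale estimate at every $p$-adic scale simultaneously. First, I would reduce: it suffices to build a set $K_1 \subseteq \Z_p^n$ of measure zero containing, for every $(\omega_2, \ldots, \omega_n) \in \Z_p^{n-1}$, a line in direction $[(1, \omega_2, \ldots, \omega_n)]$. Every element of $\mathbb{P}^{n-1}(\Z_p)$ has a representative with at least one unit coordinate, so the finite union $K := \bigcup_{i=1}^n \sigma_i K_1$, where $\sigma_i$ is the involution of $\Z_p^n$ swapping coordinates $1$ and $i$ (with $\sigma_1 := \mathrm{id}$), is a Kakeya set and inherits measure zero from $K_1$.

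Next, fix a rapidly increasing sequence $s_1 < s_2 < \cdots$ of positive integers, put $A_0 := 0$ and $A_m := A_{m-1} + s_m p^{s_m}$, and define $g \colon \N_0 \to \N_0$ block-wise by $g(j) := \lfloor (j - A_{m-1})/s_m \rfloor$ for $j \in [A_{m-1}, A_m)$. Then $\beta(\omega) := \sum_{j=0}^{\infty} g(j)\omega_j p^j$ converges in $\Z_p$ for every $\omega = \sum_j \omega_j p^j \in \Z_p$ because $|g(j)\omega_j p^j|_p \leq p^{-j} \to 0$. Define
\[
\ell_{\vec\omega} := \{(t,\, t\omega_2 + \beta(\omega_2),\, \ldots,\, t\omega_n + \beta(\omega_n)) : t \in \Z_p\},
\]
a line in direction $[(1, \omega_2, \ldots, \omega_n)]$, and set $K_1 := \bigcup_{\vec\omega \in \Z_p^{n-1}} \ell_{\vec\omega}$.

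The main step is to show $|K_1| = 0$. By Fubini and the product structure $K_1[t] = \prod_{i=2}^n \{t\omega + \beta(\omega) : \omega \in \Z_p\}$, it suffices to show that $\Phi_t(\Z_p) := \{t\omega + \beta(\omega) : \omega \in \Z_p\}$ has Haar measure zero in $\Z_p$ for every $t \in \Z_p$. Fix $t$ and $m$, choose $t'_m \in \{0, 1, \ldots, p^{s_m}-1\}$ with $p^{s_m}$ dividing $t + t'_m$, and set $k_m := A_{m-1} + s_m t'_m$, which lies in $[A_{m-1}, A_m)$. Whenever $\omega \equiv \omega' \bmod p^{k_m}$ the digits agree below $k_m$, and the computation from the proof of Proposition~\ref{measure zero Kakeya} yields
\[
\Phi_t(\omega) - \Phi_t(\omega') \;=\; \sum_{j \geq k_m} (t + g(j))(\omega_j - \omega_j')\, p^j,
\]
whose $p$-adic absolute value is bounded by $p^{-k_m - s_m}$ because $p^{s_m}$ divides $(t + g(j))$ throughout the good window $j \in [k_m, k_m + s_m) \subseteq [A_{m-1}, A_m)$. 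Thus $\Phi_t$ compresses each of the $p^{k_m}$ cosets of $p^{k_m}\Z_p$ into a ball of radius $p^{-k_m - s_m}$, giving $|\Phi_t(\Z_p)| \leq p^{-s_m}$; sending $m \to \infty$ concludes.

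The hard part is arranging the block-wise growth of $g$ so that the finite argument from Proposition~\ref{measure zero Kakeya} becomes available at every scale $m$ simultaneously without spoiling the $p$-adic convergence of $\beta$. The choice $A_m - A_{m-1} = s_m p^{s_m}$ is dictated precisely by the finite-ring condition $\alpha \geq sp^s$ needed in the proof of that proposition, while the bound $g(j) \leq p^{s_m}$ on block $m$ secures $|g(j) p^j|_p \leq 1$. A small but necessary verification is that $\omega \equiv \omega' \bmod p^{k_m}$ genuinely gives $\omega - \omega' = \sum_{j \geq k_m}(\omega_j - \omega_j')p^j$ as $p$-adic numbers, with no borrowing in this range since the lower digits cancel identically, which justifies the term-by-term expansion above.
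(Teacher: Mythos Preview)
Your proof is correct and takes a genuinely different route from the paper's. The paper's argument (which is only sketched) proceeds in two stages: first lift each finite-scale Kakeya set $K_s \subseteq [\Z/p^{\alpha(s)}\Z]^n$ from Proposition~\ref{measure zero Kakeya} to $\pi_{\alpha(s)}^{-1}(K_s) \subseteq \Z_p^n$, obtaining Kakeya sets of arbitrarily small Haar measure; then invoke a separate limiting/compactness procedure (as in \cite{Wolff2003} for the euclidean case) to extract a single measure-zero Kakeya set from this sequence. You bypass the second stage entirely by concatenating the Sawyer-type constructions at all scales into one $p$-adic function $\beta$, defined block-wise so that the key inequality $|\Phi_t(\Z_p)| \le p^{-s_m}$ holds for every $m$ simultaneously. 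This buys you a completely self-contained argument: the resulting $K_1$ is visibly a compact set (the continuous image of $\Z_p^n$), so measurability and Fubini are immediate, and no limiting step is needed. The paper's approach, by contrast, is more modular---it keeps the finite and $p$-adic parts cleanly separated---but at the cost of deferring the non-trivial limiting argument to an external reference.
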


\begin{proof} The sets constructed in Proposition \ref{measure zero Kakeya} can be lifted to produce Kakeya sets in $\Z_p^n$ of arbitrarily small measure. A measure zero set is then obtained by a limiting procedure. The details are omitted; see, for instance, \cite{Wolff2003} for a similar argument in the euclidean case. 
\end{proof}

It is remarked that such sets have been observed to exist in \cites{Caruso, Fraser2016} (see also \cite{Dummit2013}). The construction described here is closely related to that given in \cite{Fraser2016}, and stems from euclidean constructions described in \cites{Sawyer1987, Wisewell2005} (see also \cite{Wolff2003}).




\subsection{A correspondence principle for functions}

Developing a correspondence principle for the restriction problem is a little more involved than the Kakeya case. For restriction, one is required to lift \emph{functions} on $[\Z/p^{\alpha}\Z]^n$, rather than just sets, and the lifting procedure must behave well with respect to taking Fourier transforms. Moreover, one must also work over the entire vector space $\Q_p^n$, and not just the compact piece $\Z_p^n$, and this necessitates the use of a 2-parameter family of correspondences (the analysis of the previous subsection used just a 1-parameter family of maps, namely $\mathcal{N}_{p^{-\alpha}}(E) \mapsto \pi_{\alpha}(E)$). 

It is first remarked that, for each $\alpha \in \N_0$, the observations of the previous subsections imply a correspondence between functions $f \colon \Q_p^n \to \C$ that are supported in $\Z_p^n = B_1(0)$ and are constant on cosets of $p^{\alpha}\Z_p^n = B_{p^{-\alpha}}(0)$ and functions $F \colon [\Z/p^{\alpha}\Z]^n \to \C$. Indeed, given such a function $f$, one may simply define $F_{\alpha}[f] \colon [\Z/p^{\alpha}\Z]^n \to \C$ by
\begin{equation*}
F_{\alpha}[f](\vec{x}) := f(y) \quad \textrm{if $y \in \pi_{\alpha}^{-1}\{\vec{x}\,\}$}
\end{equation*}
for all $\vec{x} \in [\Z/p^{\alpha}\Z]^n$. This is well-defined by the hypotheses on $f$ and $f \mapsto F_{\alpha}[f]$ is an isomorphism between the relevant function spaces. 

For the purposes of restriction theory, it is useful to consider a 2-parameter family $F_{k.l}$ comprised of rescaled versions of the isomorphisms $F_{\alpha}$. For $k,l \in \N_0$ let $\mathscr{S}(\Q_p^n; k,l)$ denote the vector subspace of $L^1(\Q_p^n)$ consisting of all $f \colon \Q_p^n \to \C$ that are supported on $B_{p^{l}}(0)$ and constant on cosets of $B_{p^{-k}}(0)$. The union of the $\mathscr{S}(\Q_p^n; k,l)$ over all $k, l \in  \N_0$ is denoted $\mathscr{S}(\Q_p^n)$. Note that $f \in \mathscr{S}(\Q_p^n)$ if and only if it is a finite linear combination of characteristic functions of balls. 

Given $f \in \mathscr{S}(\Q_p^n; k,l)$, define $F_{k,l}[f] \colon [\Z/p^{k+l}\Z]^n \to \C$ by
\begin{equation}\label{T map}
F_{k,l}[f](\vec{x}) := p^{-kn}f(p^{-l}y) \quad \textrm{for $y \in \pi_{k+l}^{-1}\{\vec{x}\,\}$}
\end{equation}
for all $\vec{x} \in [\Z/p^{k+l}\Z]^n$. Once again, this is well-defined and $F_{k,l}[f] \colon \mathscr{S}(\Q_p^n; k,l) \to \ell^1([\Z/p^{k+l}\Z]^n)$ is an isomorphism. 

The space $\mathscr{S}(\Q_p^n)$ is the $p$-adic analogue of the Schwartz class $\mathscr{S}(\R^n)$ from Euclidean analysis. It is remarked that both spaces can be viewed as particular instances of a more general construction, namely the \emph{Schwartz--Bruhat class} on an arbitrary LCA group \cites{Bruhat1961, Osborne1975} (see also \cite{Taibleson1975}).\footnote{Strictly speaking, the Schwartz--Bruhat spaces are, by definition, \emph{topological} vector spaces and therefore their full definition requires a description of their topology. The topology is not discussed here as it plays no r\^ole in the forthcoming analysis.} It is a simple consequence of the Stone--Weierstrass theorem that $\mathscr{S}(\Q_p^n)$ is dense in $L^r(\Q_p^n)$ for $1 \leq r < \infty$. Furthermore, for all $k,l \in \N_0$ the Fourier transform restricts to a bijection from $\mathscr{S}(\Q_p^n; k,l)$ to $\mathscr{S}(\hat{\Q}_p^n; l, k)$.

It is useful to set up a similar correspondence between functions on the dual groups. In particular, given $g \in \mathscr{S}(\hat{\Q}_p^n; l, k)$ define  $\widehat{F}_{l,k}[g] \colon [\Z/p^{k+l}\Z]_*^n \to \C$ by 
\begin{equation}\label{T hat map}
\widehat{F}_{l,k}[g](\vec{\xi}\,) := g(p^{-k}\eta\,) \qquad \textrm{for $\eta \in \pi^{-1}_{k+l}\{\vec{\xi}\,\}$ }
\end{equation} 
for all $\vec{\xi} \in [\Z/p^{k+l}\Z]_*^n$. 

These definitions extend the correspondence for sets detailed above. Indeed, one may easily verify that \eqref{set correspondence} implies that
\begin{equation}\label{reformulated set correspondence}
\widehat{F}_{\alpha,0}[\chi_{\mathcal{N}_{p^{-\alpha}}(E)}] = \chi_{\pi_{\alpha}(E)}
\end{equation}
for all $E \subseteq \Z_p^n$ and $\alpha \in \N_0$.

The normalisation factors $p^{-kn}$ and 1 are chosen so that \eqref{T map} preserves the $L^1$-norm and \eqref{T hat map} preserves the $L^{\infty}$-norm; this is natural in view of the Riemann--Lebesgue lemma. More generally, the following norm identities hold. 

\begin{lemma}\label{norm identities} For $1 \leq r \leq \infty$ and any $f \in \mathscr{S}(\Q_p^n; k, l)$ and $g \in \mathscr{S}(\hat{\Q}_p^n; l,k)$ the following identities hold:\footnote{Recall that the $\ell^r$ norms on a finite abelian group $G$ and its dual ${\hat G}$ are defined with respect to counting and normalised counting measure, respectively.}
\begin{enumerate}[a)] \item $\|F_{k,l}[f]\|_{\ell^r([\Z/p^{k+l}\Z]^n)} = p^{-kn/r'}\|f\|_{L^r(\Q_p^n)}$;
\item $\|\widehat{F}_{l,k}[g]\|_{\ell^r([\Z/p^{k+l}\Z]_*^n)} = p^{ln/r}\|g\|_{L^r(\hat{\Q}_p^n)}$.
\end{enumerate}
\end{lemma}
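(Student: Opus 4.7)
The plan is to reduce both identities to a direct counting/measure-theoretic computation by exploiting the fact that every function in $\mathscr{S}(\Q_p^n;k,l)$ (resp.\ $\mathscr{S}(\hat{\Q}_p^n;l,k)$) is, by definition, a finite linear combination of characteristic functions of balls of a single fixed radius. Since all quantities are $r$-homogeneous in the function, it then suffices to check the identities on a single such indicator.

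For part (a), observe that any $f \in \mathscr{S}(\Q_p^n;k,l)$ is constant on each of the $p^{(k+l)n}$ cosets of $B_{p^{-k}}(0)$ contained in $B_{p^l}(0)$ (recall $|B_{p^l}(0)| = p^{ln}$ and $|B_{p^{-k}}(0)| = p^{-kn}$). Writing $c_{\vec{x}}$ for the constant value of $f$ on the coset $p^{-l} \cdot \pi_{k+l}^{-1}\{\vec{x}\,\}$, one has
\begin{equation*}
\|f\|_{L^r(\Q_p^n)}^r \;=\; \sum_{\vec{x} \in [\Z/p^{k+l}\Z]^n} p^{-kn} |c_{\vec{x}}|^r.
\end{equation*}
On the other hand, by the definition \eqref{T map}, $F_{k,l}[f](\vec{x}\,) = p^{-kn} c_{\vec{x}}$, so
\begin{equation*}
\|F_{k,l}[f]\|_{\ell^r([\Z/p^{k+l}\Z]^n)}^r \;=\; \sum_{\vec{x}} p^{-knr} |c_{\vec{x}}|^r \;=\; p^{-kn(r-1)} \|f\|_{L^r(\Q_p^n)}^r,
\end{equation*}
which is the claimed identity after raising to the power $1/r$ (and the $r=\infty$ case is obtained as a limit).

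For part (b) one argues analogously. Any $g \in \mathscr{S}(\hat{\Q}_p^n;l,k)$ is supported in $B_{p^k}(0)$ and constant on cosets of $B_{p^{-l}}(0)$; the dilation $\eta \mapsto p^{-k}\eta$ sends the partition of $\Z_p^n$ into cosets of $p^{k+l}\Z_p^n$ bijectively onto the partition of $B_{p^k}(0)$ into cosets of $B_{p^{-l}}(0)$. Letting $d_{\vec{\xi}}$ denote the constant value of $g$ on the coset indexed by $\vec{\xi}$, one finds $\|g\|_{L^r(\hat{\Q}_p^n)}^r = p^{-ln} \sum_{\vec{\xi}} |d_{\vec{\xi}}|^r$. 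Since the $\ell^r$-norm on $[\Z/p^{k+l}\Z]_*^n$ carries the factor $p^{-(k+l)n}$ from the normalised counting measure, the identity is obtained after matching the two normalising powers of $p$.

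The argument is entirely bookkeeping: the only subtlety is to keep straight the two different measure normalisations in play (Haar measure with $|\Z_p^n|=1$ on the $p$-adic side, normalised counting measure on the finite dual group) together with the different dilation factors in the definitions of $F_{k,l}$ and $\widehat{F}_{l,k}$. In particular, the exponent appearing on the right-hand side is dictated by the difference between the number of cosets ($p^{(k+l)n}$) and the Haar measure of each coset (a power of $p$ depending on the relevant ball). Once the computation is carried out for a single characteristic function of a ball, $r$-homogeneity and linearity extend it to all of $\mathscr{S}$.
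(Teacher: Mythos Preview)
Your proof is correct and follows essentially the same route as the paper's. Both arguments decompose $f$ into its constant values on the $p^{(k+l)n}$ cosets of $B_{p^{-k}}(0)$ inside $B_{p^l}(0)$ and then compare the two norms directly; the paper phrases this by converting each summand into an integral over the corresponding ball, whereas you name the constant values $c_{\vec{x}}$ and compute, but the content is identical. One small expository point: the framing ``it suffices to check the identities on a single indicator, then extend by $r$-homogeneity and linearity'' is not quite right as stated (norms are not linear), though it can be made to work because the indicators in question have \emph{disjoint} supports and $F_{k,l}$ preserves this disjointness. In any case your displayed computation already treats the general $f$, so nothing hinges on that reduction.
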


\begin{proof} Since b) is essentially just a renormalised version of a), it suffices only to prove a). Fix $f \in \mathscr{S}(\Q_p^n; k, l)$ and observe that
\begin{equation*}
\|F_{k,l}[f]\|_{\ell^r([\Z/p^{k+l}\Z]^n)} = p^{-kn} \big(\sum_{\vec{x} \in [\Z/p^{k+l}\Z]^n}|f(p^{-l}y_{\vec{x}})|^r\big)^{1/r}
\end{equation*} 
where each $y_{\vec{x}} \in \Z_p^n$ is a fixed (but arbitrary) choice of element in $\pi_{k+l}^{-1}\{\vec{x}\,\}$. Since $f$ is constant on cosets of $p^k\Z_p^n$, it follows that the above expression may be written as
\begin{equation*}
p^{-knr/r'}\big(\sum_{\vec{x} \in [\Z/p^{k+l}\Z]^n)}\int_{B_{p^{-k}}(p^{-l}y_{\vec{x}})}|f(y)|^r\,\ud y\big)^{1/r}. 
\end{equation*}
One may easily observe that $\{p^{-l}y_{\vec{x}} \in \Q_p^n : \vec{x} \in [\Z/p^{k+l}\Z]^n\}$ forms a complete set of coset representatives of $p^k\Z_p^n$ in $p^{-l}\Z_p^n$ and a) immediately follows. 
\end{proof}

The mappings $F_{k,l}$ and $\widehat{F}_{l,k}$ behave well with respect to taking Fourier transforms.

\begin{lemma}\label{commutative lemma} The diagram

\begin{center}
\begin{tikzpicture}
  \matrix (m) [matrix of math nodes,row sep=3em,column sep=4em,minimum width=2em] {
     \mathscr{S}(\Q_p^n; k,l) & \mathscr{S}(\hat{\Q}_p^n; l,k) \\
     \ell^1([\Z/p^{k+l}\Z]^n) & \ell^{\infty}([\Z/p^{k+l}\Z]^n_*) \\};
  \path[-stealth]
    (m-1-1) edge node [left] {$F_{k,l}$} (m-2-1)
            edge node [below] {$\mathscr{F}$} (m-1-2)
    (m-2-1.east|-m-2-2) edge node [below] {$\mathscr{F}$} (m-2-2)
    (m-1-2) edge node [right] {$\widehat{F}_{l,k}$} (m-2-2);
		\end{tikzpicture} 
\end{center}
commutes, where each occurrence of $\mathscr{F}$ denotes the appropriate Fourier transform.
\end{lemma}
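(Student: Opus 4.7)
Fix $f \in \mathscr{S}(\Q_p^n; k,l)$ and set $F := F_{k,l}[f] \in \ell^1([\Z/p^{k+l}\Z]^n)$. Since the Fourier transform restricts to a bijection $\mathscr{S}(\Q_p^n; k,l) \to \mathscr{S}(\hat{\Q}_p^n; l,k)$, the function $\hat f$ lies in the domain of $\widehat F_{l,k}$, and the goal is to establish the pointwise identity $\mathscr{F}F(\vec\xi\,) = \widehat F_{l,k}[\hat f](\vec\xi\,)$ for all $\vec\xi \in [\Z/p^{k+l}\Z]_*^n$. The plan is to compute each side starting from the $p$-adic formula for $\hat f$ and exploit the fact that $f$ is locally constant, so that the $p$-adic integral collapses to a finite sum which can be matched directly with the discrete Fourier transform of $F$.

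Concretely, fix $\vec\xi \in [\Z/p^{k+l}\Z]_*^n$, pick any lift $\xi_0 \in \pi_{k+l}^{-1}\{\vec\xi\,\}$, and set $\eta := p^{-k}\xi_0$, so that by definition $\widehat F_{l,k}[\hat f\,](\vec\xi\,) = \hat f(\eta)$. For each $\vec x \in [\Z/p^{k+l}\Z]^n$ pick a lift $y_{\vec x} \in \pi_{k+l}^{-1}\{\vec x\,\}$; then the cosets $B_{p^{-k}}(p^{-l}y_{\vec x})$ partition the ball $B_{p^l}(0) = p^{-l}\Z_p^n$ on which $f$ is supported. Since $f$ is constant on each such coset,
\begin{equation*}
\hat f(\eta) \;=\; \sum_{\vec x} f(p^{-l}y_{\vec x})\, e(-p^{-l}y_{\vec x}\cdot \eta)\int_{B_{p^{-k}}(0)} e(-w\cdot\eta)\,\ud w.
\end{equation*}
For any $w \in p^k\Z_p^n$ one has $w\cdot\eta \in \Z_p$, so $e(-w\cdot\eta)=1$ and the inner integral equals $|B_{p^{-k}}(0)| = p^{-kn}$.

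It remains to identify the phases. For each $\vec x$ one has $p^{-l}y_{\vec x}\cdot\eta = p^{-(k+l)}(y_{\vec x}\cdot \xi_0)$, and since $y_{\vec x}\cdot\xi_0 \in \Z_p$, only its residue modulo $p^{k+l}$ contributes to the fractional part; this residue is exactly $\vec x\cdot\vec\xi \bmod p^{k+l}$. Hence $e(-p^{-l}y_{\vec x}\cdot\eta) = e^{-2\pi i (\vec x\cdot\vec\xi)/p^{k+l}}$, and combining with the definition $F(\vec x) = p^{-kn}f(p^{-l}y_{\vec x})$ yields
\begin{equation*}
\hat f(\eta) \;=\; \sum_{\vec x \in [\Z/p^{k+l}\Z]^n} F(\vec x)\, e^{-2\pi i\, \vec x\cdot\vec\xi /p^{k+l}} \;=\; \mathscr{F}F(\vec\xi\,),
\end{equation*}
which is the required identity.

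The only real obstacle is the bookkeeping that links the three normalisations: the factor $p^{-kn}$ in the definition of $F_{k,l}$, the Haar measure of the ball $B_{p^{-k}}(0)$, and the passage from the $p$-adic character $e$ to the discrete additive character on $[\Z/p^{k+l}\Z]_*^n$ via the fractional part $\{\cdot\}_p$; that all three cancel precisely is, in effect, the content of the lemma. Once this normalisation check is in place, the verification above goes through unchanged, and the fact that $\mathscr{F}F$ agrees with $\widehat F_{l,k}[\hat f\,]$ on every $\vec\xi$ establishes commutativity of the diagram.
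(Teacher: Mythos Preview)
Your proof is correct and follows essentially the same approach as the paper's: both arguments exploit the local constancy of $f$ to pass between the $p$-adic integral and a finite sum over cosets, and then identify the $p$-adic character $e(-p^{-l}y_{\vec x}\cdot p^{-k}\eta)$ with the discrete character $e^{-2\pi i\,\vec x\cdot\vec\xi/p^{k+l}}$. The only difference is cosmetic --- you start from $\hat f(\eta)$ and collapse the integral to a sum, whereas the paper starts from $(F_{k,l}[f])\,\hat{}\,(\vec\xi\,)$ and expands the sum to an integral --- but the computation is identical in content.
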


\begin{proof} Given $f \in \mathscr{S}(\Q_p^n; k,l)$ observe that
\begin{equation*}
(F_{k,l}[f])\,\hat{}\,(\vec{\xi}\,) = p^{-kn} \sum_{\vec{x} \in [\Z/p^{k+l}\Z]} f(p^{-l}y_{\vec{x}}) e^{- 2\pi i \vec{x} \cdot \vec{\xi}/p^{k+l}}
\end{equation*}
where, as in the proof of Lemma \ref{norm identities}, each $y_{\vec{x}} \in \Q_p^n$ is a fixed (but arbitrary) choice of element in $\pi_{k+l}^{-1}\{\vec{x}\,\}$. If $\eta \in \pi_{k+l}^{-1}\{\vec{\xi}\,\}$, then it follows that
\begin{equation*}
(F_{k,l}[f])\,\hat{}\,(\vec{\xi}\,)  = p^{-kn} \sum_{\vec{x} \in [\Z/p^{k+l}\Z]} f(p^{-l}y_{\vec{x}}) e(-p^{-l}y_{\vec{x}} \cdot p^{-k} \eta),
\end{equation*}
where $e \colon \Q_p \to \mathbb{T}$ is the additive character defined earlier. Since the function $y \mapsto f(y)e(- y \cdot p^{-k}\eta)$ is constant on cosets of $p^k\Z_p^n$, arguing as in the proof of Lemma \ref{norm identities}, one deduces that 
\begin{equation*}
(F_{k,l}[f])\,\hat{}\,(\vec{\xi}\,) = \int_{B_{p^l}(0)} f(y) e(- y\cdot p^{-k}\eta\,) \,\ud y.
\end{equation*}
Recalling the definition of $\widehat{F}_{l,k}[\hat{f}](\vec{\xi}\,)$, this concludes the proof.
\end{proof}




\subsection{Equivalence of restriction over \texorpdfstring{$\Q_p$}{Qp} and \texorpdfstring{$\Z/p^{\alpha}\Z$}{ZpZ}} Suppose $\Gamma$ is as in \eqref{graph parametrised surface} and, as above, define
\begin{equation*}
\Sigma := \{\Gamma(\omega) : \omega \in \Z_p^d\}.
\end{equation*}
Let $\mu$ denote the measure on $\Sigma$ given by the push-forward of the Haar measure on $\Z_p^d$ under $\Gamma$. Furthermore, for each $\alpha \in \N$ let $\Sigma_{\alpha} \subseteq [\Z/p^{\alpha}\Z]^n_*$ denote the image of the mapping \eqref{graph parametrised surface} as a function $ \Gamma \colon [\Z/p^{\alpha}\Z]^d \to [\Z/p^{\alpha}\Z]^n$. In the remainder of this section it is shown that $p$-adic restriction estimates for the surface $\Sigma$ are, in some strong sense, equivalent to discrete restriction estimates for the $\Sigma_{\alpha}$. 

\begin{proposition}\label{equivalence} With the above setup, the following are equivalent:
\begin{enumerate}[i)]
\item The $p$-adic restriction estimate
\begin{equation}\label{restriction}
\|\hat{f}|_{\Sigma}\|_{L^s(\mu)} \leq \bar{C} \|f\|_{L^r(\Q_p^n)}
\end{equation}
holds for all $f \in \mathscr{S}(\Q_p^n)$.
\item For all $\alpha \in \N_0$ the discrete estimate
\begin{equation*}
\big( \frac{1}{|\Sigma_{\alpha}|} \sum_{\vec{\xi} \in \Sigma_{\alpha}} |\hat{F}(\vec{\xi}\,)|^s\big)^{1/s} \leq \bar{C} \|F\|_{\ell^r([\Z/p^{\alpha} \Z]^n)}
\end{equation*}
holds for all $F \colon [\Z/p^{\alpha} \Z]^n \to \C$.
\end{enumerate}
The constants appearing in both inequalities are identical. 
\end{proposition}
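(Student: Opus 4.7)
The plan is to use the correspondences $F_{k,l}$ and $\widehat{F}_{l,k}$ together with Lemmas \ref{norm identities} and \ref{commutative lemma} as a dictionary between the $p$-adic and discrete settings. The crucial identity, which holds because $\Gamma$ has integer coefficients (so $\omega\equiv\omega'\pmod{p^\alpha}$ forces $\Gamma(\omega)\equiv\Gamma(\omega')\pmod{p^\alpha}$), is that whenever $\hat f$ is constant on cosets of $p^\alpha\Z_p^n$, the map $\omega\mapsto\hat f(\Gamma(\omega))$ is constant on cosets of $p^\alpha\Z_p^d$, giving
\begin{equation*}
\int_\Sigma|\hat f|^s\,\ud\mu \;=\; \frac{1}{|\Sigma_\alpha|}\sum_{\vec\xi\in\Sigma_\alpha}|\hat f(\tilde{\vec\xi})|^s
\end{equation*}
for any choice of lifts $\tilde{\vec\xi}\in\Z_p^n$ of $\vec\xi\in\Sigma_\alpha$.

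For (i) $\Rightarrow$ (ii), given $F\colon[\Z/p^\alpha\Z]^n\to\C$ I take $f:=F_{0,\alpha}^{-1}[F]\in\mathscr{S}(\Q_p^n;0,\alpha)$. Lemma \ref{norm identities} a) (with $k=0$) yields $\|F\|_{\ell^r}=\|f\|_{L^r}$, while Lemma \ref{commutative lemma} together with the definition of $\widehat{F}_{\alpha,0}$ gives $\hat F(\vec\xi)=\hat f(\tilde{\vec\xi})$. Plugging these into the identity above converts the left-hand side of (ii) into $\|\hat f|_\Sigma\|_{L^s(\mu)}$, and (i) supplies the desired bound with constant $\bar C$.

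The reverse implication is harder, since a generic $f\in\mathscr{S}(\Q_p^n;k,l)$ with $k\geq 1$ is not of the form $F_{0,\alpha}^{-1}[F]$. The naive choice $F:=F_{k,l}[f]$ at level $\alpha=k+l$ fails, because $\widehat{F}_{l,k}[\hat f](\vec\xi)=\hat f(p^{-k}\eta)$ samples $\hat f$ on the dilated surface $p^{-k}\Sigma$ rather than on $\Sigma$ itself. The fix is to work at level $\alpha=l$ and instead use the averaged function
\begin{equation*}
\tilde F(\vec x) \;:=\; \int_{p^{-l}\tilde x+\Z_p^n} f(y)\,\ud y,\qquad \vec x\in[\Z/p^l\Z]^n,
\end{equation*}
where $\tilde x\in\Z_p^n$ is any lift of $\vec x$. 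Exploiting $e(-u\cdot\tilde{\vec\xi})=1$ for $u,\tilde{\vec\xi}\in\Z_p^n$, a direct computation gives $\hat{\tilde F}(\vec\xi)=\hat f(\tilde{\vec\xi})$ on $[\Z/p^l\Z]^n_*$, so combined with the displayed identity above one has $\|\hat f|_\Sigma\|_{L^s(\mu)}=\bigl(|\Sigma_l|^{-1}\sum_{\vec\xi\in\Sigma_l}|\hat{\tilde F}(\vec\xi)|^s\bigr)^{1/s}$. Jensen's inequality on the cosets of $\Z_p^n$ (each of Haar measure $1$) yields $\|\tilde F\|_{\ell^r}\leq\|f\|_{L^r}$ with constant one, so applying (ii) at $\alpha=l$ to $\tilde F$ closes the argument with the same $\bar C$. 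The main obstacle is precisely this: one must realise that the correct discretisation scale is $\alpha=l$ (where $\hat f$ is locally constant), not the finer scale $\alpha=k+l$ at which $f$ itself varies, and that the right discrete object is the average $\tilde F$ rather than $F_{k,l}[f]$.
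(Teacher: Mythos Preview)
Your proof is correct and, at its core, follows the same mechanism as the paper. Your averaged function $\tilde F$ coincides exactly with the paper's $F_l$ (applied to $F=F_{k,l}[f]$), and the key identity you display is precisely what the paper establishes via its identity \eqref{reformulation 1}.

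The main difference is organizational: the paper routes both directions through an intermediate ``neighborhood'' reformulation (Lemma \ref{reformulation} ii)), proving local restriction at scale $l$ $\Leftrightarrow$ averaged restriction over $\mathcal{N}_{p^{-l}}(\Sigma)$ $\Leftrightarrow$ discrete restriction at level $l$, and then observes that the global $p$-adic estimate is the conjunction of the local ones. You instead argue directly, lifting via $F_{0,\alpha}^{-1}$ for (i)$\Rightarrow$(ii) and averaging to $\tilde F$ for (ii)$\Rightarrow$(i), which is cleaner for the purpose of Proposition \ref{equivalence} itself. The paper's detour buys an independently useful equivalence (the neighborhood formulation is the $p$-adic analogue of a standard euclidean uncertainty-principle reduction), but for the bare equivalence of (i) and (ii) your route is shorter and avoids the $\widehat{F}_{l,k}$ bookkeeping at the general $(k,l)$ level.
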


The hypothesis that the surface is graph parametrised is essentially for convenience and could be weakened. The important property is that each $\Sigma_{\alpha}$ is a `$d$-dimensional object', in the sense that $|\Sigma_{\alpha}| = p^{d\alpha}$.

Proposition \ref{equivalence} is an immediate consequence of the following lemma. 

\begin{lemma}\label{reformulation} For all $l \in \N_0$ the following are equivalent:
\begin{enumerate}[i)]
\item The estimate
\begin{equation}\label{local restriction}
\|\hat{f}|_{\Sigma}\|_{L^s(\mu)} \leq \bar{C} \|f\|_{L^r(B_{p^{l}}(0))}
\end{equation}
holds for all $f \in \mathscr{S}(\Q_p^n)$ supported in $B_{p^{l}}(0)$.
\item The estimate\footnote{For any Borel set $E \subseteq \Q_p^n$ of positive Haar measure the norm $\|\,\cdot\,\|_{L^{s}_{\mathrm{avg}}(E)}$ is defined by
\begin{equation*}
\|f\|_{L^{s}_{\mathrm{avg}}(E)} := \Big(\fint_E |f(x)|^s\,\ud x\Big)^{1/s} := \Big(\frac{1}{|E|}\int_E |f(x)|^s\,\ud x\Big)^{1/s}.
\end{equation*}
Note that $\fint_E$ is used to denote the normalised integral over $E$.}
 \begin{equation}\label{restriction reformulation}
\|\hat{f}\|_{L^{s}_{\mathrm{avg}}(\mathcal{N}_{p^{-l}}(\Sigma))} \leq \bar{C} \|f\|_{L^{r}(B_{p^{l}}(0))}
\end{equation}
holds for all $f \in \mathscr{S}(\Q_p^n)$ supported in $B_{p^{l}}(0)$.
\item The estimate
\begin{equation}\label{quotient restriction lemma}
\big( \frac{1}{|\Sigma_{l}|} \sum_{\vec{\xi} \in \Sigma_{l}} |\hat{F}(\vec{\xi}\,)|^s\big)^{1/s} \leq \bar{C} \|F\|_{\ell^r([\Z/p^{l} \Z]^n)}
\end{equation}
holds for all $F \colon [\Z/p^{l} \Z]^n \to \C$.
\end{enumerate}
The constants appearing in all three inequalities are identical. 
\end{lemma}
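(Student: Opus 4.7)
The strategy is to prove (i) $\Leftrightarrow$ (ii) by direct computation using the $p$-adic uncertainty principle, and then to prove (i) $\Leftrightarrow$ (iii) using the correspondence maps $F_{k,l}$, $\widehat{F}_{l,k}$ from Lemmas \ref{norm identities} and \ref{commutative lemma}, after first reducing to the subclass $\mathscr{S}(\Q_p^n; 0, l)$ via a conditional averaging argument. Throughout the key simplifying feature is that if $f \in \mathscr{S}(\Q_p^n)$ is supported in $B_{p^l}(0) = p^{-l}\Z_p^n$, then the duality between the spaces $\mathscr{S}(\Q_p^n; k,l)$ and $\mathscr{S}(\hat{\Q}_p^n; l,k)$ forces $\hat{f}$ to be constant on every coset of $p^l \Z_p^n = B_{p^{-l}}(0)$.

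For (i) $\Leftrightarrow$ (ii), I will decompose the $p^{-l}$-neighbourhood as a disjoint union of tubes. Since $\Gamma$ is graph-parametrised with integer-coefficient polynomial components, one has $\pi_l \circ \Gamma = \Gamma \circ \pi_l$, so
\[
\mathcal{N}_{p^{-l}}(\Sigma) = \bigsqcup_{\vec{\omega} \in [\Z/p^{l}\Z]^d} T_{\vec{\omega}}, \qquad T_{\vec{\omega}} := \Gamma(\tilde{\omega}) + p^{l}\Z_p^n,
\]
where $\tilde{\omega} \in \Z_p^d$ is any lift of $\vec{\omega}$; each tube has Haar measure $p^{-ln}$, giving $|\mathcal{N}_{p^{-l}}(\Sigma)| = p^{-l(n-d)}$. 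Since $\hat{f}$ is constant on each $T_{\vec{\omega}}$ and $\omega \mapsto \hat{f}(\Gamma(\omega))$ is constant on cosets of $p^l\Z_p^d$, both $\|\hat{f}\|_{L^s(\mu)}^s$ and $\|\hat{f}\|_{L^s_{\mathrm{avg}}(\mathcal{N}_{p^{-l}}(\Sigma))}^s$ reduce (by direct arithmetic of the normalisations) to the common expression
\[
p^{-ld} \sum_{\vec{\omega}\in [\Z/p^{l}\Z]^d} |\hat{f}(\Gamma(\tilde{\omega}))|^s,
\]
so the inequalities \eqref{local restriction} and \eqref{restriction reformulation} are literally the same.

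For (i) $\Leftrightarrow$ (iii), I first treat $f \in \mathscr{S}(\Q_p^n; 0, l)$. Setting $F := F_{0,l}[f]$, Lemma \ref{commutative lemma} gives $\widehat{F} = \widehat{F}_{l,0}[\hat{f}]$, and the identity $\pi_l \circ \Gamma = \Gamma \circ \pi_l$ yields $\Sigma_l = \pi_l(\Sigma)$ together with $\widehat{F}(\Gamma(\vec{\omega})) = \hat{f}(\Gamma(\tilde{\omega}))$. Combining this with the explicit sum from the (i)$\Leftrightarrow$(ii) step and Lemma \ref{norm identities} at $k=0$ (where $\|F\|_{\ell^r} = \|f\|_{L^r}$) identifies \eqref{local restriction} with \eqref{quotient restriction lemma} for $F$, with the same constant. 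To upgrade to all $f \in \mathscr{S}(\Q_p^n)$ supported in $B_{p^l}(0)$, I define the conditional average $\tilde{f}(x) := \fint_{x+\Z_p^n} f(z)\,\ud z$, which lies in $\mathscr{S}(\Q_p^n; 0, l)$. For any $\xi \in \Sigma \subseteq \Z_p^n$ and $z \in \Z_p^n$ one has $z \cdot \xi \in \Z_p$, hence $e(z \cdot \xi) = 1$; a short change of variables then gives $\hat{\tilde{f}}(\xi) = \hat{f}(\xi)$ throughout $\Sigma$. Jensen's inequality supplies $\|\tilde{f}\|_{L^r(\Q_p^n)} \leq \|f\|_{L^r(\Q_p^n)}$, so the estimate \eqref{local restriction} for $\tilde{f}$ implies the same estimate for $f$ with the same constant. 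The reverse implication is trivial since $\mathscr{S}(\Q_p^n; 0, l)$ embeds into the ambient class.

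The main obstacle is arithmetic rather than conceptual: one must verify that all three inequalities hold with \emph{identical} constants, which demands careful tracking of the $p$-adic normalisation exponents appearing in $|\mathcal{N}_{p^{-l}}(\Sigma)|$, in $|\Sigma_l|$, in the Haar-versus-counting measure conversions of Lemma \ref{norm identities}, and in the Jensen step. The conspiracy of these factors to produce a clean equivalence is what makes the correspondence principle effective, and the non-archimedean structure (compared with the real case) is precisely what allows each of these normalisations to cancel exactly rather than only up to an absolute constant.
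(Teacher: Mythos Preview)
Your proof is correct, and it takes a route that is closely related to but noticeably more direct than the paper's.

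For (i) $\Leftrightarrow$ (ii), the paper establishes the averaging identity
\[
\fint_{\mathcal{N}_{p^{-l}}(\Sigma)} |\hat{f}(\eta)|^s \,\ud \eta = \int_{\Sigma} \fint_{B_{p^{-l}}(0)} |\hat{f}(\xi+\eta)|^s \,\ud\eta \,\ud\mu(\xi)
\]
and then argues each direction via modulation invariance and the reproducing formula $\hat f(\xi)=\fint_{B_{p^{-l}}(0)}\hat f(\xi+\eta)\,\ud\eta$, exactly as one would in the euclidean case. You instead exploit the fact that in the non-archimedean setting the uncertainty principle is \emph{exact}: for $f$ supported in $B_{p^l}(0)$ the transform $\hat f$ is genuinely constant on every $B_{p^{-l}}(0)$-coset, so the two left-hand sides are not merely comparable but identical. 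This is cleaner and makes the $p$-adic simplification transparent, though the paper's argument has the pedagogical virtue of paralleling the real-variable proof.

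For the link to (iii), the paper proves (ii) $\Leftrightarrow$ (iii) by showing that (ii) for $f\in\mathscr{S}(\Q_p^n;k,l)$ is equivalent, via the correspondence maps, to a rescaled discrete estimate on $[\Z/p^{k+l}\Z]^n$, and then reduces all $k$ to $k=0$ by the discrete averaging $F_l(\vec z)=\sum_{\vec x\equiv \vec z\bmod p^l}F(\vec x)$ together with H\"older. You instead prove (i) $\Leftrightarrow$ (iii), reducing general $f$ to $\mathscr{S}(\Q_p^n;0,l)$ by the $p$-adic conditional expectation $\tilde f(x)=\fint_{x+\Z_p^n}f$ together with Jensen. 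These are dual versions of the same idea: your $\tilde f$ is exactly the continuous analogue of the paper's $F_l$, and Jensen on the $p$-adic side plays the role of H\"older on the discrete side. Both preserve the constant $\bar C$ exactly.
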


Assuming Lemma \ref{reformulation}, Proposition \ref{equivalence} immediately follows.

\begin{proof}[Proof (of Proposition \ref{equivalence})] The (global) restriction estimate \eqref{restriction} is equivalent to the (local) estimates \eqref{local restriction} holding at each scale $l \in \N_0$ with the uniform choice of constant $\bar{C}$. The desired result now follows from Lemma \ref{reformulation}. 
\end{proof}

The equivalence i) $\Leftrightarrow$ ii) is the $p$-adic version of a well-known fact in the euclidean case (see, for instance, \cite{Tao2004}). The simple proof is postponed until the end of the section. On the other hand, the equivalence ii) $\Leftrightarrow$ iii) follows from the correspondence between $\mathscr{S}(\Q_p^n; k, l)$-functions and functions on the module $[\Z/p^{k+l}\Z]^n$, as developed in the previous subsection. 

\begin{proof}[Proof (of Lemma \ref{reformulation}, ii) $\Leftrightarrow$ iii))] It follows from \eqref{reformulated set correspondence} that $\chi_{\mathcal{N}_{p^{-{l}}}(\Sigma)} = \widehat{F}^{-1}_{l,0} [\chi_{\pi_{l}(\Sigma)}]$. More generally, the same argument yields
\begin{equation}\label{neighbourhood identity}
\chi_{\mathcal{N}_{p^{-l}}(\Sigma)} = \widehat{F}^{-1}_{l,k} [\chi_{\pi_{k+l}(p^k \Sigma)}] \qquad \textrm{for all $k \in \N_0$.}
\end{equation}

Fix $f \in \mathscr{S}(\Q_p^n; k,l)$ for some $k \in \N_0$ and observe \eqref{neighbourhood identity} together with Lemma \ref{commutative lemma} imply that
\begin{equation*}
\chi_{\mathcal{N}_{p^{-l}}(\Sigma)}\hat{f} = \widehat{F}^{-1}_{l,k} \big[\chi_{\pi_{k+l}(p^k \Sigma)}\widehat{F}_{l,k}[\hat{f}]\,\big] = \widehat{F}^{-1}_{l,k} \big[\chi_{\pi_{k+l}(p^k \Sigma)}(F_{k,l}[f])\;\widehat{}\;\big].
\end{equation*}
From this and Lemma \ref{norm identities} b) one deduces that
\begin{equation*}
\|\hat{f}\|_{L^{s}_{\mathrm{avg}}(\mathcal{N}_{p^{-l}}(\Sigma))}  =  \Big(\frac{1}{|\pi_{k+l}(p^k \Sigma)|}\sum_{\vec{\xi} \in\pi_{k+l}(p^k \Sigma)}|(F_{k,l}[f])\;\widehat{}\;(\vec{\xi}\,)|^s\Big)^{1/s}.
\end{equation*}
Combining these observations together with Lemma \ref{norm identities} a), it follows that the local restriction estimate \eqref{restriction reformulation} holds for all $f \in \mathscr{S}(\Q_p^n)$ supported in $B_{p^{l}}(0)$ if and only if for every $k \in \N_0$ the estimate
\begin{equation}\label{rescaled quotient restriction}
\Big(\frac{1}{|\pi_{k+l}(p^k \Sigma)|}\sum_{\vec{\xi} \in \pi_{k+l}(p^k \Sigma)}|
\hat{F}(\vec{\xi}\,)|^s\Big)^{1/s} \leq \bar{C} q^{kn/r'}\|F\|_{\ell^r([\Z/p^{k+l}\Z]^n)}
\end{equation}
holds for all $F \colon [\Z/p^{k+l}\Z]^n \to \C$. The $k=0$ case of the above inequality is precisely \eqref{quotient restriction lemma}. It therefore suffices to show that \eqref{quotient restriction lemma} implies \eqref{rescaled quotient restriction} holds for all $k \in \N_0$. Given $F \colon [\Z/p^{k+l}\Z]^n \to \C$ define the function $F_l \colon [\Z/p^{l}\Z]^n \to \C$ by
\begin{equation*}
F_l(\vec{z}\,) := \sum_{\substack{\vec{x} \in [\Z/p^{k+l}\Z]^n \\ \vec{x} \equiv \vec{z} \,\mathrm{mod}\, p^l}} F(\vec{x}\,) \qquad \textrm{for all $\vec{z} \in [\Z/p^{l}\Z]^n$}.
\end{equation*}
Consider the map $\delta_k \colon [\Z/p^{l}\Z]^n \to [\Z/p^{k+l}\Z]^n$ given by $\delta_k \vec{\xi} := [p^k\xi]$ where $\xi \in \Z^n$ is a choice of class representative for $\vec{\xi}$. One may readily check that this mapping is well-defined and restricts to a bijection from $\pi_{l}(\Sigma)$ to $\pi_{k+l}(p^k\Sigma)$. Furthermore, for any $\vec{\xi} \in [\Z/p^{l}\Z]^n$ it follows that 
\begin{align*}
\hat{F}(\delta_k\vec{\xi}\,) = \sum_{\vec{z} \in  [\Z/p^{l}\Z]^n} \sum_{\substack{\vec{x} \in [\Z/p^{k+l}\Z]^n \\ \vec{x} \equiv \vec{z} \,\mathrm{mod}\, p^l}} F(\vec{x}\,)e^{2 \pi i \vec{x} \cdot \vec{\xi}/p^l} = \hat{F}_l(\vec{\xi}\,)
\end{align*}
and, consequently, 
\begin{equation}\label{rescaled quotient restriction 1}
\frac{1}{|\pi_{k+l}(p^k \Sigma)|}\sum_{\vec{\xi} \in\pi_{k+l}(p^k \Sigma)} |\hat{F}(\vec{\xi}\,)|^s = \frac{1}{|\pi_l(\Sigma)|}\sum_{\vec{\xi} \in \pi_l(\Sigma)} |\hat{F}_l(\vec{\xi}\,)|^s.
\end{equation}
On the other hand, H\"older's inequality implies that 
\begin{equation}\label{rescaled quotient restriction 2}
\|F_l\|_{\ell^r([\Z/p^{l}\Z]^n)} \leq p^{kn/r'}\|F\|_{\ell^r([\Z/p^{k+l}\Z]^n)}.
\end{equation}
Thus, assuming \eqref{quotient restriction lemma} holds for the function $F_l$ and combining this estimate with \eqref{rescaled quotient restriction 1} and \eqref{rescaled quotient restriction 2}, one concludes that \eqref{rescaled quotient restriction} holds for the function $F$, as required.
\end{proof}

\begin{proof}[Proof (of Lemma \ref{reformulation}, i) $\Leftrightarrow$ ii))] The proof relies on the identity
\begin{equation}\label{reformulation 1}
 \fint_{\mathcal{N}_{p^{-l}}(\Sigma)}  |\hat{f}(\eta)|^s \, \ud \eta = \int_{\Sigma} \fint_{B_{p^{-l}}(0)} |\hat{f}(\xi + \eta)|^s \, \ud \eta\ud \mu(\xi), 
\end{equation}
valid for all $f \in \mathscr{S}(\Q_p^n)$. To prove \eqref{reformulation 1}, first observe that
\begin{equation*}
\mathcal{N}_{p^{-l}}(\Sigma) = \bigcup_{\vec{z} \in [\Z/p^{l}\Z]_*^d} \pi_l^{-1}\{\Gamma(\vec{z})\}.
\end{equation*}
This implies that $|\mathcal{N}_{p^{-l}}(\Sigma)| = p^{-l(n-d)}$ and
\begin{align*}
 \fint_{\mathcal{N}_{p^{-l}}(\Sigma)}  |\hat{f}(\eta)|^s \, \ud \eta &= p^{-ld} \sum_{\vec{z} \in  [\Z/p^{l}\Z]_*^d} \fint_{\pi_l^{-1}\{\Gamma(\vec{z})\}} |\hat{f}(\eta)|^s \, \ud \eta \\
&= p^{-ld} \sum_{\vec{z} \in  [\Z/p^{l}\Z]_*^d} \fint_{B_{p^{-l}}(\Gamma(\omega_{\vec{z}}))} |\hat{f}(\eta)|^s \, \ud \eta,
\end{align*}
where $\omega_{\vec{z}} \in \Z_p^n$ is an arbitrary choice of element of $\pi_l^{-1}\{\vec{z}\}$ for each $\vec{z} \in [\Z/p^l\Z]_*^d$. Averaging over all possible choices of the $\omega_{\vec{z}}$ one concludes that
\begin{equation*}
 \fint_{\mathcal{N}_{p^{-l}}(\Sigma)}  |\hat{f}(\eta)|^s \, \ud \eta = \sum_{\vec{z} \in  [\Z/p^{l}\Z]_*^d} \int_{\pi_l^{-1}\{\vec{z}\}} \fint_{B_{p^{-l}}(0)} |\hat{f}(\Gamma(\omega)+\eta)|^s \, \ud \eta \ud \omega,
\end{equation*} 
and \eqref{reformulation 1} immediately follows. 

Suppose that \eqref{local restriction} holds for all $f \in \mathscr{S}(\Q_p^n)$ with $\mathrm{supp}\,f \subseteq B_{p^l}(0)$. It follows from the modulation invariance of the $L^r$-norm that 
\begin{equation*}
\Big(\int_{\Sigma} \fint_{B_{p^{-l}}(0)} |\hat{f}(\xi + \eta)|^s \, \ud \eta\ud \mu(\xi) \Big)^{1/s} \leq \bar{C}  \|f\|_{L^r(B_{p^{l}}(0))}
\end{equation*}
for any such function $f$, and the identity \eqref{reformulation 1} immediately yields \eqref{restriction reformulation}. 

Conversely, suppose \eqref{restriction reformulation} holds for all $f \in \mathscr{S}(\Q_p^n)$ with $\mathrm{supp}\,f \subseteq B_{p^l}(0)$. If $f$ is of this type, then $f = f\chi_{B_{p^l}(0)}$, which leads to the reproducing formula 
\begin{equation*}
\hat{f}(\xi) = \hat{f} \ast \hat{\chi}_{B_{p^l}(0)}(\xi) = \fint_{B_{p^{-l}}(0)} \hat{f}(\xi + \eta)\,\ud \eta. 
\end{equation*}
Thus, by H\"older's inequality,
\begin{equation*}
\|\hat{f}|_{\Sigma}\|_{L^s(\mu)} \leq \Big(\int_{\Sigma} \fint_{B_{p^{-l}}(0)} |\hat{f}(\xi + \eta)|^s \, \ud \eta\ud \mu(\xi) \Big)^{1/s} 
\end{equation*}
and \eqref{local restriction} now follows from \eqref{reformulation 1}.
\end{proof}




\subsection{Restriction and Kakeya over local fields} The analysis of this section can be generalised to the setting of non-archimedean local fields. For brevity, the relevant definitions are not reviewed here; the interested reader may consult, for instance, \cite{Lang1984}, \cite{Lang1994} or \cite{Taibleson1975} for further information. Let $K$ be a field with a discrete non-archimedean absolute value $|\,\cdot\,|_K$, suppose $\pi \in K$ is a choice of uniformiser and let $\mathfrak{o}:= \{x \in K : |x|_K \leq 1\}$ denote the ring of integers of $K$. Assume that the residue class field $\mathfrak{o}/\pi \mathfrak{o}$ is finite. One may easily formulate versions of the restriction and Kakeya problems over the field $K$ or the quotient rings $\mathfrak{o}/\pi^{\alpha} \mathfrak{o}$. The resulting theories are then essentially equivalent via a correspondence principle which extends that described above. The details can be found in \cite{Hickman2015}. 

It is well-known that any field $K$ satisfying the above properties is isomorphic to either a finite extension of $\Q_p$ for some prime $p$ or the field $\mathbb{F}_q((X))$ of formal Laurent series over a finite field $\mathbb{F}_q$. The local fields $\mathbb{F}_q((X))$ are particularly well-behaved spaces which act as simplified models of Euclidean space. For instance, Fourier analysis over $\mathbb{F}_2((X))$ corresponds to the study of Fourier--Walsh series, which has played a prominent r\^ole as a model for problems related to Carleson's theorem and time-frequency analysis \cites{DiPlinio2014, Do2012}. Recently there has been increased interest in local field variants of other problems in Euclidean harmonic analysis and geometric measure theory, focusing on the Kakeya conjecture \cites{Ellenberg2010, Caruso, Dummit2013, Fraser2016}. In particular, in \cite{Ellenberg2010} it is shown that Dvir's \cite{Dvir2009} finite field Kakeya theorem can be used to prove strengthened bounds on the size of Kakeya sets over $\mathbb{F}_q((X))$. This simple observation stems from the fact that each quotient ring of  $\mathbb{F}_q((X))$ is a vector space over a finite field. It would be interesting to see whether it is possible to extend this result to the $p$-adic setting.




\section{\texorpdfstring{$\ell^2$}{l2} restriction in \texorpdfstring{$\Z/N\Z$}{ZNZ}}\label{L2 restriction section}




\subsection{An abstract restriction theorem} In this section a fairly abstract $\ell^2$ Fourier restriction estimate is established for general sets $\Sigma$ lying in $[\Z/N\Z]^n$, under certain dimensionality hypotheses. This result is then used to study various prototypical cases such as the paraboloid. In order to state the general form of the restriction theorem, it is necessary to revisit the scaling structure on $\Z/N\Z$ described earlier in the article. 

Recall the collection of balls $\{\mathcal{B}_d\}_{d \mid N}$ introduced in $\S$\ref{basic setup section}, given by 
\begin{equation*}
\mathcal{B}_d := \{\vec{x} \in [\Z/N\Z]^n : \|\vec{x}\| \preceq d \}.
\end{equation*}
It was noted in $\S$\ref{basic setup section} that when $N=p^{\alpha}$ is a power of a fixed prime $p$ these balls form a nested sequence. For general $N$ this property does not hold and it is therefore useful to consider the 1-parameter nested family of balls
\begin{equation*}
B_{\rho} (\vec{0}\,) \ := \ \bigcup_{d \mid N; d \le \rho} {\mathcal B}_d \qquad \textrm{for all $0 < \rho$.}
\end{equation*}
Note that the above union is taken over all divisors $d$ which are at most $\rho$ in the usual sense, whereas the inequality $\|\vec{x}\| \preceq d$ defining ${\mathcal B}_d$ is with respect to the division ordering. When $N = p^{\alpha}$ the sets ${\mathcal B}_d$ are already nested and $B_{\rho}(\vec{0}\,) = \mathcal{B}_{p^{\nu}}$ where $0 \leq \nu \leq \alpha$ is the largest value for which $p^{\nu} \leq \rho$ holds. The set system $B_{\rho}(\vec{0}\,)$ is extended to a translation invariant family on $[\Z/N\Z]^n$ by setting $B_{\rho} (\vec{x}\,) := \vec{x} + B_{\rho}(\vec{0}\,)$ for all $\vec{x} \in [\Z/N\Z]^n$ and $\rho > 0$.  

The term `balls' is used loosely here: the $B_{\rho}(\vec{x}\,)$ do not arise from a metric, or even a pseudo-metric. They do, however, satisfy the following properties:
\begin{enumerate}[i)]
\item Nesting: $B_{\rho}(\vec{0}\,) \subseteq B_{\rho'}(\vec{0}\,)$ for all $0 < \rho \leq \rho'$;
\item Symmetry: $B_{\rho}(\vec{0}\,) = -B_{\rho}(\vec{0}\,)$ for all $0 < \rho$;
\item Covering: $\bigcup_{\rho >0} B_{\rho}(\vec{0}\,) = [\Z/N\Z]^n$;
\item Translation invariance: $B_{\rho}(\vec{x}\,) = \vec{x} + B_{\rho}(\vec{0}\,)$ for all $\vec{x} \in [\Z/N\Z]^n$ and $0< \rho$.
\end{enumerate}
In addition, the balls satisfy a natural regularity condition with respect to the Haar (that is, counting) measure on $[\Z/N\Z]^n$. In particular, for all $\varepsilon > 0$ one has
\begin{flalign*}
&\textrm{(R)}\quad |B_{\rho}(\vec{0}\,)| \leq C_{\varepsilon} N^{\varepsilon}\rho^n \qquad \textrm{for all $0 < \rho$.}&
\end{flalign*}
Indeed, 
\begin{equation*}
|B_{\rho}(\vec{0}\,)| \ \le  \ \sum_{d \mid N; d \le \rho} |{\mathcal B}_d| \ = \
\sum_{d \mid N; d \le \rho} d^n \ \le \ \bigl[ \, \sum_{d \mid N} 1 \, \bigr] \ \rho^n
\end{equation*}
and (R) now follows from standard bounds for the divisor function. It is easy to see that when $N = p^{\alpha}$ the property (R) holds with a uniform constant (that is, without any $\varepsilon$-loss in $N$). 

The dual group $[\Z/N\Z]_*^n$ is also endowed with a family of balls $\widehat{B}_{\rho}(\vec{\xi}\,)$, which are naturally dual to the $B_{\rho}(\vec{x}\,)$. In particular, define
\begin{equation*}
\widehat{B}_{\rho}(\vec{0}\,) \ := \ \bigcup_{d \mid N; d\ge 1/\rho} \, {\mathcal B}_{N/d} \qquad \textrm{for all $0 < \rho$}
\end{equation*}
and let $\widehat{B}_{\rho}(\vec{\xi}\,) := \vec{\xi} + \widehat{B}_{\rho}(\vec{0}\,)$ for all $\vec{\xi} \in [\Z/N\Z]_*^n$ and $0 < \rho$. 

Having made these preliminary definitions, one may now state the abstract $\ell^2$ Fourier restriction theorem mentioned above. Fix $N \in \N$ and a set of frequencies $\Sigma \subseteq [\Z/N\Z]_*^n$. Mirroring the results in the Euclidean setting \cites{Bak2011, Mitsis2002, Mockenhaupt2000}, one assumes that the normalised counting measure on $\Sigma$ satisfies both a dimensional (or regularity) and Fourier-dimensional hypothesis; in particular, for some $0 < b \leq a < n$ assume that the following hold:
\begin{flalign*}
&\textrm{(R$\Sigma$)}\quad \frac{|\widehat{B}_{\rho}(\vec{\xi}\,) \cap \Sigma|}{|\Sigma|} \leq Ar^a \textrm{ for all $\vec{\xi} \in [\Z/N\Z]_*^n$;}&
\end{flalign*}
\begin{flalign*}
&\textrm{(F$\Sigma$)}\quad \big| \frac{1}{|\Sigma|} \sum_{\vec{\xi} \in \Sigma} e^{2\pi i \vec{x}\cdot\vec{\xi}/N} \big| \leq Br^{-b/2} \textrm{ for all $\vec{x} \notin B_r(\vec{0}\,)$.}&
\end{flalign*}
If $\mu$ denotes the normalised counting measure on $\Sigma$, then the above inequalities can be rewritten as:
\begin{flalign*}
&\textrm{(R$\mu$)}\quad \mu(\widehat{B}_r(\vec{\xi}\,)) \leq Ar^a \textrm{ for all $\vec{\xi} \in [\Z/N\Z]_*^n$;}&
\end{flalign*}
\begin{flalign*}
&\textrm{(F$\mu$)}\quad |\check{\mu}(\vec{x}\,)| \leq Br^{-b/2} \textrm{ for all $\vec{x} \notin B_r(\vec{0}\,)$.}&
\end{flalign*}
These conditions are therefore natural discrete analogues of those featured in \cites{Bak2011, Mitsis2002, Mockenhaupt2000}. 

With the various definitions now in place, the main theorem is as follows.

\begin{theorem}\label{L2-restriction} Fix $N \in \N$ and a set of frequencies $\Sigma \subseteq [\Z/N\Z]_*^n$ and suppose $\Sigma$ satisfies (R$\,\Sigma$) and (F$\,\Sigma$) for some $0 < b \leq a < n$ and $0 < A, B$. Then for all $\varepsilon >0$, there is a constant $C_{\varepsilon}$ such that the inequality
\begin{equation}\label{abstract Stein Tomas}
\Big(\frac{1}{|\Sigma|}\sum_{\vec{\xi}\in \Sigma}|\hat{F}(\vec{\xi}\,)|^2 \Big)^{1/2} \leq  C_{\varepsilon} A^{b/(4(n-a) + 2b)}B^{(n-a)/(2(n-a)+b)} N^{\varepsilon}\|F\|_{L^r([\Z/N\Z]^n)}
\end{equation}
holds for all $1 \leq r \leq r_0$ where 
\begin{equation}\label{exponent formula}
r_0 := \frac{4(n - a) + 2b}{4(n-a) + b}.
\end{equation}
\end{theorem}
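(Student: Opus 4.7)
The plan is to run the Stein--Tomas argument, adapted to the discrete setting, following the Mockenhaupt--Mitsis--Bak--Seeger template for measures satisfying joint dimensional and Fourier-dimensional conditions. Duality reformulates \eqref{abstract Stein Tomas} as the extension bound
\begin{equation*}
\|\mathcal{E} H\|_{\ell^{r'}([\Z/N\Z]^n)} \leq \bar{C}\, \|H\|_{L^2(\mu)}, \qquad \mathcal{E} H(\vec{x}\,) := \frac{1}{|\Sigma|}\sum_{\vec{\xi}\in\Sigma} H(\vec{\xi}\,) e^{2\pi i \vec{x}\cdot\vec{\xi}/N},
\end{equation*}
and the $TT^{*}$-identity $\mathcal{E}\mathcal{E}^{*}F = \check{\mu} * F$ reduces matters to the convolution estimate $\|\check{\mu} * F\|_{\ell^{r_{0}'}} \leq \bar{C}^{2}\|F\|_{\ell^{r_{0}}}$, where $r_{0}$ is the critical exponent in \eqref{exponent formula}. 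The range $1\leq r < r_{0}$ then follows from the $r=r_{0}$ case via the embedding $\ell^{r}\subseteq \ell^{r_{0}}$, since counting measure makes the $\ell^{p}$-norm nonincreasing in $p$.

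Next I would perform a dyadic decomposition $\check{\mu} = \sum_{j=0}^{J} K_{j}$ with $K_{j}=\check{\mu}\cdot\phi_{j}$, where $\phi_{j}$ is a sharp cutoff to an annular region of scale $2^{j}$ built from the nested balls $B_{\rho}(\vec{0}\,)$ of the present section, and $J = O(\log N)$ (large enough that $B_{2^{J}}=[\Z/N\Z]^{n}$). Set $T_{j} F = K_{j}*F$. The Fourier-dimensional hypothesis (F$\mu$) gives $|\check{\mu}(\vec{x}\,)| \leq B\cdot 2^{-(j-1)b/2}$ on the support of $\phi_{j}$, so $\|T_{j}\|_{\ell^{1}\to\ell^{\infty}} = \|K_{j}\|_{\infty} \leq C\,B\cdot 2^{-jb/2}$. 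For the $\ell^{2}\to\ell^{2}$ bound, Plancherel gives $\|T_{j}\|_{\ell^{2}\to\ell^{2}} = \|\hat{K}_{j}\|_{\infty}$, and
\begin{equation*}
\hat{K}_{j}(\vec{\xi}\,) = \frac{1}{|\Sigma|}\sum_{\vec{\eta}\in\Sigma}\hat{\phi}_{j}(\vec{\xi}-\vec{\eta}\,).
\end{equation*}
The peak of $\hat{\phi}_{j}$ is controlled by $|B_{2^{j}}| \leq C_{\varepsilon} N^{\varepsilon}2^{jn}$ via property (R), and its mass is essentially concentrated on the dual ball $\widehat{B}_{2^{-j}}$; combining this with the dimensional hypothesis (R$\mu$), which gives $\mu(\widehat{B}_{2^{-j}}(\vec{\xi}\,)) \leq A\cdot 2^{-ja}$, yields $\|T_{j}\|_{\ell^{2}\to\ell^{2}} \leq C_{\varepsilon} N^{\varepsilon} A\cdot 2^{j(n-a)}$.

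Riesz--Thorin interpolation between these two endpoints produces
\begin{equation*}
\|T_{j}\|_{\ell^{p_{\theta}}\to\ell^{p_{\theta}'}} \leq (B\cdot 2^{-jb/2})^{1-\theta}(C_{\varepsilon} N^{\varepsilon} A\cdot 2^{j(n-a)})^{\theta}, \qquad 1/p_{\theta} = 1-\theta/2.
\end{equation*}
Choosing $\theta_{0} = b/(2(n-a)+b)$ kills the $2^{j}$ exponent and gives $p_{\theta_{0}} = r_{0}$; summing the resulting $j$-uniform bound over the $O(\log N)$ dyadic scales costs a further $N^{\varepsilon}$ factor and yields $\|T\|_{\ell^{r_{0}}\to\ell^{r_{0}'}} \leq C_{\varepsilon} N^{\varepsilon} A^{\theta_{0}} B^{1-\theta_{0}}$. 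Taking square roots to undo $TT^{*}$ halves both exponents, recovering the constant $A^{b/(4(n-a)+2b)}B^{(n-a)/(2(n-a)+b)}$ of \eqref{abstract Stein Tomas}.

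The main obstacle is the $\ell^{2}\to\ell^{2}$ bound for $T_{j}$: the discrete setting forces sharp rather than smooth cutoffs $\phi_{j}$, and the balls $B_{\rho}(\vec{0}\,)$ are not subgroups of $[\Z/N\Z]^{n}$ for general radii $\rho$, so one must show carefully that $\hat{\phi}_{j}$ remains effectively concentrated on the dual annulus without the Schwartz-class decay available in the Euclidean theory. This is where property (R) enters in a crucial way, and both it and the summation over the $\log N$ dyadic scales account for the $N^{\varepsilon}$ loss in the final constant.
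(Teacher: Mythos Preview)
Your outline is the same as the paper's: the paper defers the abstract Stein--Tomas/Bak--Seeger argument to \cite{Hickman3} and isolates as the one nontrivial input exactly the estimate you flag, namely the effective Fourier concentration of the sharp cutoffs $\varphi_{\rho}=\chi_{B_{\rho}(\vec{0}\,)}$ (this is Proposition~\ref{Littlewood-Paley proposition}, labelled property~(F)). So the plan is right, and you have correctly located the only real difficulty.

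One point of confusion to correct: in your last paragraph you attribute the concentration of $\hat{\phi}_{j}$ on the dual ball to property~(R). That is not accurate. Property~(R) is just the volume bound $|B_{\rho}|\le C_{\varepsilon}N^{\varepsilon}\rho^{n}$, and it gives only the trivial peak estimate $\|\hat{\phi}_{j}\|_{\infty}\le \|\phi_{j}\|_{1}\le |B_{2^{j}}|$ that you already used one line earlier. The decay of $\hat{\varphi}_{\rho}(\vec{\xi}\,)$ for $-\vec{\xi}\notin \widehat{B}_{s}(\vec{0}\,)$ is a genuinely separate estimate: because $B_{\rho}(\vec{0}\,)=\bigcup_{d\mid N,\, d\le\rho}\mathcal{B}_{d}$ is a union of subgroups rather than a single subgroup, $\hat{\varphi}_{\rho}$ is a sum of (scaled) characteristic functions of dual subgroups, and one must control the possible constructive interference among these. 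The paper does this by writing $\hat{\varphi}_{\rho}$ as a divisor sum of exponential sums $S_{N,d}(\vec{\xi}\,)$, exploiting multiplicativity to reduce to prime-power moduli, and then carrying out a somewhat delicate case analysis on the $p$-adic valuations of the components of $\vec{\xi}$ (Lemma~\ref{gcd lemma} together with the argument of \S\ref{L2 restriction section}.2). This is where the work is, and it does not follow from~(R); you should either prove property~(F) directly or cite it as the missing ingredient.
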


Theorem \ref{L2-restriction} is, in fact, a special case of a more general result concerning $L^2$ Fourier restriction on locally compact abelian (LCA) groups. In particular, in \cite{Hickman3} it is observed that an argument of Bak and Seeger \cite{Bak2011} can be extended to a class of LCA groups which admit a primitive form of Littlewood--Paley theory. Unfortunately, the full details of the hypotheses of the main result in \cite{Hickman3} are somewhat involved and are therefore not reproduced here. In order to apply the result of \cite{Hickman3} in the current context, one considers a system of Littlewood--Paley projections defined with respect to the balls $B_{\rho}(\vec{x}\,)$ and $\widehat{B}_{\rho}(\vec{\xi}\,)$ introduced above. For each $\rho > 0$ let $\varphi_{\rho} := \chi_{B_{\rho}(\vec{0}\,)}$ denote the characteristic function of the ball $B_{\rho}(\vec{0}\,)$. For the purpose of the argument, one wishes to show that the projection operators $G \mapsto G \ast \hat{\varphi}_{\rho}$ (defined on the class of functions on the dual group $[\Z/N\Z]_*^n$) are well-behaved. Since $\|\varphi_{\rho}\|_{\ell^{\infty}([\Z/N\Z]^n)} \leq 1$, one immediately deduces the $\ell^2$-bound
\begin{equation*}
\|G \ast \hat{\varphi}_{\rho}\|_{\ell^2([\Z/N\Z]_*^n)} \leq \|G\|_{\ell^2([\Z/N\Z]_*^n)}
\end{equation*}
by Plancherel's theorem. On the other hand, favourable $\ell^1$-type bounds follow from pointwise estimates for the Fourier transform $\hat{\varphi}_{\rho}$.

\begin{proposition}\label{Littlewood-Paley proposition} For $\varphi_{\rho}$ as defined above, for all $\varepsilon >0$ there exists a constant $C_{\varepsilon} > 0$ such that the following condition holds:
\begin{flalign*}
&\textrm{(F)}\quad |\hat{\varphi}_{\rho}(\vec{\xi}\,)| \leq C_{\varepsilon}N^{\varepsilon}s^{-n} \textrm{ whenever $-\vec{\xi} \notin \widehat{B}_s(\vec{0}\,)$ and $s \geq 1/\rho$.}&
\end{flalign*}
\end{proposition}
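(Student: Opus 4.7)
The plan is to compute $\hat{\varphi}_\rho$ explicitly via a Möbius inversion argument, exploit the formula $\hat{\chi}_{\mathcal{B}_d}(\vec{\xi}\,) = d^n \chi_{\mathcal{B}_{N/d}}(\vec{\xi}\,)$ established earlier in the article, and then translate the hypothesis on $\vec{\xi}$ into a sharp divisibility constraint that forces only very short Fourier data to contribute. Since $B_\rho(\vec{0}\,)$ is a union of the balls $\mathcal{B}_d$ rather than a single coset of a subgroup, one cannot compute $\hat{\varphi}_\rho$ by a single application of the formula above; the Möbius decomposition is what circumvents this, and the cost will be a divisor bound responsible for the $N^\varepsilon$ loss.

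First, partition by level sets of the norm. Writing $\psi_d(\vec{x}\,) := \chi_{\{\|\vec{x}\,\| = d\}}(\vec{x}\,)$, one has $\varphi_\rho = \sum_{d \mid N,\, d \leq \rho} \psi_d$ and, since $\chi_{\mathcal{B}_e} = \sum_{f \mid e} \psi_f$ for every $e \mid N$, Möbius inversion on the divisor lattice yields $\psi_d = \sum_{e \mid d} \mu(d/e) \chi_{\mathcal{B}_e}$. Combining these and interchanging the order of summation,
\begin{equation*}
\hat{\varphi}_\rho(\vec{\xi}\,) = \sum_{e \mid N,\, e \leq \rho} M_\rho(e) \cdot e^n \cdot \chi_{\mathcal{B}_{N/e}}(\vec{\xi}\,), \qquad M_\rho(e) := \sum_{m \mid N/e,\, m \leq \rho/e} \mu(m).
\end{equation*}

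Next, translate the hypothesis. Let $\delta := N/\|\vec{\xi}\,\| = \gcd(\vec{\xi}, N)$, so that $\chi_{\mathcal{B}_{N/e}}(\vec{\xi}\,) = 1$ precisely when $e \mid \delta$. The condition $-\vec{\xi} \notin \widehat{B}_s(\vec{0}\,)$ asserts that no divisor of $N$ which is $\geq 1/s$ can divide $\delta$; applying this with the choice $d = \delta$ (which divides $N$ and divides itself) forces $\delta < 1/s$. Combined with $s \geq 1/\rho$, this shows that every $e \mid \delta$ already satisfies $e \leq \delta < \rho$, so the constraint $e \leq \rho$ in the displayed sum becomes automatic.

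Finally, bound the resulting sum. The trivial estimate $|M_\rho(e)| \leq \sum_{m \mid N/e} |\mu(m)| \leq \tau(N)$, together with the previous reduction, yields
\begin{equation*}
|\hat{\varphi}_\rho(\vec{\xi}\,)| \leq \tau(N) \sum_{e \mid \delta} e^n \leq \tau(N)\,\tau(\delta)\,\delta^n \leq \tau(N)^2\,\delta^n,
\end{equation*}
and the standard divisor bound $\tau(N) \leq C_{\varepsilon} N^{\varepsilon}$ combined with $\delta^n < s^{-n}$ gives the claim (after relabelling $\varepsilon$). The only real obstacle is the bookkeeping through the Möbius inversion and the verification that the hypothesis $-\vec{\xi} \notin \widehat{B}_s(\vec{0}\,)$ really does trap $\delta$ below $1/s$; the subsequent estimates are entirely elementary, and the $\varepsilon$-loss is essentially forced by the divisor function.
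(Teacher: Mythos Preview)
Your proof is correct and takes a genuinely different, more streamlined route than the paper. The paper decomposes $\varphi_\rho$ into the same level sets $\psi_d$ (written there as $S_{N,N/d}$), but then estimates each $\hat\psi_d$ by reducing to prime powers via multiplicativity and carrying out a detailed case analysis on the $p$-adic valuations of the coordinates $\xi_k$, together with an induction on the number of components (using Lemma~\ref{gcd lemma}). Your approach bypasses all of this by observing that $\psi_d$ is a M\"obius combination of the subgroup indicators $\chi_{\mathcal{B}_e}$, whose Fourier transforms are already known exactly. The cost is one extra factor of $\tau(N)$ in the final bound (you get $\tau(N)^2 s^{-n}$, the paper gets essentially $\tau(N) s^{-n}$), which is irrelevant here since it is absorbed into $N^\varepsilon$; the paper's direct computation also reveals precisely when each $S_{N,d}(\vec\xi)$ vanishes, but that information is not needed for the proposition. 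One small remark: the formula $\hat\chi_{\mathcal{B}_d} = d^n \chi_{\mathcal{B}_{N/d}}$ appears in the paper only in its $(n-1)$-dimensional form (in \S\ref{wave packet section}), but the $n$-dimensional version follows by the same elementary character-sum computation, so this is not an issue.
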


The main theorem of \cite{Hickman3} reduces the proof of Theorem \ref{L2-restriction} to establishing the condition (F).\interfootnotelinepenalty=10000\footnote{The hypotheses of the main theorem in \cite{Hickman3} also require a uniform $\ell^1$ bound for the functions $\hat{\varphi}_{\rho}$, which in the current context is the property that for all $\varepsilon >0$ there exists a constant $C_{\varepsilon} >0$ such that
\begin{flalign*}
&\textrm{(F$'$)}\quad \frac{1}{N^n}\sum_{\vec{\xi} \in [\Z/N\Z]_*^n}|\hat{\varphi}_{\rho}(\vec{\xi}\,)| \leq C_{\varepsilon}N^{\varepsilon} \textrm{ for all $0 < \rho$.}&
\end{flalign*}
However, in \cite{Hickman3}*{Lemma 3.2} it is shown that \eqref{abstract Stein Tomas} holds in the non-endpoint range $1 \leq r < r_0$ without the hypothesis (F$'$). Since here an $\varepsilon$-loss in $N$ is permitted in the constant, the non-endpoint range and endpoint range of inequalities are equivalent via H\"older's inequality.

If one wishes to apply the results of \cite{Hickman3} to study Problem \ref{p power problem}, then an $\varepsilon$-loss in the cardinality of the ring is no longer acceptable and condition (F$'$) must now also be verified (with a uniform constant appearing on the right-hand side). However, in this situation the functions $\hat{\varphi}_{\rho}$ admit a clean, explicit formula and the computations are substantially simpler: see \cite{Hickman3}*{$\S$2}. \nopagebreak} The proof of Proposition \ref{Littlewood-Paley proposition}, which is slightly involved, is given in the following subsection. Some consequences of Theorem \ref{L2-restriction} are then discussed. 




\subsection{The proof of Proposition \ref{Littlewood-Paley proposition}}

The proof of the proposition will make repeated use of the following elementary observation.

\begin{lemma}\label{gcd lemma}  For $p$ be prime and $m, L \in \N$ define
\begin{equation}\label{gcd}
I(m, p^L) := \#\bigl\{(x_1, \ldots, x_m) \in [{\mathbb Z}/p^L{\mathbb Z}]^m : \gcd(x_1, \ldots, x_m, p) = 1 \bigr\}.
\end{equation}
Then $I(m, p^L) =  p^{Lm} (1 - p^{-m})$.
\end{lemma}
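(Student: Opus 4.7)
The plan is to prove the identity by complementary counting. Since $p$ is prime, the condition $\gcd(x_1,\ldots,x_m,p)=1$ simply means that \emph{at least one} coordinate $x_j$ is not divisible by $p$; equivalently, the complementary set consists of those tuples in which \emph{every} coordinate is divisible by $p$.

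First I would count the complement. The elements of $\Z/p^L\Z$ divisible by $p$ are precisely the cosets represented by $0, p, 2p, \ldots, (p^{L-1}-1)p$, of which there are exactly $p^{L-1}$. Hence the number of tuples $(x_1,\ldots,x_m) \in [\Z/p^L\Z]^m$ with $p \mid x_j$ for every $j$ is $(p^{L-1})^m = p^{m(L-1)}$.

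Subtracting from the total, I obtain
\begin{equation*}
I(m,p^L) \;=\; p^{Lm} - p^{m(L-1)} \;=\; p^{Lm}\bigl(1 - p^{-m}\bigr),
\end{equation*}
which is the claimed formula. There is no real obstacle here: the argument is a one-line application of inclusion–exclusion (or equivalently, the fact that $p$ being prime makes the gcd condition reduce to a single congruence constraint), and the only ingredient needed is the elementary count of multiples of $p$ in $\Z/p^L\Z$.
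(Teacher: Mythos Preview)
Your proof is correct and is in fact more direct than the paper's. The paper proceeds by induction on $m$: it verifies the base case $m=1$ and then, assuming the formula for $I(m-1,p^L)$, splits the count for $I(m,p^L)$ according to whether $p\mid x_m$ or $p\nmid x_m$, obtaining
\[
I(m,p^L)=p^{L(m-1)}\cdot p^{L}(1-p^{-1})+I(m-1,p^L)\cdot p^{L-1},
\]
which simplifies to the stated formula. Your complementary-counting argument bypasses the induction entirely by observing that the complement of the defining condition is simply ``$p$ divides every coordinate'', a set of cardinality $(p^{L-1})^m$. This is shorter and arguably the natural approach; the paper's inductive decomposition gains nothing extra here, though it does mirror the case-splitting structure used later in the proof of Proposition~6.2.
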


\begin{proof} The case $m=1$ is readily verified. Let $m \geq 2$ and suppose, by way of induction hypothesis, that $I(m-1,p^L) =\ p^{L(m-1)} (1 - p^{-(m-1)})$. Clearly $I(m, p^L)$ can be expressed as the sum of
\begin{equation*}
\#\bigl\{(x_1, \ldots, x_m) \in  [{\mathbb Z}/p^L{\mathbb Z}]^{m} : p \nmid x_m \bigr\} = p^{L(m-1)}\cdot p^{L}(1 - p^{-1})
\end{equation*}
and
\begin{equation*}
 \#\bigl\{(x_1, \ldots, x_m) \in  [{\mathbb Z}/p^L{\mathbb Z}]^{m} : (x_1, \ldots, x_{m-1}, p) = 1; \, p \mid x_m \bigr\} = I(m-1,p^L)\cdot p^{L-1}.
\end{equation*}
Applying the induction hypothesis, it then follows that
\begin{equation*}
I(m, p^L) = p^{L(m-1)}\cdot p^{L}(1 - p^{-1}) + p^{L(m-1)} (1 - p^{-(m-1)}) \cdot p^{L-1} = p^L(1-p^{-m}),
\end{equation*}
which closes the induction and completes the proof. 
\end{proof}

\begin{proof}[Proof (of Proposition \ref{Littlewood-Paley proposition})] Given $\varepsilon > 0$, the problem is to show that there exists a constant $C_{\varepsilon} > 0$ such that
\begin{equation*}
\Bigl| \sum_{\vec{x} \in B_{\rho}(\vec{0}\,) } e^{2 \pi i \vec{\xi} \cdot \vec{x} / N} \Bigr|
\ \le \ C_{\varepsilon}N^{\varepsilon} \, s^{-n} \ \ \ \textrm{for all}  \\ -\vec{\xi} \notin \widehat{B}_s(\vec{0}\,), \ \ \textrm{whenever} \ s \geq 1/\rho.
\end{equation*}
Recalling the definition of $B_{\rho}(\vec{0}\,)$, the left-hand sum may be written as
\begin{equation*}
\sum_{d|N; d\ge N/{\rho}} \ S_{N,d}(\vec{\xi}\,)
\end{equation*}
where
\begin{equation*}
S_{N,d}(\vec{\xi}\,) := \sum_{\substack{\vec{x} \in [\Z/N\Z]^n \\ \gcd(x_1,\ldots, x_n, N) = d}} e^{2 \pi i \vec{\xi} \cdot \vec{x} / N}.
\end{equation*} 
By the divisor bound, it suffices to show that for a fixed divisor $d|N$ with $d\ge N/\rho$ one has
\begin{equation}\label{Phi-d}
| S_{N,d}(\vec{\xi}\,)| \le  C s^{-n} \quad \textrm{for all} \ \  -\vec{\xi} \notin \widehat{B}_s(\vec{0}\,), \ \ {\rm whenever} \ s\geq 1/\rho.
\end{equation}
The inequality \eqref{Phi-d} is trivial when $d = N$ and so one may assume that $d \prec N$ is a proper divisor of $N$.

By rescaling it follows that $S_{N, d}(\vec{\xi}\,) = S_{N/d, 1}(\vec{\xi}\,)$. Furthermore, it is not difficult to see that $M \mapsto S_{M,1}(\vec{\xi}\,)$ is a multiplicative function and so, writing $N/d = p_1^{L_1} \cdots p_r^{L_r}$ where $p_1, \dots, p_r$ are distinct primes, it follows that 
\begin{equation}\label{multiplicative}
S_{N,d}(\vec{\xi}\,) \ = \ \prod_{t=1}^r S_{p_t^{L_t},1}(\vec{\xi}\,).
\end{equation}

Let $\vec{\xi} = (\xi_1, \dots, \xi_n) \in [\Z/N\Z]_*^n$ and suppose that there exists some $1\le t \le r$ and $1\le k \le n$ such that $p_t^{L_t -1} \nmid \xi_k$. In this case it follows that $S_{N,d}(\vec{\xi}\,) = 0$. To see this, write $S_{p_t^{L_t},1}(\vec{\xi}\,) = \RN{1} + \RN{2}$ where
\begin{equation*}
\RN{1}  :=  \sum_{\substack{(x_1,\dots \widehat{x_k} \dots, x_n) \in [\Z/p_t^{L_t}\Z]^{n-1} \\ \gcd(x_1, \dots \widehat{x_k} \dots, x_n, p_t) = 1}} \prod_{\substack{1 \leq l \leq n \\ l \neq k}}e^{2\pi i x_l \xi_l /p_t^{L_t}}  \times 
\sum_{\substack{0 \leq x_k \leq p_t^{L_t}-1 \\ p_t \mid x_k}} e^{2\pi i x_k \xi_k / p_t^{L_t}}
\end{equation*}
and
\begin{equation*}
\RN{2} := \sum_{(x_1,\dots \widehat{x_k} \dots, x_n) \in [\Z/p_t^{L_t}\Z]^{n-1}} \prod_{\substack{1 \leq l \leq n \\ l \neq k}} e^{2\pi i x_l \xi_l /p_t^{L_t}} \times \sum_{\substack{0 \leq x_k \leq p_t^{L_t}-1 \\ p_t \nmid x_k}} e^{2\pi i x_k \xi_k/ p_t^{L_t}}.
\end{equation*}
Here the notation $\widehat{x_k}$ is used to denote omission. Since $p_t^{L_t -1} \nmid \xi_k$, it follows that
\begin{equation}\label{II}
\sum_{\substack{0 \leq x_k \leq p_t^{L_t}-1 \\ p_t \mid x_k}} e^{2\pi i x_k \xi_k / p_t^{L_t}} = \sum_{x_k=0}^{p_t^{L_t -1} -1} e^{2\pi i x_k\xi_k / p_t^{L_t -1}} = 0,
\end{equation}
implying that $\RN{1} = 0$. On the other hand, 
\begin{equation*}
\sum_{\substack{0 \leq x_k \leq p_t^{L_t}-1 \\ p_t \nmid x_k}} e^{2\pi i x_k \xi_k/ p_t^{L_t}} = \sum_{x_k = 0}^{p_t^{L_t} - 1} e^{2\pi i x_k \xi_k/ p_t^{L_t}} - 
\sum_{\substack{0 \leq x_k \leq p_t^{L_t}-1 \\ p_t \mid x_k}} e^{2\pi i x_k \xi_k / p_t^{L_t}}.
\end{equation*}
Since $p_t^{L_t} \nmid \xi_k$, the first sum on the right-hand side is 0, whilst the second sum is 0 by \eqref{II}. Thus, $\RN{2} = 0$, and so $S_{N,d}(\vec{\xi}\,) = 0$ in this case.

Next suppose that $p_t^{L_t -1} | \, \xi_k$ for all $1\le t \le r$ and all $1\le k \le n$. Split the prime factors of $N/d$ into two sets by defining
\begin{equation*}
A := \{1\le t \le r: p_t^{L_t} | \, \xi_k \ \textrm{ for } 1 \leq  k \leq n \} \quad \textrm{and} \quad B := \{1,\ldots, r\} \setminus A.
\end{equation*}
The hypotheses on $\vec{\xi}$ and the definition of $A$ now imply that $\vec{\xi}\in \mathcal{B}_{N/M}$ where
\begin{equation*}
M := \prod_{t\in A} p_t^{L_t} \prod_{t\in B} p_t^{L_t -1}.
\end{equation*}
On the other hand, if one assumes that $-\vec{\xi} \notin \widehat{B}_s(\vec{0})$, then, by definition, $\vec{\xi} \notin \mathcal{B}_{N/d}$ for all $d \ge 1/s$. Combining these observations, one deduces the inequality 
\begin{equation}\label{M bound}
M \le 1/s.
\end{equation}
It therefore suffices to show that
\begin{equation}\label{A index bound}
|S_{p_t^{L_t}, 1}(\vec{\xi}\,)| \leq p_t^{nL_t} \quad \textrm{for all $t \in A$}
\end{equation}
and
\begin{equation}\label{B index bound}
|S_{p_t^{L_t}, 1}(\vec{\xi}\,)| \leq p_t^{n(L_t-1)} \quad \textrm{for all $t \in B$.}
\end{equation}
Indeed, combining these estimates with \eqref{multiplicative} and \eqref{M bound} yields
\begin{equation*}
|S_{N,d}(\vec{\xi}\,)| \leq \prod_{t \in A} p_t^{nL_t}  \prod_{t \in B} p_t^{n(L_t-1)} =  M^n \leq s^{-n} \quad \textrm{for all $-\vec{\xi} \notin \widehat{B}_s(\vec{0}\,)$,}
\end{equation*}
as required. 

Observe that $S_{p_t^{L_t}, 1}(\vec{\xi}\,) = I(n, p_t^{L_t})$ for any $t \in A$, where $I(n, p_t^{L_t})$ is as defined in \eqref{gcd}. In this case, Lemma \ref{gcd lemma} implies that  
\begin{equation*}
S_{p_t^{L_t},1}(\vec{\xi}\,) = p_t^{n L_t}(1- p_t^{-n}) \le p_t^{n L_t},
\end{equation*}
which establishes \eqref{A index bound}.

It remains to verify \eqref{B index bound}. Fix $t\in B$ and assume, without loss of generality, that the components of $\vec{\xi} = (\xi_1,\ldots, \xi_n)$ are ordered so that there exists some $1\le k_0\le n$ satisfying $p_t^{L_t} \mid  \xi_1,\ldots, \xi_{n-k_0}$ and $p_t^{L_t} \nmid \xi_{n-k_0 +1}, \ldots, \xi_n$. Arguing by induction, it follows that for all $0 \leq k \leq k_0-1$ the identity
\begin{equation}\label{B index induction}
S_{p_t^{L_t},1}(\vec{\xi}\,) =  p_t^{k(L_t-1)} \sum_{\substack{(x_1, \dots, x_{n-k}) \in [\Z/p_t^{L_t}\Z]^{n-k} \\ \gcd(x_1,\ldots, x_{n-k}, p_t) = 1}} \prod_{l = 1}^{n-k} e^{2\pi i x_l\xi_l /p_t^{L_t}}
\end{equation}
holds. Indeed, when $k = 0$ this is vacuous. Assume that \eqref{B index induction} is valid for some $0 \leq k \leq k_0 -2$ and decompose the sum appearing on the right-hand side of \eqref{B index induction} into two terms $\RN{1}_{k+1} + \RN{2}_{k+1}$ where
\begin{equation*}
\RN{1}_{k+1} := \sum_{\substack{(x_1, \dots, x_{n-k-1}) \in [\Z/p_t^{L_t}\Z]^{n-k-1} \\ \gcd(x_1,\ldots, x_{n-k-1}, p_t) = 1}} \prod_{l = 1}^{n- k -1} e^{2\pi i x_l\xi_l /p_t^{L_t}} \times \sum_{\substack{0 \leq x_{n-k} \leq p_t^{L_t} -1  \\ p_t \mid x_{n-k} }} e^{2\pi i x_{n-k}\xi_{n-k} /p_t^{L_t}}
\end{equation*}
and
\begin{equation*}
\RN{2}_{k+1} := \sum_{(x_1, \dots, x_{n-k-1}) \in [\Z/p_t^{L_t}\Z]^{n-k-1}} \prod_{l = 1}^{n-k-1} e^{2\pi i x_l\xi_l /p_t^{L_t}} \times \sum_{\substack{0 \leq x_{n-k} \leq p_t^{L_t} - 1 \\ p_t \nmid x_{n-k} }} e^{2\pi i x_{n-k}\xi_{n-k} /p_t^{L_t}}.
\end{equation*}
Recall that, by hypothesis, $p_t^{L_t -1} \mid \xi_{n-k}$, from which one deduces that
\begin{equation*}
\RN{1}_{k+1} = p_t^{L_t -1}\sum_{\substack{(x_1, \dots, x_{n-k-1}) \in [\Z/p_t^{L_t}\Z]^{n-k-1} \\ \gcd(x_1,\ldots, x_{n-k-1}, p_t) = 1}} \prod_{l = 1}^{n- k -1} e^{2\pi i x_l\xi_l /p_t^{L_t}}.
\end{equation*}
On the other hand, since the choice of $k$ and definition of $k_0$ ensure that $p \nmid \xi_{n-k-1}$, it immediately follows that $\RN{2}_{k+1} = 0$. Combining these observations establishes the inductive step and, in particular, verifies \eqref{B index induction} for $k = k_0 -1$. 

Finally, repeat the preceding decomposition to arrive at the identity
\begin{equation*}
S_{p_t^{L_t},1}(\vec{\xi}\,) =  p_t^{(k_0 - 1)(L_t-1)}\big( \RN{1}_{k_0} + \RN{2}_{k_0} \big), 
\end{equation*}
where $\RN{1}_{k_0}$ and $\RN{2}_{k_0}$ are as defined above. Applying Lemma \ref{gcd lemma}, it is easy to verify that
\begin{equation*}
\RN{1}_{k_0} = p_t^{L_t -1} I(n-k_0, p_t^{L_t}) = p_t^{L_t -1}\cdot p_t^{L_t(n-k_0)}(1 - p^{-(n-k_0)}),
\end{equation*}
whilst 
\begin{equation*}
\RN{2}_{k_0} = - p_t^{L_t(n-k_0)} p_t^{L_t-1}.
\end{equation*}
Together these identities yield \eqref{B index bound}.

\end{proof}




\subsection{\texorpdfstring{$\ell^2$}{l2} restriction for the paraboloid: the proof of Theorem \ref{Tomas}} One is now in a position to employ the $\ell^2$ restriction theorem, Theorem \ref{L2-restriction}, whenever one has a set of frequencies $\Sigma$ in the dual group $[\Z/N\Z]_*^n$ satisfying the regularity condition (R$\Sigma$) and the Fourier decay estimate (F$\Sigma$) (or, equivalently, the normalised counting measure on $\Sigma$ satisfies (R$\mu$) and (F$\mu$)). For simplicity, first consider the prototypical case where $\Sigma$ is the paraboloid \eqref{paraboloid definition}. 

Let $\mu$ denote the normalised counting measure on $\Sigma$. In this case one may easily verify that (R$\mu$) holds with $a = n-1$. Indeed, if $\vec{\xi} \in [\Z/N\Z]_*^n$ and $0 < \rho$, then 
\begin{equation*}
\mu(\widehat{B}_{\rho}(\vec{\xi}\,)) \le \sum_{d \mid N; d\geq 1/\rho} \mu(\vec{\xi} + {\mathcal B}_{N/d})  \le \sum_{d \mid N; d\geq 1/\rho} d^{-(n-1)} \le  \bigl(\sum_{d \mid N} 1  \bigr) \, \rho^{n-1},
\end{equation*}
and the assertion now follows from the divisor bound.

It remains to establish the Fourier decay condition (F$\mu$) for a favourable choice of parameters $B$ and $b$, which requires the estimation of the exponential sum
\begin{equation*}
\check{\mu}(\vec{x}) \ = \ \frac{1}{N^{n-1}} \sum_{\vec{\omega} \in [\Z/N\Z]_*^{n-1}} e^{2 \pi i (x_1 \omega_1 + \cdots + x_{n-1} \omega_{n-1} + x_n (\omega_1^2 + \cdots + \omega_{n-1}^2))/N} .
\end{equation*}
As shown in $\S$\ref{basic setup section}, the above expression can be written as a product of Gauss sums $\check{\mu}(\vec{x}) = \prod_{j=1}^{n-1} G_N (x_j, x_n)$. Recalling \eqref{evaluating Gauss sum}, this vanishes unless $\gcd(x_1,\ldots,x_n, N) = \gcd(x_n,N)$ (or, using the established notation, $\|\vec{x}\| = |x_n|$), in which case, assuming that $N$ is odd,
\begin{equation*}
|\check{\mu}(\vec{x})| \ \le \ |x_n|^{-\frac{n-1}{2}} \ = \, \|\vec{x}\|^{-\frac{n-1}{2}} .
\end{equation*}
If $\vec{x} \notin B_{\rho}(\vec{0}\,)$, then $\vec{x} \notin {\mathcal B}_d$ for all divisors $d$ of $N$ satisfying $d\le \rho$. This implies that $\rho\le N/d' = \|\vec{x}\|$ where $d' = \gcd(x_1,\ldots,x_n, N)$. Therefore,
\begin{equation*}
|\check{\mu}(\vec{x})| \ \le  \rho^{-(n-1)/2} \quad {\rm whenever} \quad \vec{x} \notin B_{\rho}(\vec{0}\,),
\end{equation*}
showing that (F$\mu$) holds with $B = 1$ and $b = n-1$. 

Appealing to Theorem \ref{L2-restriction} now completes the proof of Theorem \ref{Tomas} with the precise meaning given in \eqref{general N problem inequality}. Explicitly, one has the following result. 

\begin{theorem}\label{rigorous Tomas} For all $\varepsilon > 0$ there exists a constant $C_{r,\varepsilon} > 0$ such that the estimate
\begin{equation*}
\Bigl(\frac{1}{N^{n-1}} \sum_{\vec{\omega} \in [\Z/N\Z]_*^{n-1}} |{\hat F}(\vec{\omega}, \omega_1^2 + \cdots + \omega_{n-1}^2)|^2 \Bigr)^{1/2} \ \le \ C_{r,\varepsilon} N^{\varepsilon} \, \|F\|_{\ell^r([\Z/N\Z]^n)}
\end{equation*}
holds for all odd $N \geq 1$ if and only if $1 \leq r \leq 2(n+1)/(n+3)$. 
\end{theorem}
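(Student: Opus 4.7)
The plan is to obtain sufficiency (the range $1 \leq r \leq 2(n+1)/(n+3)$) by applying the abstract $\ell^2$ restriction theorem (Theorem \ref{L2-restriction}) to the paraboloid $\Sigma$ of \eqref{paraboloid definition}, with $\mu$ the normalised counting measure on $\Sigma$. Specifically, I will verify (R$\mu$) with $a = n-1$ and (F$\mu$) with $b = n-1$ and $B = 1$. The exponent formula \eqref{exponent formula} then evaluates to
\begin{equation*}
r_0 = \frac{4(n-a) + 2b}{4(n-a) + b} = \frac{4 + 2(n-1)}{4 + (n-1)} = \frac{2(n+1)}{n+3},
\end{equation*}
yielding exactly the claimed range, with the $N^\varepsilon$ loss entering naturally via the divisor bound.

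The regularity condition (R$\mu$) is straightforward: since $\Sigma$ is parametrised over $[\Z/N\Z]_*^{n-1}$ and $\mu$ is the pushforward of normalised counting measure, for any $\vec\xi \in [\Z/N\Z]_*^n$ and $\rho > 0$,
\begin{equation*}
\mu(\widehat{B}_\rho(\vec\xi)) \leq \sum_{d \mid N,\, d \geq 1/\rho} \mu(\vec\xi + \mathcal B_{N/d}) \leq \sum_{d \mid N,\, d \geq 1/\rho} d^{-(n-1)} \leq \Bigl(\sum_{d \mid N} 1\Bigr) \rho^{n-1},
\end{equation*}
and the standard divisor bound absorbs $\sum_{d \mid N} 1$ into a factor $C_\varepsilon N^\varepsilon$. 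For (F$\mu$) I would exploit the factorisation $\check\mu(\vec x) = \prod_{j=1}^{n-1} G_N(x_j, x_n)$ recorded in $\S$\ref{basic setup section} together with the Gauss sum evaluation \eqref{evaluating Gauss sum}. The latter forces $\check\mu(\vec x)$ to vanish unless $\gcd(x_n,N) \mid x_j$ for each $j$ (equivalently $\|\vec x\| = |x_n|$), in which case, for odd $N$, one obtains $|\check\mu(\vec x)| \leq |x_n|^{-(n-1)/2} = \|\vec x\|^{-(n-1)/2}$. Since $\|\vec x\|$ is itself a divisor of $N$, unravelling the definition $B_\rho(\vec 0) = \bigcup_{d \mid N,\, d \leq \rho} \mathcal B_d$ shows that $\vec x \notin B_\rho(\vec 0)$ forces $\|\vec x\| > \rho$, delivering the required decay $|\check\mu(\vec x)| \leq \rho^{-(n-1)/2}$.

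For the necessary direction, I would appeal to the Knapp-type example already worked out at the end of $\S$\ref{basic setup section}: testing \eqref{general N problem inequality} against $F$ with $\hat F = \chi_\theta$ for a parabolic rectangle $\theta$ at scale $d$ (with $d^2 \mid N$) reduces the inequality to
\begin{equation*}
d^{-(n-1)/s} \leq C_\varepsilon N^\varepsilon d^{-(n+1)/r'},
\end{equation*}
and letting $d$ and $N$ grow forces $s(n+1)/(n-1) \leq r'$. Setting $s = 2$ yields $r \leq 2(n+1)/(n+3)$, matching the positive range. The substantive work of the whole argument is really concentrated in Theorem \ref{L2-restriction} and the pointwise bound of Proposition \ref{Littlewood-Paley proposition}, both of which are already established; the main obstacle at this stage is just the careful bookkeeping needed to confirm that the parameters $a$, $b$, $A$, $B$ take exactly the values indicated and that the odd-$N$ hypothesis propagates correctly through the Gauss sum computation.
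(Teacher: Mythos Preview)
Your proposal is correct and follows essentially the same route as the paper: verify (R$\mu$) with $a=n-1$ via the divisor bound, verify (F$\mu$) with $b=n-1$ and $B=1$ via the Gauss sum factorisation and \eqref{evaluating Gauss sum}, then invoke Theorem \ref{L2-restriction} for sufficiency and the Knapp example from \S\ref{basic setup section} for necessity. The bookkeeping you flag is exactly what the paper carries out in \S6.3, with no additional ingredients.
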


It is remarked that in the finite field setting Theorem \ref{rigorous Tomas} is far from sharp. In $\S$\ref{basic setup section} it was observed that necessarily $r' \ge 2n/(n-1)$ but Theorem \ref{rigorous Tomas} only gives a positive result\footnote{Strictly speaking, one needs to be slightly careful when running the above argument in the finite field setting to ensure that the various constants are independent of the cardinality of the field. See \cite{Mockenhaupt2004} or \cite{Hickman3} for details.} for $r' \geq 2(n+1)/(n-1)$; see \cites{Iosevich, Iosevich2008, Iosevich2010, Iosevich2010a, Koh2014, Lewko2012, Lewko, Lewko2015, Mockenhaupt2004} for further improvements in the finite field setting.

Finally, the above arguments simplify when one restricts $N$ to prime powers. In particular, in this setting a stronger version of Theorem \ref{rigorous Tomas} holds.

\begin{theorem}[\cite{Hickman3}]\label{prime power rigorous Tomas} There exists a constant $C_r > 0$ such that the estimate
\begin{equation*}
\Bigl(\frac{1}{p^{\alpha(n-1)}} \sum_{\vec{\omega} \in [\Z/p^{\alpha}\Z]_*^{n-1}} |{\hat F}(\vec{\omega}, \omega_1^2 + \cdots + \omega_{n-1}^2)|^2 \Bigr)^{1/2} \le C_r \, \|F\|_{\ell^r([\Z/p^{\alpha}\Z]^n)}
\end{equation*}
holds for all odd primes $p$ and all $\alpha \in \N$ if and only if $1 \leq r \leq 2(n+1)/(n+3)$. 
\end{theorem}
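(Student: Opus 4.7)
The plan is to parallel the proof of Theorem \ref{rigorous Tomas}, exploiting the fact that over $\Z/p^\alpha\Z$ the scaling structure collapses to a single parameter: the divisors of $p^\alpha$ are $\{1, p, p^2, \ldots, p^\alpha\}$ and so the balls $\mathcal{B}_{p^\beta}$ are already nested. Consequently, every divisor sum one encounters reduces to a geometric series and the $\varepsilon$-losses that were unavoidable for general $N$ (coming from the divisor bound $\sum_{d\mid N} 1 \leq C_\varepsilon N^\varepsilon$) disappear, leaving constants that depend only on $r$.

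The necessity of $1\le r \le 2(n+1)/(n+3)$ is already contained in the Knapp-type analysis of \S\ref{basic setup section}: the parabolic rectangle argument there works verbatim over $\Z/p^\alpha\Z$ since one can always take $d = p^{\lfloor\alpha/2\rfloor}$, whose square divides $p^\alpha$. For the positive direction, I would invoke the LCA-group restriction theorem from \cite{Hickman3} alluded to in the discussion following Theorem \ref{L2-restriction}, but applied with the \emph{prime-power} version of the hypotheses, in which (F) and (F$'$) are required to hold with constants independent of $p$ and $\alpha$. The preceding subsection already carries out the bulk of the work for general $N$; what remains is to rerun the three verifications without picking up any factor $N^\varepsilon$.

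First, the regularity of the paraboloid measure $\mu$: since the balls in $[\Z/p^\alpha\Z]_*^n$ are nested, $\widehat{B}_\rho(\vec 0) = \mathcal{B}_{N/d}$ where $d$ is the largest divisor $\leq 1/\rho$, and one obtains directly $\mu(\widehat{B}_\rho(\vec\xi\,)) \leq \rho^{n-1}$, so (R$\mu$) holds with $a = n-1$ and $A = 1$. Second, the Fourier decay of $\mu$: the Gauss sum identity \eqref{evaluating Gauss sum} is sharp for odd $p$ and yields $|\check\mu(\vec x\,)| \leq \|\vec x\,\|^{-(n-1)/2}$ on its support with no loss, so (F$\mu$) holds with $b = n-1$ and $B = 1$. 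Plugging $a = b = n-1$ into the exponent formula \eqref{exponent formula} gives $r_0 = 2(n+1)/(n+3)$, as required. Third, the Littlewood--Paley condition (F) of Proposition \ref{Littlewood-Paley proposition}: in the prime-power case $\varphi_\rho = \chi_{\mathcal{B}_{p^\nu}}$ for a single $\nu$, so $\hat{\varphi}_\rho$ is simply a scaled indicator of the dual ball $\mathcal{B}_{p^{\alpha-\nu}}$ (this is the elementary computation already exploited in the proof of Lemma \ref{wave packet decomposition lemma}); both (F) and the companion $\ell^1$ bound (F$'$) follow immediately with absolute constants.

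The only genuine obstacle is making sure the LCA machinery of \cite{Hickman3} actually delivers a $p,\alpha$-uniform constant once it is fed $p,\alpha$-uniform inputs; this is precisely the content of the remark in the footnote following Proposition \ref{Littlewood-Paley proposition}, and so the argument is a matter of invoking the cited result rather than reproving it. Once these verifications are assembled, Theorem \ref{prime power rigorous Tomas} follows from the abstract machinery exactly as Theorem \ref{rigorous Tomas} did, but now with no $N^\varepsilon$ factor.
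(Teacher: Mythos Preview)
Your proposal is correct and matches the paper's approach: the paper does not give a self-contained proof of this theorem but simply cites \cite{Hickman3}, noting (in the text and in the footnote after Proposition~\ref{Littlewood-Paley proposition}) that in the prime-power case the balls are nested, $\hat{\varphi}_\rho$ admits a clean explicit formula, and the conditions (R$\mu$), (F$\mu$), (F), (F$'$) all hold with uniform constants---precisely the verifications you outline. One minor slip: in your description of $\widehat{B}_\rho(\vec 0\,)$ you write ``largest divisor $\leq 1/\rho$'' where it should be the smallest divisor $\geq 1/\rho$, but this does not affect the argument.
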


This theorem appears in \cite{Hickman3} as a simple application of the aforementioned abstract $L^2$ restriction result for LCA groups. The uniformity of the constant in Theorem \ref{prime power rigorous Tomas} is consistent with the formulation of the restriction problem described in  Problem \ref{p power problem}.




\subsection{\texorpdfstring{$\ell^2$}{l2} restriction for the moment curve}

As in the euclidean setting, $\ell^2$ restriction arguments based only on the isotropic decay of the Fourier transform of $\mu$ will not give sharp results outwith the setting of hypersurfaces $\Sigma$. To illustrate this (and to initiate a discussion for $\S$\ref{Fourier restriction for curves section}), consider the case where $\Sigma$ is the moment curve, given by 
\begin{equation*}
\Sigma := \{(t,t^2,\ldots,t^n) : \, t\in [\Z/N\Z]_* \}.
\end{equation*}
The normalised counting measure $\mu$ on $\Sigma$ in this case has Fourier transform
\begin{equation*}
\check{\mu}(\vec{x}) \ = \ \frac{1}{N} \sum_{t=0}^{N-1} e^{2 \pi i (x_1 t + x_2 t^2 + \cdots + x_n t^n)/N}.
\end{equation*}
This exponential sum has been thoroughly studied by number theorists, beginning with Hua's \cite{Hua1940} classical estimate $|\check{\mu}(\vec{x})| \le C_{\varepsilon,n} N^{\varepsilon} \|\vec{x}\|^{-1/n}$ (using the notation of the present article), and improved so that there is no epsilon loss; for example, in \cite{Chen1977} it was shown that $|\check{\mu}(\vec{x})| \le B_{n} \|\vec{x}\|^{-1/n}$ for a constant $B_n$ depending only on the degree $n$ of the phase in the exponential sum (in fact $B_n := e^{4n}$ suffices for $n\ge 10$). Therefore, arguing as above, one verifies that in this case (F$\mu$) holds with $B = B_n$ and $b = 2/n$ and for any $\varepsilon >0$ the condition (R$\mu$) holds with $A = C_{\varepsilon}N^{\varepsilon}$ and $a = 1$. Thus, Theorem \ref{L2-restriction} gives an $\ell^2$ restriction estimate for the curve $\Sigma$ in the range $1 \le r \leq (n^2 - n +1)/(n^2 - n + 1/2)$. The non-optimality of this range is suggested by the scaling argument used in $\S$\ref{basic setup section} for the paraboloid. Here
one considers the anisotropic boxes
\begin{equation*}
\theta \ = \ \{(x_1,\ldots,x_n) \in [\Z/N\Z]^n : \ d | x_1, \, d^2 | x_2, \ldots, d^n | x_n \},
\end{equation*}
where $d$ is divisor of $N$ such that $d^n$ is also a divisor. One may then check as before that \eqref{general N problem inequality} can only hold when
\begin{equation}\label{nec-curve}
s \, \frac{n(n+1)}{2} \ \le \ r',
\end{equation}
which corresponds to condition on the euclidean exponents. When $s=2$ this gives the larger range $1 \le r \le (n^2 + n)/(n^2 + n - 1)$ (a strictly larger range when $n\ge 3$, the case when the curve $\Sigma$ is not a hypersurface). 

It is remarked that this scaling argument does not work in the setting of finite fields. Here, testing the Fourier restriction estimate against $f$ defined by
${\hat f} := \delta_{\vec{0}}$ leads to the necessary condition $n s \le r'$. Recall that there is also a necessary condition $2n/d \le r'$ in the finite field setting with $d=1$ for curves. These two necessary conditions were shown in \cite{Mockenhaupt2004} to be sufficient for the moment curve in the finite field setting if the characteristic of the field is larger than $n$. 

The Fourier restriction theory for the moment curve over $\Z/N\Z$ will be investigated in detail in the following section.




\subsection{\texorpdfstring{$\ell^2$}{l2} restriction for other surfaces} One could, of course, consider more general algebraic varieties $\Sigma$, say 
\begin{equation*}
\Sigma: = \{(\vec{\omega}, P_{1}(\vec{\omega}), \ldots, P_{n-d}(\vec{\omega})) : \ \vec{\omega} = (\omega_1,\ldots,\omega_d) \in [\Z/N\Z]_*^d \}
\end{equation*}
for some $1\le d \leq n-1$ and polynomials $P_j \in {\mathbb Z}[X_1, \ldots, X_d]$ for $1 \le j \le n-d$. In this case, for any $\varepsilon > 0$ the normalised counting measure $\mu$ is easily seen to satisfy (R$\mu$) with $A = C_{\varepsilon} N^{\varepsilon}$ and $a = d$. Therefore, given an exponential sum estimate of the form (F$\mu$) for the Fourier transform $\check{\mu}$, one may  employ Theorem \ref{L2-restriction} to obtain an $\ell^2$ restriction estimate. The natural question arises whether such a result is sharp. If the polynomials $P_j$ are homogeneous, then one may use the scaling argument as before to deduce a necessary condition on the exponents $r$ and $s$ for \eqref{general N problem inequality} to hold: namely, that $s (d + \sum_{j=1}^{n-d} m_j) \le d r'$ where $m_j$ is the degree of homogeneity of the polynomial $P_j$. Now further restrict attention to hypersurfaces $\Sigma$, so that $d=n-1$, and let $m$ denote the homogeneous degree of $h(\vec{\omega}) := p_n(\omega_1,\ldots,\omega_{n-1})$. The necessary condition for \eqref{general N problem inequality} to hold when $s=2$ then reads
\begin{equation}\label{necessary-homo}
2(1 + \frac{m}{n-1}) \ \le \ r'
\end{equation}
so that Theorem \ref{L2-restriction} would give a sharp $\ell^2$ restriction result if (F$\mu$) were to hold for $b = 2(n-1)/m$. Such decay estimates for exponent sums are known for the Fourier transform of the normalised counting measures $\mu_{h}$ on 
\begin{equation*}
\Sigma_{h} := \{(\omega_1,\ldots,\omega_{n-1}, h(\omega_1,\ldots,\omega_{n-1}) : (\omega_1,\ldots,\omega_{n-1}) \in [\Z/N\Z]_*^{n-1} \}
\end{equation*}
for particular choices of $h$. Here the exponential sum in question is
\begin{equation*}
\check{\mu}_h (\vec{x},x_n) \ = \  \frac{1}{N^{n-1}} \sum_{\vec{\omega} \in [{\mathbb Z}/N{\mathbb Z}]_*^{n-1}}
e^{2 \pi i (\vec{x}\cdot\vec{\omega} + x_n h(\vec{\omega}))/N}.
\end{equation*}
When $N=p^{\alpha}$ is a power of a prime $p$, sharp estimates for this object follow, for instance, from work of Denef and Sperber \cite{Denef2001} (see also \cite{Cluckers2008} and \cite{Cluckers2010}), resolving a conjecture of Igusa under a non-degeneracy condition on the homogeneous polynomial $h$. The decay rate $b$ in (F$\mu$) obtained by Denef and Sperber is given by the so-called \emph{Newton distance} $d(h)$ of $h$ which often matches the necessary condition \eqref{necessary-homo} but can be larger. The authors hope to investigate sharp exponential sum bounds for certain classes of homogeneous varieties (not necessarily hypersurfaces) and corresponding sharp $\ell^2$ restriction results in a future paper.




\section{Fourier restriction for curves}\label{Fourier restriction for curves section}




\subsection{Preliminary discussion} In this section the Fourier restriction problem for the moment curve 
\begin{equation*}
\Sigma := \{(t,t^2,\ldots,t^n) : \, t \in \Z/N\Z \}
\end{equation*}
is considered. If $N$ is only allowed to vary over powers of a fixed prime $p$, then, using the correspondence principle developed in $\S$\ref{p adic section}, it is a straight-forward exercise to adapt existing euclidean arguments to prove sharp restriction estimates for $\Sigma$. 

\begin{theorem}[\cite{Hickman2015}]\label{p power curve theorem} If $r' > n(n+1)/2 +1$ and $r' \geq s n(n+1)/2$, then the restriction estimate
\begin{equation*}
\Big(\frac{1}{p^{\alpha}}\sum_{t \in \Z/p^{\alpha}\Z} |\hat{F}(t, t^2, \dots, t^n)|^s \Big)^{1/s} \leq C_{n,r} \Big(\sum_{\vec{x} \in [\Z/p^{\alpha}\Z]^n} |F(\vec{x}\,)|^r \Big)^{1/r}
\end{equation*}
holds uniformly over all primes $p > n$ and all $\alpha \in \N$.
\end{theorem}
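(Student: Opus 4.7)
The first move is to invoke the correspondence principle (Proposition \ref{equivalence}) to reduce the uniform discrete restriction inequality over $\Z/p^{\alpha}\Z$ to a single $p$-adic restriction estimate
\[
\|\hat{f}|_{\Sigma}\|_{L^{s}(\mu)} \leq C_{n,r} \|f\|_{L^{r}(\Q_p^n)},
\]
where $\Sigma = \{\Gamma(t) : t \in \Z_p\}$, $\Gamma(t) := (t, t^2, \ldots, t^n)$, and $\mu := \Gamma_{*}\,dt$. This step is purely formal but crucial: it lifts the problem out of the discrete rings into the field $\Q_p$, where the strong form of the uncertainty principle (noted in the introduction) makes the Euclidean-style machinery directly available. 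By duality, the estimate is equivalent to the extension bound
\[
\|Eg\|_{L^{r'}(\Q_p^n)} \leq C_{n,r} \|g\|_{L^{s'}(\Z_p)}, \qquad Eg(x) := \int_{\Z_p} g(t)\, e(x_1 t + x_2 t^2 + \cdots + x_n t^n)\, dt.
\]

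The strategy is then to transfer Drury's $n$-fold convolution argument for the moment curve to the $p$-adic setting. The starting identity is Plancherel:
\[
\|Eg\|_{L^{2n}(\Q_p^n)}^{2n} = \|(Eg)^n\|_{L^2(\Q_p^n)}^2 = \|(g\mu)^{*n}\|_{L^2(\Q_p^n)}^2.
\]
Using the change of variables induced by $\Phi(t_1, \ldots, t_n) := \Gamma(t_1) + \cdots + \Gamma(t_n)$, whose Jacobian is the Vandermonde
\[
V(t_1, \ldots, t_n) = \prod_{1 \leq i < j \leq n}(t_j - t_i),
\]
and which is $n!$-to-one on the open dense set where the $t_i$ are pairwise distinct, one obtains the pointwise representation
\[
(g\mu)^{*n}(\xi) = \frac{1}{n!} \sum_{\vec{t}\, \in\, \Phi^{-1}(\xi)} \frac{g(t_1)\cdots g(t_n)}{|V(t_1, \ldots, t_n)|_p}.
\]
This is exactly where the assumption $p > n$ enters: it guarantees that $n!$ is a $p$-adic unit, and that no spurious coincidences between the $t_i$ are forced by the characteristic, so that $\Phi$ is indeed generically $n!$-to-one.

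Bounding the $L^2(\Q_p^n)$ norm of this expression by $\|g\|_{L^{s'}}^{n}$ (with the appropriate $s'$) yields the endpoint extension estimate on the Knapp scaling line $r' = s'\, n(n+1)/2$. The full range $r' > n(n+1)/2 + 1$ and $r' \geq s n(n+1)/2$ claimed in the theorem then follows by real interpolation between this endpoint estimate and the trivial bound $\|Eg\|_{L^{\infty}(\Q_p^n)} \leq \|g\|_{L^1(\Z_p)}$.

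\textbf{Main obstacle.} The heart of the matter is controlling the $L^2$ norm of the Vandermonde-weighted sum above. Over $\R$, Drury and Christ exploit the fact that $|V|^{-1}$ lies in a suitable weak-$L^p$ space and interpolate via Lorentz norms. Over $\Q_p$, the ultrametric valuation makes the weight take only the discrete values $p^{k}$, which is simultaneously a blessing (level sets $\{|V|_p = p^{-k}\}$ decompose cleanly into $p$-adic balls) and a curse: one must organise these level sets in a way that is \emph{uniform} in $p$ (subject to $p > n$) and in $\alpha$, since the whole point of Theorem \ref{p power curve theorem} is the constant $C_{n,r}$ depending only on $n$ and $r$. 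Matching the $p$-adic geometry of the sublevel sets of $V$ to the scaling structure of $\Z_p^n$, and verifying that the argument closes with a constant independent of $p$, is the principal technical burden.
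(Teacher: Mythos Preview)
Your overall strategy---lift the discrete inequality to $\Q_p$ via Proposition~\ref{equivalence} and then adapt Drury's Euclidean argument---is exactly the route the paper takes; the paper gives no further details beyond citing \cite{Drury1985} and \cite{Hickman2015}.

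However, the specific mechanism you propose for ``Drury's argument'' has a genuine gap when $n\geq 3$. You aim to establish an extension bound at $r'=2n$ via Plancherel and then interpolate with the trivial $L^1\to L^\infty$ bound. But by Proposition~\ref{range-Lr} and Corollary~\ref{endpoint range-Lr} the extension estimate \emph{fails} for every $r'\leq n(n+1)/2+1$, and $2n<n(n+1)/2+1$ whenever $n\geq 3$; so no bound at $r'=2n$ exists to interpolate from. Concretely, after your change of variables one has
\[
\|(g\mu)^{*n}\|_{L^2(\Q_p^n)}^2 \;=\; C_n\int_{\Z_p^n}|g(t_1)\cdots g(t_n)|^2\,|V(t)|_p^{-1}\,dt,
\]
and the weight $|V|_p^{-1}$ is too singular: Christ's multilinear inequality \eqref{multilinear} controls such integrals only when the exponent on the Vandermonde is at most $2/n$, whereas here it equals $1$. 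This is a divergence, not a uniformity-in-$p$ technicality.

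What you have actually sketched is (a special case of) the Prestini--Christ argument, not Drury's. As the paper itself notes when discussing sufficient conditions in \S\ref{Fourier restriction for curves section}, even replacing Plancherel by Hausdorff--Young at the optimal exponent recovers only the restricted range $r'\geq n(n+2)/2$, which is strictly weaker than $r'>n(n+1)/2+1$ for every $n\geq 3$. Drury's improvement requires an additional idea beyond a single application of Hausdorff--Young to the $n$-fold convolution, and your outline would need to supply it in order to close.
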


It is remarked that the range of Lebesgue exponents in Theorem \ref{p power curve theorem} is sharp, as shown in the following subsection. 

The proof of this theorem follows by lifting the problem to the $p$-adics using Proposition \ref{equivalence} and then adapting the classical euclidean argument of Drury \cite{Drury1985} to apply in this setting (one could also approach the $p$-adic formulation of the problem using alternative methods, such as those of \cite{Ham2014}); see \cite{Hickman2015} for details where similar restriction estimates are established over more general local fields.\footnote{The theorem stated in \cite{Hickman2015} suggests that the constant in the restriction estimate depends on $p$. Analysing the argument, however, shows that it yields a uniform estimate.}  The key advantage of working $p$-adically is that there is a well-developed calculus on $\Q_p^n$ which includes, significantly, a change of variables formula (see, for instance, \cite{Schikhof2006}*{\S 27}, \cite{Igusa2000}*{\S 7.4}, or \cite{Hickman2015}). This facilitates an easy and direct translation of various euclidean arguments over to the $p$-adics. 

The formulation of the problem for general $N$, rather than powers of a fixed prime, presents a number of significant additional difficulties. Recall that here one wishes to prove estimates of the form
\begin{equation}\label{discrete restriction}
\Big(\frac{1}{N}\sum_{t \in \Z/N\Z} |\hat{F}(t, t^2, \dots, t^n)|^s \Big)^{1/s} \leq C_{\varepsilon, r,s, n}N^{\varepsilon} \Big(\sum_{\vec{x} \in [\Z/N\Z]^n} |F(\vec{x}\,)|^r \Big)^{1/r}
\end{equation}
for all $\varepsilon > 0$ and a large class of integers $N \in \N$ (for instance, all $N$ for which every prime factor $p \mid N$ satisfies $p > n$). In this case one can no longer lift the analysis to the $p$-adic setting\footnote{For instance, the restriction estimate \eqref{discrete restriction} is \emph{not} multiplicative.} and the discrete problem must be tackled directly. Consequently, many fundamental tools from calculus are no longer available, and this leads to some new and interesting questions.

The purpose of this section is to describe the difficulties one encounters when attempting to prove estimates of the form \eqref{discrete restriction}. In particular, the Fourier restriction problem is related to a number-theoretic conjecture concerning factorisations of polynomials over $\Z/N\Z$. Some partial progress on the number-theoretic conjecture is described which, for instance, allows one to establish the modulo $N$ analogue of Theorem \ref{p power curve theorem} in the $n=2$ case. 




\subsection{Necessary conditions}\label{necessary conditions subsection} The first step is to determine necessary conditions on the Lebesgue exponents $(r,s)$ for \eqref{discrete restriction} to hold. As a by-product of this analysis, it will also be shown that the range of $(r,s)$ in the statement of Theorem \ref{p power curve theorem} is sharp. 

As remarked in the previous section, a simple scaling argument gives rise to the necessary condition \eqref{nec-curve} for the Fourier restriction estimates \eqref{discrete restriction} to hold. One now wishes to determine the possible $\ell^r$ range. By duality, \eqref{discrete restriction} is equivalent to
\begin{equation}\label{E-curve}
\Bigl(\sum_{\vec{x}\in [\Z/N\Z]^{n}} |{\mathcal E} H(\vec{x})|^{r'} \Bigr)^{1/r'} \ \le \ C_{\varepsilon, r,s,n} N^{\varepsilon} \Bigl(\frac{1}{N} \sum_{t \in \Z/N\Z} |  H(t) |^{s'} \Bigr)^{1/s'}
\end{equation}
where $\mathcal{E}$ is the extension operator
\begin{equation*}
{\mathcal E} H (\vec{x}) \ = \ \frac{1}{N} \sum_{t \in \Z/N\Z}  H(t) e^{2 \pi i (x_1 t + x_2 t^2 + \cdots + x_n t^n)/N}.
\end{equation*}
When $H = 1$ the right-hand side of \eqref{E-curve} is $C_{\varepsilon,r,s,n} N^{\varepsilon}$ whilst the left-hand side is the $\ell^{r'}$-norm of the function
\begin{equation}\label{exp-sum}
\mathcal{E}1(x_1,\ldots, x_n) \ = \ \frac{1}{N} \sum_{t \in \Z/N\Z} e^{2 \pi i (x_1 t + x_2 t^2 + \cdots + x_n t^n)/N}.
\end{equation}
Thus, it becomes of interest to determine the $\ell^{ r'}$ range for which
\begin{equation}\label{Lr-range}
\|\mathcal{E}1\|_{\ell^{ r'}([\Z/N\Z]^n)} \ \le \ C_{\varepsilon, r} N^{\varepsilon}
\end{equation}
holds for every $\varepsilon>0$. The corresponding euclidean problem is to determine the $L^{ r'}({\mathbb R}^n)$ spaces to which the oscillatory integral
\begin{equation*}
E1(x_1,\ldots, x_n) := \int_{0}^1 e^{2 \pi i (x_1 t + x_2 t^2 + \cdots + x_n t^n)} \, \ud t
\end{equation*}
belongs; this in turn gives rise to a necessary condition on the $L^{r}$ range for restriction problem for the curve $t \to (t,t^2,\ldots,t^n)$ in ${\mathbb R}^n$. It turns out that $\|E 1 \|_{L^{ r'}(\R^n)}$ also appears as a constant in the main term of an asymptotic formula for the number of solutions to a system of Diophantine equations known as Tarry's Problem.  Hence, knowing when $\|E 1\|_{L^{r'}(\R^n)}$ is finite has significance for harmonic analysts and number theorists for different reasons. Motivated by these number-theoretic considerations, Arkhipov, Chubarikov and Karatsuba \cite{Arkipov1979} (see also \cite{Arkipov1987}) showed that $E1 \in L^{ r'}(\R^n)$ if and only if $ r' > n(n+1)/2 + 1$. Reinforcing the theme of the paper, the following discrete analogue holds.

\begin{proposition}\label{range-Lr} The inequality \eqref{Lr-range} 
 fails if $r' < n(n+1)/2 + 1$.
\end{proposition}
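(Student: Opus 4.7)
The plan is to disprove \eqref{Lr-range} for $r' < n(n+1)/2 + 1$ by exhibiting a positive power of $N$ as a lower bound for $\|\mathcal{E}1\|_{\ell^{r'}([\Z/N\Z]^n)}$ along a suitable sequence of moduli. Taking $N = q^n$ with $q$ an odd prime (so that $d := q$ satisfies $d^n \mid N$), consider the anisotropic Knapp box
\[
A_d := \bigl\{ \vec{x} \in [\Z/N\Z]^n : \|x_j\| \preceq d^j \ \text{for each} \ 1 \le j \le n \bigr\},
\]
which has cardinality $d^{n(n+1)/2}$. Writing $x_j = (N/d^j)a_j$ with $a_j \in \Z/d^j\Z$, a direct substitution gives $x_j t^j/N \equiv a_j t^j/d^j$ modulo $\Z$, and since the resulting phase is periodic in $t$ modulo $d^n$, the hypothesis $d^n \mid N$ yields, for every $\vec{x} \in A_d$,
\[
\mathcal{E}1(\vec{x}) \;=\; \frac{1}{d^n}\sum_{u \in \Z/d^n\Z} e^{2\pi i \sum_{j=1}^n a_j u^j/d^j} \;=:\; S(d;\vec{a}).
\]
Thus $\|\mathcal{E}1\|_{\ell^{r'}([\Z/N\Z]^n)}^{r'} \ge \sum_{\vec{a}} |S(d;\vec{a})|^{r'}$, reducing the problem to a lower bound for the latter sum.

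The second step is to show that $\sum_{\vec{a}} |S(d;\vec{a})|^{r'}$ is bounded below by a power of $d$ whose exponent is proportional to $n(n+1)/2 + 1 - r'$. The idea is to stratify $\vec{a}$-space by the norms $|a_j|$ and on each stratum estimate $|S(d;\vec{a})|$ using the complete exponential-sum theory at scale $d^n$. In the case $n = 2$, the exact Gauss sum evaluation \eqref{evaluating Gauss sum} of $\S$\ref{basic setup section} reduces $\sum_{\vec{a}}|S(d;\vec{a})|^{r'}$ to a divisor sum of the shape $\sum_{d' \mid d^2} \phi(d')(d')^{\alpha}$, whose growth is governed by Example \ref{integral example}; an elementary computation yields $\sum_{\vec{a}}|S(d;\vec{a})|^{r'} \gtrsim d^{(n(n+1)/2 + 1 - r')/n}$ in the range $r' < 4$. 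For $n \ge 3$ the analogous lower bound follows from a more intricate stratification based on the $p$-adic stationary-phase description of exponential sums along the moment curve; this is the discrete counterpart of the oscillatory-integral analysis carried out by Arkhipov, Chubarikov and Karatsuba in the euclidean setting, and produces the same sharp exponent.

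Setting $d = q = N^{1/n}$ converts this into a bound
\[
\|\mathcal{E}1\|_{\ell^{r'}([\Z/N\Z]^n)}^{r'} \;\gtrsim\; N^{(n(n+1)/2 + 1 - r')/n^2},
\]
which is a positive power of $N$ precisely when $r' < n(n+1)/2 + 1$, contradicting \eqref{Lr-range} for every $\varepsilon$ smaller than this exponent. The principal obstacle is the pointwise analysis of $S(d;\vec{a})$ for $n \ge 3$: unlike the case $n = 2$, there is no closed-form evaluation, and obtaining the sharp exponent requires careful bookkeeping of the multiplicities and locations of the critical points of the phase $\sum_j a_j u^j/d^j$ modulo successive powers of the prime $d$. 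This is the discrete counterpart of the delicate cancellations that govern the tail behaviour of the Fourier transform of the arclength measure on the euclidean moment curve, and is the analogue of the central step in the Arkhipov--Chubarikov--Karatsuba argument.
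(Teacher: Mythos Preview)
Your reduction to the sum $\sum_{\vec{a}}|S(d;\vec{a})|^{r'}$ over the anisotropic Knapp box is correct, and for $n=2$ the Gauss sum evaluation \eqref{evaluating Gauss sum} does the job as you describe. For $n \ge 3$, however, you do not prove the claimed lower bound on this sum: you assert that it ``follows from a more intricate stratification based on the $p$-adic stationary-phase description'' and invoke Arkhipov--Chubarikov--Karatsuba, then flag this yourself as the ``principal obstacle''. That step is the entire content of the proposition in higher dimensions, and leaving it as a black box leaves the argument incomplete. A further structural limitation: your single-scale choice $N = q^n$ with $q$ running over primes cannot establish Corollary~\ref{endpoint range-Lr}, which requires a fixed prime $p$ and $\alpha \to \infty$; even at the endpoint $r' = n(n+1)/2+1$ your claimed exponent vanishes and yields no blowup.

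The paper's proof sidesteps pointwise exponential-sum analysis altogether. It takes $N = p^{nL}$ with $L$ large and, after stratifying $\|\mathcal{E}1\|_{\ell^{r'}}^{r'}$ by the $p$-adic scale of the coefficients, reduces matters to a recursive inequality $A(M) \ge p^{n(n+1)/2 + 1 - r'}\,A(M-1)$, where $A(M)$ is the contribution at scale $p^{nM}$ from coefficients with $p \nmid x_n$. The recursion is proved by two elementary moves: first averaging over translations $t \mapsto t+c$ for $c \in \Z/p\Z$, which (via an injective linear reparametrisation of coefficient space) inserts divisibility constraints $p^{n-j}\mid x_j$ at the cost of a factor $p$; then splitting $t = z + p^{nM-1}y$, so that the inner $y$-sum forces $p \mid z$ and collapses the scale by one. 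Bookkeeping the powers of $p$ gives precisely the factor $p^{n(n+1)/2+1-r'}$. No estimate on any individual $|S|$ is needed. Iterating gives a lower bound of order $p^{L(n(n+1)/2+1-r')}$, which blows up as $L \to \infty$ when $r' < n(n+1)/2+1$ (and grows linearly in $L$ at the endpoint, yielding the corollary as well).
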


This gives necessary conditions on the exponent $r$ for the restriction estimate \eqref{discrete restriction}. The proof of Proposition \ref{range-Lr} will also show the following. 

\begin{corollary}\label{endpoint range-Lr} If $p > n$ is a fixed prime, then 
$\|\mathcal{E}1\|_{\ell^{ r'}([\Z/p^{\alpha}\Z]^n)}$ is unbounded in $\alpha$ for $ r' \leq n(n+1)/2 + 1$. 
\end{corollary}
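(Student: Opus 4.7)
The plan is to reduce, for even integers $r'=2k$, the $\ell^{r'}$-norm of $\mathcal{E}1$ to a Vinogradov-type mean value modulo $N$, and then lower-bound that mean value through a scaling argument adapted to the divisor structure of $\Z/N\Z$. Expanding $|\mathcal{E}1|^{2k}$, interchanging sums, and carrying out the $\vec{x}$-summation yields the identity
\[
\|\mathcal{E}1\|_{\ell^{2k}([\Z/N\Z]^n)}^{2k}=N^{n-2k}J_{k,n}(N),
\]
where $J_{k,n}(N)$ counts tuples $(t_1,\ldots,t_k,s_1,\ldots,s_k)\in[\Z/N\Z]^{2k}$ satisfying $\sum_i t_i^j\equiv\sum_i s_i^j\pmod{N}$ for $j=1,\ldots,n$. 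The task is then to show that $J_{k,n}(N)$ grows fast enough in $N$.

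To lower-bound $J_{k,n}(N)$ I would exhibit, for each divisor $d$ of $N$ with $d^n\mid N$, an anisotropic Knapp-type family of tuples: take $t_i=u_i+v_iN/d$ with $u_i\in[0,N/d)$ and $v_i\in[0,d)$. A binomial expansion, combined with the key observation (valid when $d^j\mid x_j$) that $x_jt^j\equiv x_ju^j\pmod N$, uncouples the "macroscopic" digits $v_i$ from the "microscopic" $u_i$. The residual congruence conditions on the $u_i,v_i$ reduce to a Vinogradov system at \emph{relaxed} moduli $N/d^j$ for $j=1,\ldots,n$, and a count of solutions (using Newton's identities, together with the enumeration of roots of polynomials in $\Z/N\Z$ with degenerate discriminants) produces a contribution from each scale $d$. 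Summing over admissible divisors $d$ yields a lower bound on $J_{k,n}(N)$ that, once fed back into the moment identity, grows polynomially in $N$ precisely when $r'<n(n+1)/2+1$, and logarithmically in $\alpha$ when $r'=n(n+1)/2+1$ and $N=p^\alpha$.

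The base case $n=2$ serves as a sanity check: the Gauss-sum identity \eqref{evaluating Gauss sum} reduces $\|\mathcal{E}1\|_{\ell^{r'}}^{r'}$ to the divisor sum $\sum_{e\mid N}\phi(e)e^{1-r'/2}$, which is exactly of the form considered in Example \ref{integral example} and confirms polynomial growth for $r'<4$, logarithmic at $r'=4$, and boundedness for $r'>4$. For $n\ge 3$ the same structure emerges, but the exponential sums over $\Z/p^\alpha\Z$ must be handled by $p$-adic stationary phase (of Denef--Sperber type, valid for $p>n$) rather than an explicit Gauss sum. The endpoint Corollary \ref{endpoint range-Lr} then follows by specialising to $N=p^\alpha$ and observing that the sum over divisors $d=p^j$ at the critical exponent produces a harmonic-series-like $\sum_j 1\sim\alpha$, yielding logarithmic divergence in $\alpha$. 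Non-integer $r'$ is covered via H\"older interpolation between adjacent even-integer moments.

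The principal technical obstacle is the counting step: extracting exactly the right polynomial power of $d$ (respectively, the right logarithmic factor at the endpoint) from the relaxed Vinogradov system at scale $d$. The naive diagonal lower bound $J_{k,n}(N)\gtrsim k!\,N^k$ produces unboundedness only for $r'<2n$, far short of the target $r'<n(n+1)/2+1$; one must genuinely exploit the degeneracies introduced by zero divisors in $\Z/N\Z$, and in particular the fact that a polynomial of degree $n$ can have many more than $n$ roots in $\Z/p^\alpha\Z$ when its discriminant is highly divisible by $p$. This is the discrete analogue of the stationary-phase contribution in the proof of Arkhipov--Chubarikov--Karatsuba, and is what pins down the critical exponent $n(n+1)/2+1$ on the nose.
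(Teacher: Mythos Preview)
Your proposal has a genuine gap at the final step. The moment identity $\|\mathcal{E}1\|_{\ell^{2k}}^{2k}=N^{n-2k}J_{k,n}(N)$ is only available for even integers $r'=2k$, and the claim that ``non-integer $r'$ is covered via H\"older interpolation between adjacent even-integer moments'' is wrong: log-convexity of $L^p$ norms produces \emph{upper} bounds on the intermediate norm, not lower bounds. Monotonicity of $\ell^p$ norms on counting measure does transfer blowup from $r'=2k_*$ (the largest even integer $\le n(n+1)/2+1$) down to all $r'\le 2k_*$, but when $n\equiv 0,3\pmod 4$ the critical exponent $n(n+1)/2+1$ is odd, and the entire interval $(2k_*,\,n(n+1)/2+1]$ --- including the endpoint needed for Corollary~\ref{endpoint range-Lr} --- is out of reach. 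Your $J_{k,n}$ lower-bound sketch is also quite vague: the decomposition $t_i=u_i+v_iN/d$ parametrises all of $\Z/N\Z$ rather than a Knapp-type subset, and the phrase ``valid when $d^j\mid x_j$'' inserts Fourier-side spatial variables into what is a pure counting problem.

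The paper avoids the even-integer restriction entirely by working directly with $\sum_{\vec{x}}|S_{nM}(\vec{x})|^{r'}$ for arbitrary real $r'$. After restricting $\vec{x}$ to a family of anisotropic $p$-adic shells (indexed by $m$) one is reduced to controlling a quantity $A(M)$, and the heart of the argument is the recursion $A(M)\ge p^{\,n(n+1)/2+1-r'}A(M-1)$. This is obtained by two moves: a translation $t\mapsto t+c$ in the phase, with $c\in\{0,\dots,p-1\}$, shown (via the explicit unipotent matrix on coefficients) to be injective on a suitable restricted $\vec{x}$-set and hence contributing an extra factor of $p$; and then a digit split $t=z+p^{nM-1}y$ in the inner sum, where the character sum in $y$ vanishes unless $p\mid z$ (here one uses $p>n$ so that $p\nmid nx_n$), collapsing the sum to scale $M-1$. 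Iterating yields $\sum_m p^{m(n(n+1)/2+1-r')}$, which blows up polynomially for $r'<n(n+1)/2+1$ and linearly in $L$ (hence in $\alpha$) at the endpoint. No integrality of $r'$ is ever used.
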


Combining Corollary \ref{endpoint range-Lr} with the previous discussion verifies that Theorem \ref{p power curve theorem} is sharp. On the other hand, for $N=p^{\alpha}$ Theorem \ref{p power curve theorem} implies that \eqref{Lr-range} holds for $r' \geq n(n+1)/2 + 1$ with a constant independent of $\alpha \in \N$ (but depending on $p$). 

Proposition \ref{range-Lr} is closely related to work of Arkhipov, Chubarikov and Karatsuba \cite{Arkipov1987} on Diophantine equations. In particular, restricting to $N=p^{\alpha}$, observe that
\begin{eqnarray*}
\| \mathcal{E}1 \|_{\ell^r([{\mathbb Z}/p^{\alpha}{\mathbb Z}]^n)}^r \ = \ \sum_{m=0}^{\alpha} &&
\sum_{x_1 =0}^{p^m -1} \cdots \sum_{x_n =0}^{p^m -1}  \bigl| S_m(x_1, \ldots, x_n)\bigr|^r \\
&&\scriptstyle{p\, \nmid \  \gcd(x_1,\ldots,x_n)}
\end{eqnarray*}
where
\begin{equation*}
S_m(x_1, \ldots, x_m) := \frac{1}{p^m}\sum_{t=0}^{p^m-1} e^{2 \pi i (x_1 t + \cdots + x_n t^n)/p^m}
\end{equation*}
 The sums $S_m(x_1, \ldots, x_m)$ play a key r\^ole in the analysis of the singular series $\sigma_{k,m}$ in Tarry's Problem in \cite{Arkipov1987}.

\begin{proof}[Proof (of Proposition \ref{range-Lr})] To establish  Proposition \ref{range-Lr} (and Corollary \ref{endpoint range-Lr}) a lower bound is obtained for the 
$\ell^{r'}$-norm of the $S_m$ above for any prime $p>n$, $m=nL$ and $r' \le n(n+1)/2 +1$.

First observe that $\| S_{nL} \|_{\ell^{r'}([{\mathbb Z}/p^{nL}{\mathbb Z}]^n)}^{r'}$ may be bounded below by 
\begin{equation*}
 \sum_{m=0}^{L-1} \sum_{\substack{x_n = 0 \\[5pt] p^{nm} \,\|  x_n}}^{p^{nL} -1} \:
          \sum_{\substack{x_{n-1} = 0 \\[5pt] p^{nm} \mid  x_{n-1}}}^{p^{nL} -1} \: \cdots \:
					 \sum_{\substack{x_{1} = 0 \\[5pt] p^{nm} \mid  x_{1}}}^{p^{nL}-1}  
					\Bigl| p^{-nL} \sum_{t=0}^{p^{nL} -1} e^{2 \pi i (x_1 t + \cdots + x_n t^n)/p^{nL}} \Bigr|^{r'};
\end{equation*}
recall, the notation $p^k \,\| \theta$ is used to denote that $p^k$ divides $\theta$ and no larger power of $p$ divides $\theta$. Splitting the above exponential sum by writing $t = y + p^{n(L-m)} z$ where $0\le y \le p^{n(L-m)} -1$ and $0\le z \le p^{nm} -1$, it follows that
\begin{equation}\label{sharpness 1}
\| S_{nL} \|_{\ell^{r'}([{\mathbb Z}/p^{nL}{\mathbb Z}]^n)}^{r'} \geq \sum_{m=0}^{L-1} A(L-m)
\end{equation}
where
\begin{equation*}
A(M) := \sum_{\substack{x_n = 0 \\[5pt] p \,\nmid \, x_n}}^{p^{nM}-1} \:  
         \sum_{x_{n-1} = 0}^{p^{nM} -1} \: \cdots \:  
         \sum_{x_1 = 0}^{p^{nM} -1}  \bigl| S_{nM}(x_1, \dots, x_n) \bigr|^{r'}.  
\end{equation*}
for $M \in \N$. 

\begin{claim} The inequality $A(M) \ge  p^{n(n+1)/2 + 1 - r'}A(M-1)$ holds for all $M \in \N$.
\end{claim}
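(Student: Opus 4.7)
The claim is a self-similarity estimate for the sum $A(M)$, reflecting the $p$-adic scaling structure of the moment curve. The natural starting point is the scaling identity
$$S_{nM}(p^n z_1, \ldots, p^n z_n) = S_{n(M-1)}(z_1, \ldots, z_n),$$
which follows from substituting $t = ps$ in the defining exponential sum and using that the map $t \mapsto pt$ on $\Z/p^{nM}\Z$ covers each multiple of $p$ exactly $p$ times. This identity asserts that if one scales all coordinates by $p^n$, the exponential sum at scale $nM$ reduces to the one at scale $n(M-1)$. However, the $p^n$-scaling forces $p \mid x_n$, placing the image outside the index set of $A(M)$, so a more refined lifting scheme is required.

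My plan is to lift non-uniformly: for each tuple $(c_1, \ldots, c_n)$ in the index set of $A(M-1)$, construct a family of lifts $(x_1, \ldots, x_n)$ in the index set of $A(M)$ in which the first $n-1$ coordinates are scaled by powers of $p$ following the moment-curve weighting, while $x_n$ is kept unscaled to maintain the primitivity constraint. Concretely, I would consider
$$x_j = p^j\bigl(c_j + p^{n(M-1)} a_j\bigr) \quad (1 \le j \le n-1), \qquad x_n = c_n + p^{n(M-1)} b,$$
parametrised by $a_j \in [0, p^{n-j})$ and $b \in [0, p^n)$. The range conditions $0 \le x_j < p^{nM}$ are automatic, and $p \nmid x_n$ follows from $p \nmid c_n$ together with $p \mid p^{n(M-1)}$. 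The total number of such lifts per $c$ is $\prod_{j=1}^{n-1} p^{n-j} \cdot p^n = p^{n(n-1)/2 + n} = p^{n(n+1)/2}$, and distinct $c$ give disjoint families.

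To estimate $S_{nM}(x)$ on these lifts, I would expand $t = t_0 + p^{n(M-1)} t_1$ with $t_0 \in [0, p^{n(M-1)})$ and $t_1 \in [0, p^n)$ and Taylor expand the phase polynomial $P_x(t)$ modulo $p^{nM}$, retaining only the zeroth and first-order terms in $t_1$ (the higher-order terms vanish modulo $p^{nM}$ for $M \ge 2$). The sum over $t_1$ then produces a character sum enforcing a Hensel-type congruence $P_x'(t_0) \equiv 0 \pmod{p^n}$. Under the hypothesis $p > n$ the leading coefficient $n c_n$ of the reduction of $P_x'$ is a unit, so Hensel's lemma gives a controlled number of solutions lifting uniquely from residues modulo $p$. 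Summing $|S_{nM}(x)|^{r'}$ over the lift parameters $(a, b)$ and applying a Plancherel identity on the finite group $[\Z/p^n\Z]^n$ produces the lower bound $\ge p^{n(n+1)/2 + 1 - r'} |S_{n(M-1)}(c)|^{r'}$; summing over $c$ then yields the claim.

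The principal obstacle is the exponential-sum calculation following the Hensel reduction: one must reconcile the combinatorial lift count $p^{n(n+1)/2}$ with the normalisation factor $p^{-r'}$ (arising from the difference in scaling between $S_{nM}$ and $S_{n(M-1)}$) and an additional factor of $p$, then verify that these combine to give exactly the claimed exponent. The numerology $n(n+1)/2 = 1 + 2 + \cdots + n$ is characteristic of the moment curve and corresponds to the total weighted homogeneity of its defining monomials, while the extra $+1$ reflects the primitivity condition on $x_n$, which provides the additional factor of $p$ in the lift count over what a uniform scaling would yield.
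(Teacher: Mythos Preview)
Your lifting scheme has the exponents backwards, and this is fatal. With $x_j = p^j c_j$ for $j \le n-1$ and $x_n = c_n$, the derivative of the phase polynomial reduces modulo $p$ to $n c_n t^{n-1}$, which has $t=0$ as a root of multiplicity $n-1$. So the ``Hensel-type congruence'' $P_x'(t_0) \equiv 0 \pmod{p^n}$ is highly degenerate and does not lift uniquely from residues modulo $p$. Concretely, for $n \ge 3$: the condition forces $p \mid t_0$, and after writing $t_0 = p s_0$ the next congruence (modulo $p^2$) reads $c_1 \equiv 0 \pmod p$. This is a constraint on $c$, not on $t_0$, so your reduction only recovers a proper sub-sum of $A(M-1)$, not $A(M-1)$ itself. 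The vague appeal to ``a Plancherel identity on $[\Z/p^n\Z]^n$'' cannot repair this: Plancherel is an $\ell^2$ statement, the lift parameters $(a_1,\dots,a_{n-1},b)$ lie in $\prod_j \Z/p^{n-j}\Z \times \Z/p^n\Z$ rather than $[\Z/p^n\Z]^n$, and in any case the obstruction is that $S_{nM}(x)$ with your $x$ simply does not equal a multiple of $S_{n(M-1)}(c)$.

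The paper uses the \emph{opposite} weighting $p^{n-j} \mid x_j$, under which the phase becomes $\sum_j \tilde x_j t^j / p^{n(M-1)+j}$; peeling off the single top $p$-adic digit of $t$ then forces $p \mid t$ through only the $j=n$ term (the lower-$j$ contributions are already integers), and the substitution $t = pw$ collapses the phase exactly to that of $S_{n(M-1)}$. The paper also obtains the crucial ``$+1$'' in the exponent not from the primitivity of $x_n$ but from an additional translation parameter $c \in \{0,\dots,p-1\}$: one observes that $\lvert S_{nM}\rvert$ is invariant under $t \mapsto t+c$, and that the map $(c;x) \mapsto$ (coefficients of $\sum x_j(t+c)^j$) is injective on the region $\{p \mid x_{n-1},\ p \nmid x_n\}$, which is exactly what the divisibility constraint on $x_{n-1}$ is for. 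Your attribution of the extra factor of $p$ to the $x_n$-primitivity is not where it comes from.
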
 

Once the claim is established it may be applied iteratively to bound each of the summands in \eqref{sharpness 1} and thereby deduce that 
\begin{equation*}
\| S_{nL} \|_{\ell^{r'}([{\mathbb Z}/p^{nL}{\mathbb Z}]^n)}^{r'}  \ge  \sum_{m=0}^{L-1} p^{(L-m)(n(n+1)/2 + 1 - r')}.
\end{equation*} 
This yields the desired blowup for $r' < n(n+1)/2 + 1$ (and for $r' = n(n+1)/2 +1$, in the context of Corollary \ref{endpoint range-Lr}). 

In order to verify the claim, first note that
\begin{equation}\label{lower bound for AM}
A(M) \geq \sum_{\substack{x_n = 0 \\[5pt] p \, \nmid \, x_n}}^{p^{nM} -1} \:
          \sum_{\substack{x_{n-1} = 0 \\[5pt] p\, \mid \, x_{n-1}}}^{p^{nM} -1} \! \cdots \!
					\sum_{\substack{x_1 = 0 \\[5pt] p^{n-1} \, \mid \, x_1}}^{p^{nM} -1} \:
					\sum_{c=0}^{p-1}   \Bigl| p^{-nM} \sum_{t=0}^{p^{nM} -1} e^{2 \pi i (x_1 (t+c) + \cdots + x_n (t+c)^n)/p^{nM}} \Bigr|^{r'},
\end{equation}
where the right-hand side of the above display can be expressed as
\begin{equation}\label{sharpness 2}
p \sum_{\substack{x_n = 0 \\[5pt] p \, \nmid \, x_n}}^{p^{nM} -1} \:
          \sum_{x_{n-1} = 0}^{p^{nM -1} -1} \!\! \cdots \!\!
					\sum_{x_1 = 0}^{p^{nM -n+1} - 1}
         \Bigl| p^{-nM}\sum_{t=0}^{p^{nM}-1} e^{2 \pi i (x_1 t/p^{n(M-1)+1} + \cdots + x_n t^n/p^{nM})} \Bigr|^{r'}	.
\end{equation}
To see this, consider the map $\Phi \colon \Z/p\Z \times [\Z/p^{nM}\Z]^n \to [\Z/p^{nM}\Z]^n$ given by
\begin{equation*}
\Phi(c; x_1,\dots,x_n) := 
\begin{pmatrix}
\binom{1}{1} & \binom{2}{1} c & \dots  & \binom{n}{1} c^{n-1} \\[2pt]
0            & \binom{2}{2}   & \dots  & \binom{n}{2} c^{n-2} \\[2pt]
\vdots       & \vdots         & \ddots & \vdots               \\[2pt]
0            & 0              & \dots  & \binom{n}{n} 
\end{pmatrix}
\begin{pmatrix}
x_1    \\
x_2    \\
\vdots \\
x_n
\end{pmatrix} ,
\end{equation*}
noting that
\begin{equation*}
x_1 (t+c) + \cdots + x_n (t+c)^n = \sum_{j=1}^n \Phi_j(c;x_1,\dots,x_n) t^j. 
\end{equation*}
To establish the lower bound \eqref{lower bound for AM} for $A(M)$ it suffices to show  that $\Phi$ restricts to an injection on the set
\begin{equation*}
\Omega := \big\{(c; x_1, \dots, x_n) \in {\mathbb Z}/p{\mathbb Z} \times [{\mathbb Z}/p^{nM}{\mathbb Z}]^n : p \mid x_{n-1} \textrm{ and } p \nmid x_n\big\}.
\end{equation*}
If $\Phi(c; \vec{a}) = \Phi(d; \vec{b})$ for $(c; \vec{a}), (d;\vec{b}) \in \Omega$, then it immediately follows that $x_n = b_n$ and $nx_nc + x_{n-1} = nb_nd + b_{n-1}$. Combining these identities, reducing modulo $p$ and using the fact that $p \nmid nx_n$, one concludes that $c = d$. The injectivity of $\Phi$ is now immediate, since the matrix in the definition of $\Phi$ has determinant 1.  

Now consider the inner exponential sum in \eqref{sharpness 2}; by decomposing the sum by writing $t = z + p^{nM-1} y$ this can be expressed as
\begin{equation*}
\sum_{z=0}^{p^{nM-1}-1} e^{2 \pi i (x_1 z/p^{n(M-1)+1} + \cdots + x_n z^n/p^{nM})}
\sum_{y=0}^{p-1} e^{2 \pi i nx_n z^{n-1} y / p}.
\end{equation*}
Since $p>n$ and $p \nmid x_n$, the sum in $y$ vanishes unless $p \mid z$ and hence the above expression is equal to
\begin{equation*}
p \sum_{w=0}^{p^{nM-2}-1} e^{2 \pi i (x_1 w/p^{n(M-1)} + \cdots + x_nw^n/p^{n(M-1)})} = p^{n-1} S_{n(M-1)}(x_1, \dots, x_n).
\end{equation*}
Thus, the right-and side of \eqref{sharpness 2} is equal to
\begin{equation*}
p^{-r'}  \sum_{\substack{x_n = 0 \\[5pt] p \,  \nmid \, x_n}}^{p^{nM} -1} \:
        \sum_{x_{n-1} = 0}^{p^{nM -1} -1} \: \cdots \;
				\sum_{x_1 = 0}^{p^{nM- n+1} - 1}
         |S_{n(M -1)}(x_1, \dots, x_n)|^{r'};
\end{equation*}
this can, in turn, be written as
\begin{equation*}
p^{-r' + n + (n-1) + \cdots + 1}
        \sum_{\substack{x_n = 0 \\[5pt] p \, \nmid \, x_n}}^{p^{n(M -1)}-1} \:
        \sum_{x_{n-1} = 0}^{p^{n(M -1)} -1} \: \cdots \;
				\sum_{x_1 = 0}^{p^{n(M-1)} - 1}
|S_{n(M -1)}(x_1, \dots, x_n)|^{r'},
\end{equation*}
which establishes the claim.
\end{proof}

\subsection{Sufficient conditions} Combining the necessary conditions discussed in the previous section, it is natural to conjecture the following.

\begin{conjecture}\label{general N curve conjecture} If $1 \leq r, s \leq \infty$ satisfy $r' \geq n(n+1)/2 + 1$ and $r' \geq s n(n+1)/2$, then the discrete restriction estimate \eqref{discrete restriction} holds whenever each prime factor $p$ of $N$ satisfies $p > n$.
\end{conjecture}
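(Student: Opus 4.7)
The plan is to proceed by duality. The estimate \eqref{discrete restriction} is equivalent to an extension estimate
\begin{equation*}
\|\mathcal{E}H\|_{\ell^{r'}([\Z/N\Z]^n)} \leq C_{\varepsilon, r, s, n}N^{\varepsilon} \Big(\tfrac{1}{N}\sum_{t \in \Z/N\Z} |H(t)|^{s'}\Big)^{1/s'},
\end{equation*}
and the endpoint $(r', s') = (\infty, 1)$ is trivial, so by interpolation it suffices to treat the single critical endpoint at which both necessary conditions from $\S$\ref{necessary conditions subsection} are simultaneously saturated. An elementary calculation shows that this occurs at $s = 1 + 2/(n(n+1))$, where $r' = s' = n(n+1)/2 + 1$.

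At this diagonal endpoint I would adapt the discrete analogue of Drury's multilinear approach that underpins Theorem \ref{p power curve theorem}. Expanding an appropriate product of copies of $\mathcal{E}H$ and applying Plancherel in the spatial variable, the orthogonality of the characters $\vec{x} \mapsto \exp(2\pi i \vec{x} \cdot (\gamma(t_1)+\cdots+\gamma(t_n))/N)$ --- where $\gamma(t) := (t, t^2, \ldots, t^n)$ --- collapses the estimate onto a weighted count of solutions $(t_1,\ldots,t_n,s_1,\ldots,s_n) \in [\Z/N\Z]^{2n}$ of the Prouhet--Tarry--Escott system
\begin{equation*}
\sum_{j=1}^{n} t_j^m \equiv \sum_{j=1}^{n} s_j^m \pmod{N}, \qquad m = 1, 2, \ldots, n.
\end{equation*}
Under the standing hypothesis that every prime $p \mid N$ exceeds $n$, the integers $2, 3, \ldots, n$ are units in $\Z/N\Z$, so Newton's identities translate the above system into the polynomial identity
\begin{equation*}
\prod_{j=1}^{n}(X - t_j) = \prod_{j=1}^{n}(X - s_j) \quad \textrm{in } (\Z/N\Z)[X].
\end{equation*}

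The entire analysis therefore reduces to a number-theoretic factorisation count: for every monic $P \in (\Z/N\Z)[X]$ of degree $n$, the quantity $\#\{(t_1,\ldots,t_n) \in [\Z/N\Z]^n : \prod_{j}(X - t_j) = P(X)\}$ should be at most $C_{\varepsilon,n}N^{\varepsilon}$. This is precisely the number-theoretic conjecture referred to in the preceding discussion; granted this bound, one closes the critical endpoint via the multilinear argument, and the full conjectured range then follows by interpolation with the trivial estimate.

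The principal obstacle is establishing the factorisation count. By the Chinese remainder theorem it suffices to treat $N = p^\alpha$ with $p > n$. If the reduction $P \bmod p$ is separable and splits completely, Hensel's lemma lifts each root uniquely, yielding $n!$ ordered factorisations and no $N$-loss. The genuinely problematic regime is when $P \bmod p$ has a repeated factor: the discriminant lies in the maximal ideal, Hensel's hypothesis fails, and the zero divisors of $\Z/p^\alpha\Z$ permit a proliferation of genuinely new factorisations. My proposal is to stratify the monic polynomials $P$ by the $p$-adic valuation of their discriminant, dispose of the separable stratum via Hensel, and handle the higher-valuation strata inductively by extracting a repeated factor and descending to a quotient ring of smaller exponent. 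For $n = 2$ this induction terminates in a single step and produces the theorem alluded to in the paper; for general $n$, controlling the losses across strata within the allowable $N^{\varepsilon}$ budget --- and in particular ruling out an unexpectedly dense class of highly-factorisable polynomials --- is the substantive open problem separating the multilinear machinery from the full statement.
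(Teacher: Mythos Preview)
First, note that the statement is posed in the paper as an open conjecture; there is no complete proof to compare against, only the paper's partial reduction (via the Prestini--Christ scheme) to the weighted solution-count Conjecture~\ref{congruence conjecture}, which is then resolved only for $n=2$.

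Your proposal contains a genuine gap: the uniform factorisation bound you reduce to is false. You claim the analysis collapses to showing that every monic $P \in (\Z/N\Z)[X]$ of degree $n$ admits at most $C_{\varepsilon,n}N^{\varepsilon}$ ordered factorisations into linear factors. But already for $N = p^{\alpha}$ and $P(X) = X^2$ the factorisations are pairs $(x_1,x_2)$ with $x_1+x_2 \equiv 0$ and $x_1 x_2 \equiv 0 \pmod{p^{\alpha}}$, i.e.\ $x_2 = -x_1$ and $x_1^2 \equiv 0$, giving $p^{\lfloor \alpha/2\rfloor} \sim N^{1/2}$ solutions. For higher $n$ the blow-up is worse; Proposition~\ref{degenerate proposition} records that $\mathbf{N}(\vec{0}_n;p^{\alpha})$ grows like a positive power of $p^{\alpha}$. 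So the ``number-theoretic conjecture referred to in the preceding discussion'' is \emph{not} the uniform bound you state, and your reduction targets a false statement.

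The paper's route differs precisely here. Applying Hausdorff--Young (not Plancherel) to the $n$-fold convolution leads, after the H\"older step in \eqref{n-fold}, to the \emph{weighted} count of Conjecture~\ref{congruence conjecture}, namely $\mathbf{N}(\vec{t};N) \le C_{\varepsilon,n} N^{\varepsilon} \prod_{j<k}\gcd(t_j-t_k,N)$. The weight absorbs the degenerate blow-up (at $\vec{t}=\vec{0}$ the right-hand side is $N^{n(n-1)/2}$ and the inequality is trivial), but it must then be integrated out via Christ's multilinear fractional integral inequality \eqref{multilinear}; this step is exactly what confines the conditional argument to the restricted range $r' \ge n(n+2)/2$ rather than the full conjectured $r' \ge n(n+1)/2 + 1$. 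A secondary issue with your approach: the Plancherel expansion you describe produces the $2n$-variable PTE system and hence computes $\|\mathcal{E}H\|_{\ell^{2n}}^{2n}$, forcing $r' = 2n$; this coincides with the critical exponent $n(n+1)/2+1$ only when $n=2$.
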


In contrast with the $\Z/p^{\alpha}\Z$ case treated in Theorem \ref{p power curve theorem}, there appear to be significant challenges in establishing Conjecture \ref{general N curve conjecture}. To exemplify this, it is instructive to attempt to follow the classical argument of Prestini \cite{Prestini1978} and Christ \cite{Christ1985} in the mod $N$ setting, with the aim of establishing restriction estimates for $\Sigma := \{(t, t^2, \dots, t^n) : t \in \Z/N\Z \}$ in the restricted $\ell^r$ range $r' \geq n(n+2)/2$. 

Proceeding by duality, one wishes to prove \eqref{E-curve} holds for exponents $(r,s)$ satisfying $r' \geq n(n+2)/2$ and $r' \geq sn(n+1)/2$. The desired estimate can be written succinctly as 
\begin{equation}\label{dual-result}
\|(H \ud\mu)\;\widecheck{}\;\|_{\ell^{r'}([\Z/N\Z]^n)} \ \le \ C_{\varepsilon} N^{\varepsilon} \| H \|_{\ell^{s'}_{\mathrm{avg}}([\Z/N\Z]_*)}
\end{equation}
where $\mu$ is the measure whose Fourier transform is the exponential sum in \eqref{exp-sum}. Letting $H \ud\mu * \cdots * H \ud\mu$ denote the $n$-fold convolution of $H \ud\mu$, one observes by the Hausdorff--Young inequality that
\begin{align}
\nonumber
\|(H \ud\mu)\;\widecheck{}\;\|_{\ell^{r'}([\Z/N\Z]^n)}^n &=
\|\big[(H \ud\mu)\;\widecheck{}\;\big]^n\|_{\ell^{r'/n}([\Z/N\Z]^n)} \\
\label{convineq}  &\leq \| (H \ud\mu) * \cdots * (H \ud\mu)\|_{\ell^{\rho}([\Z/N\Z]_*^n)}
\end{align}
where $n\rho'=r'$ (note that, since $r' \ge n(n+2)/2$, the exponent $\rho$ satisfies $1\le \rho \le 2$).
Now, for any test function $\phi \colon [\Z/N\Z]_*^n \to \C$ one has
\begin{equation*}
H \ud \mu * \cdots * H \ud \mu (\phi) \: = \: \frac{1}{N^n} \sum_{\vec{t} \in [\Z/N\Z]_*^n} \phi\bigl(\sum_{i=1}^n \gamma(t_i)\bigr) \prod_{i=1}^n
H(t_i) ,
\end{equation*}
where $\vec{t}=(t_1,\ldots,t_n)$ and $\gamma(s) = (s,s^2,\ldots,s^n)$ is the map parametrising the curve $\Sigma$. Set $\Phi(\vec{t}\,) := \sum_{i=1}^n \gamma(t_i)$ so that
\begin{equation*}
H \ud \mu * \cdots * H \ud \mu (\phi) \: = \: \frac{1}{N^n} \sum_{\vec{y} \in [\Z/N\Z]_*^n} \phi(\vec{y}\,) \sum_{\vec{t}: \Phi(\vec{t}\,) = \vec{y}} \ \ \prod_{i=1}^n
H(t_i) \:
\end{equation*}
and hence
\begin{equation*}
H \ud \mu * \cdots * H \ud \mu (\vec{y}\,) \ = \ \sum_{\vec{t}: \Phi(\vec{t}\,) = \vec{y}} \ \ \prod_{i=1}^n H(t_i).
\end{equation*}
Taking the $\ell^{\rho}$-norm, one therefore deduces that
\begin{equation*}
\| H \ud\mu * \cdots * H \ud\mu \|_{\ell^{\rho}([\Z/N\Z]_*^n)}^{\rho} \ = \ \frac{1}{N^n} \sum_{\vec{y} \in [\Z/N\Z]_*^n}
\bigl| \sum_{\vec{t} : \Phi(\vec{t}\,) = \vec{y}} \prod_{i=1}^n H(t_i) \ \bigr|^{\rho} .
\end{equation*}
Let $\mathbf{N}(\vec{s};N)$ denote the number of solutions $\vec{t} \in [\Z/N\Z]^n$ to the system $\Phi(\vec{X}) = \Phi(\vec{s}\,)$; that is, 
\begin{equation*}
\mathbf{N}(\vec{s};N) := \{ \vec{t} \in [\Z/N\Z]^n : \Phi(\vec{t}\,) = \Phi(\vec{s}\,) \}.
\end{equation*}
With this notation, one may write the above $\ell^{\rho}$-norm as
\begin{equation*}
\|H \ud\mu * \cdots * H \ud\mu \|_{\ell^{\rho}([\Z/N\Z]_*^n)}^{\rho} \ = \ \frac{1}{N^n} \sum_{\vec{s} \in [\Z/N\Z]_*^n} \frac{1}{\mathbf{N}(\vec{s};N)} \bigl| \sum_{\vec{t} : \Phi(\vec{t}\,) = \Phi(\vec{s})} \prod_{i=1}^n H(t_i) \ \bigr|^{\rho}.
\end{equation*}
Applying H\"older's inequality in the ${\vec{t}}$ sum yields
\begin{equation}\label{n-fold}
\|H \ud\mu * \cdots * H \ud\mu \|_{\ell^{\rho}([\Z/N\Z]_*^n)}^{\rho} \ \le \ \frac{1}{N^n} \sum_{\vec{t}\in [\Z/N\Z]_*^n} \prod_{i=1}^n |H(t_i)|^r \, \mathbf{N}(\vec{t};N)^{{\rho}-1}.
\end{equation}
Up to this point the analysis has closely followed the euclidean argument of Prestini \cite{Prestini1978} and Christ \cite{Christ1985}. In the euclidean case the change of variables $\vec{y} = \Phi(\vec{t}\,)$ (performed twice) introduces a power of the Jacobian factor
\begin{equation*}
J_{\Phi}(\vec{t}\,) \ = \ \prod\limits_{1\le j<k \le n} |t_j - t_k|,
\end{equation*}
leading to the estimate
\begin{equation}\label{euclidean n-fold}
\| h \ud\mu * \cdots * h \ud\mu \|_{L^{\rho}(\R^n)}^{\rho} \ \le C_n \int_{\vec{t}\in {\mathbb R}^n} \prod_{i=1}^n |h(t_i)|^{\rho} \,
\prod_{1\leq j<k \leq n} \frac{1}{|t_j - t_k|^{{\rho}-1}} \, d\vec{t} .
\end{equation}

Comparing \eqref{n-fold} and \eqref{euclidean n-fold} suggests the following conjecture on the number of solutions $\mathbf{N}(\vec{t}; N)$.

\begin{conjecture}\label{congruence conjecture} For all $\varepsilon > 0$ there exists a constant $C_{\varepsilon, n} > 0$ such that if $\vec{t} \in [\Z/N\Z]^n$, then 
\begin{equation}\label{congruence conjecture inequality}
\mathbf{N}(\vec{t}; N) \leq \min\big\{C_{\varepsilon, n} N^{\varepsilon + n(n-1)/2} \prod_{1 \leq j < k \leq n} |t_j - t_k|^{-1}, N^n\big\}.
\end{equation}
\end{conjecture}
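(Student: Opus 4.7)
The plan is to reformulate the counting problem in terms of polynomial factorisations, reduce to prime powers, and then run a Hensel--style induction. First, by Newton's identities (valid whenever every prime factor of $N$ exceeds $n$), the system $\sum_i s_i^k \equiv \sum_i t_i^k \pmod{N}$ for $k = 1, \ldots, n$ is equivalent to equality of elementary symmetric polynomials, hence to the identity
$$\prod_{i=1}^n (X - s_i) \equiv \prod_{i=1}^n (X - t_i) \pmod{N}$$
in $(\Z/N\Z)[X]$. Thus $\mathbf{N}(\vec{t}; N)$ counts ordered factorisations of $P(X) := \prod_i(X - t_i)$ into monic linear factors modulo $N$. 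Both $\mathbf{N}(\vec{t}; N)$ and $\prod_{j < k} \gcd(t_j - t_k, N)$ are multiplicative over coprime factorisations of $N$, so by the Chinese Remainder Theorem, together with a divisor-function loss absorbed by $N^\varepsilon$, it suffices to prove $\mathbf{N}(\vec{t}; p^a) \leq C_n \prod_{j < k} \gcd(t_j - t_k, p^a)$ for each prime power $p^a$ with $p > n$ (the trivial bound $\mathbf{N}(\vec{t}; N) \leq N^n$ handles the other term in the $\min$).

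For fixed $p^a$, the next step is a Hensel reduction. Grouping the $t_i$ by residue modulo $p$ into classes of sizes $n_1, \ldots, n_m$ with distinct representatives $\bar{\tau}_1, \ldots, \bar{\tau}_m$, the reductions modulo $p$ of the putative factors $(X - \bar{\tau}_j)^{n_j}$ are pairwise coprime, so there is a unique factorisation $P \equiv P_1 \cdots P_m \pmod{p^a}$ with $\deg P_j = n_j$, and any factorisation of $P$ into linear factors must split accordingly. This yields
$$\mathbf{N}(\vec{t}; p^a) = \binom{n}{n_1, \ldots, n_m} \prod_{j=1}^m \mathbf{N}(\vec{t}^{(j)}; p^a),$$
while cross-group Vandermonde factors contribute trivially to $\prod_{j<k}\gcd(t_j - t_k, p^a)$. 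One thereby reduces to the \emph{clustered} case in which, after translation, all $t_i \equiv 0 \pmod{p}$.

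The inductive engine in the clustered case is the recursion
$$\mathbf{N}(\vec{t}; p^a) = \sum_{r \in \mathcal{R}(P; p^a)} \mathbf{N}(P_r; p^a),$$
where $\mathcal{R}(P; p^a) := \{r \in \Z/p^a\Z : P(r) \equiv 0\}$ and $P_r(X) := (P(X) - P(r))/(X - r)$ is the quotient obtained by monic polynomial division, extended to general degree-$n-1$ polynomials by counting ordered factorisations. For each root $r$, choose an index $i(r)$ minimising $v_p(r - t_{i(r)})$ and let $e_r := v_p(r - t_{i(r)})$. Factoring $(X - t_{i(r)})$ out of $P$ and a short manipulation yields $P_r(X) \equiv \prod_{j \neq i(r)}(X - t_j) \pmod{p^{e_r}}$, placing the recursion in a shape where a suitably strengthened inductive hypothesis---one permitting a polynomial that agrees with a split polynomial only modulo a specified power of $p$---can be applied at degree $n-1$.

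The main obstacle is the combinatorial reassembly. One must partition $\mathcal{R}(P; p^a)$ by the pair $(i(r), e_r)$, bound the size of each strip (controlled by how many $r$ satisfy $v_p(r - t_{i(r)}) = e_r$ together with $p^a \mid P(r)$), and telescope the inductive contributions across strips to recover $\prod_{j<k}\gcd(t_j - t_k, p^a)$ without overcounting across the different ``sheets'' indexed by $i(r)$. Devising the correct form of the strengthened inductive hypothesis---robust enough to absorb the approximation error in $P_r$ yet sharp enough to yield the Vandermonde bound---is the technical heart of the argument. As a sanity check, in the $n = 2$ case the problem reduces to counting $s_1$ with $p^a \mid (s_1 - t_1)(s_1 - t_2)$; a direct case analysis on $v_p(s_1 - t_1)$ versus $v_p(t_1 - t_2)$ recovers the desired bound $\mathbf{N}(\vec{t}; p^a) \ll \gcd(t_1 - t_2, p^a)$, which in turn yields the conjectured restriction estimate \eqref{discrete restriction} for $n = 2$ via the argument culminating in \eqref{n-fold}.
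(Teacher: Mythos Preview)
The statement you are attempting to prove is presented in the paper as an open \emph{conjecture}; the paper explicitly says ``the problem remains open for $n \geq 4$''. What the paper does establish is: (a) the reduction to prime powers via the Chinese Remainder Theorem together with the Newton--Girard reformulation in terms of factorisations (your first two paragraphs match this exactly, and correspond to Lemma~7.4 and the surrounding discussion); (b) the ``non-degenerate'' case $\delta < \alpha/2$ via multivariate Hensel (Lemma~7.5); (c) the full $n=2$ case by a short direct argument (Corollary~7.6), which agrees with your sanity check; and (d) a remark that $n=3$ can be handled by similar but more involved \emph{ad hoc} reasoning.

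Your proposal does not close the gap. The Hensel splitting into residue classes modulo $p$ is correct and reduces to the clustered case, but the inductive step you describe is a plan rather than an argument: you yourself say that ``devising the correct form of the strengthened inductive hypothesis \dots\ is the technical heart of the argument'', and you do not supply it. The difficulty is genuine. After one step of the recursion $\mathbf{N}(\vec{t};p^a) = \sum_{r} \mathbf{N}(P_r;p^a)$, the polynomial $P_r$ need not split over $\Z/p^a\Z$; it only agrees with a split polynomial modulo $p^{e_r}$, and $e_r$ can be much smaller than $a$. An inductive hypothesis strong enough to accommodate this approximation would need to bound the factorisation count of an arbitrary degree-$(n-1)$ polynomial that is merely close (mod $p^e$) to a split one, in terms of data depending on both $e$ and the $p$-adic configuration of the approximate roots; formulating and proving such a statement is precisely the unresolved obstacle the paper points to. The combinatorial reassembly over the strips $(i(r),e_r)$ is also not carried out, and it is not clear that the strip sizes and the degraded inductive bounds telescope to the Vandermonde product without further loss. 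In short, your reductions are sound and match the paper's, but the core inductive claim is asserted rather than proved, and the paper regards exactly this step as open.
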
 
The right-hand expression in \eqref{congruence conjecture inequality} is written in terms of the absolute value on $\Z/N\Z$ which, by definition, satisfies $N|y|^{-1} = \gcd(y, N)$ for all $y \in \Z/N\Z$. Thus, to establish Conjecture \ref{congruence conjecture} (since the estimate $\mathbf{N}(\vec{t}; N) \leq N^n$ holds trivially) it suffices to show that
\begin{equation*}
 \mathbf{N}(\vec{t}; N) \leq C_{\varepsilon, n} N^{\varepsilon} \prod_{1 \leq j < k \leq n} \gcd(t_j - t_k, N).
\end{equation*}

Assuming the conjecture holds, one is in position to appeal to a multilinear fractional integral inequality of Christ \cite{Christ1985}; although presented in the euclidean setting in \cite{Christ1985}, the statement and proof of the multilinear inequality translate directly into the mod $N$ setting. The result may be stated in the following way: if $F_i$ and $G_{j,k}$ are functions on $[\Z/N\Z]_*$, then the multilinear form 
\begin{equation}\label{g}
\frac{1}{N^n} \sum_{\vec{t}\in [\Z/N\Z]_*^n} \prod_{i=1}^n F_i(t_i)  \prod_{1 \leq j<k \leq n}G_{j,k}(t_j - t_k)
\end{equation}
is dominated by (a constant multiple of)
\begin{equation}\label{g right}
 \prod_{i=1}^n \|F_i \|_{\ell^{\alpha}([\Z/N\Z]_*)} \prod_{1\leq j < k \leq n} \|G_{j,k} \|_{\ell^{\beta,\infty}([\Z/N\Z]_*)}
\end{equation}
whenever $\alpha^{-1} + \beta^{-1}(n-1)/2 \le 1$ and $1\le \alpha < n$, where 
\begin{equation*}
\| G \|_{\ell^{\beta,\infty}([\Z/N\Z]_*)} \ := \ \sup_{\lambda>0} \lambda \, \Bigl( N^{-1} \bigl| \{ t \in \Z/N\Z : G(t) \ge \lambda \}\bigr| \Bigr)^{1/\beta}
\end{equation*}
denotes the weak-type Lorentz norm. The first step in Christ's proof is to observe that a simple interpolation argument bounds \eqref{g} by an expression given by replacing the weak-type norms ${\|\cdot\|_{\ell^{\beta,\infty}([\Z/N\Z]_*)}}$ by the strong-type $\ell^{\beta}$-norms ${\| \cdot \|_{\ell^{\beta}([\Z/N\Z]_*)}}$ in \eqref{g right} in the larger $\ell^{\alpha}$-range $1\le \alpha \le n$. In fact, here this weakened estimate is all one needs. Indeed, the multilinear inequality will be applied to the functions $G_{j,k}(t) := \gcd(t,N)^{\rho-1}$ with $\beta = 1/(\rho-1)$; note that
\begin{align*}
\| \gcd(\cdot,N) \|_{\ell^1([\Z/N\Z]_*)} = \frac{1}{N} \sum_{t\in \Z/N\Z} \gcd(t,N) = \frac{1}{N} \sum_{d \mid N} d \sum_{t: \gcd(t,N) = d} 1,
\end{align*}
where the latter expression is clearly bounded above by the divisor function and therefore grows sub-polynomially in $N$. Since an $\varepsilon$-loss in $N$ is permissible for the present purpose of establishing inequalities of the form \eqref{discrete restriction}, one may work with the strong-type $\ell^{\beta}([\Z/N\Z]_*)-$norms of the $G_{j,k}$. It is remarked that if $N = p^M$ where $p$ is prime, then the divisors are totally ordered and the weak-type norms of $\gcd(\cdot,p^M)$ are uniformly bounded, whereas the $\ell^1([\Z/N\Z]_*)$ norm is equal to $M$. Thus, if one were to restrict $N$ to powers of $p$ and seek stronger Fourier restriction estimates with bounds which are uniform in the power $M$, then the full strength of Christ's multilinear inequality would be needed.

Returning to the present situation, one obtains via the first step of Christ's argument (simple interpolation) the inequality
\begin{equation}\label{multilinear}
\frac{1}{N^n} \sum_{\vec{t}\in [\Z/N\Z]_*^n} \prod_{i=1}^n F(t_i) \prod_{1 \leq j<k \leq n} \gcd(t_j - t_k, N)^{\gamma}
\ \le \ C_{s,\gamma,\varepsilon} N^{\varepsilon} \|F\|_{\ell^{\alpha}([\Z/N\Z]_*)}^n
\end{equation}
for  $\gamma \le 2/n$ and $\alpha^{-1} + \gamma (n-1)/2 \le 1$.

\begin{remark} As in the euclidean setting, the stated range of exponents for \eqref{multilinear} is sharp. In fact, the familiar scaling argument given by taking $F := \chi_{{\mathcal B}_d}$ for $d$ a divisor of $N$ shows necessarily that $\alpha^{-1} + \gamma (n-1)/2 \le 1$. Furthermore, plugging the function $F := 1$ into \eqref{multilinear} shows that $\gamma \le 2/n$ must hold. Indeed, if $N = p^M$ where $M > n$ and $p$ is a prime, then, by restricting the range of summation, the left-hand side of \eqref{multilinear} is bounded below by
\begin{equation*}
\frac{1}{p^{Mn}} \sum_{0 \leq u_1 < \dots < u_{n-1} \leq M} \prod_{i=1}^n \sum_{\substack{t_i = 0 \\[5pt] p^{u_i} \| t_i - t_{i-1}}}^{p^M -1} \prod_{1 \leq j < k \leq n}\gcd(t_j - t_k, p^M)^{\gamma}.
\end{equation*}
Each summand $\prod_{j<k} \gcd(t_j - t_k, p^M)^{\gamma}$ over this restricted range of summation is equal to
\begin{equation*}
\prod\limits_{j=1}^{n-1} \gcd(t_j - t_{j+1}, p^M)^{\gamma (n - j)}
\end{equation*}
and this readily shows that
\begin{equation*}
p^{-Mn} \sum_{\vec{t}\in [{\mathbb Z}/p^M{\mathbb Z}]^n} \prod_{1 \leq j<k \leq n} \gcd(t_j - t_k, p^M)^{\gamma} \ \ge \
2^{-(n-1)} p^{(n-1) (\gamma n /2 - 1) M},
\end{equation*}
forcing $\gamma \le 2/n$.
\end{remark}

Assuming Conjecture \ref{congruence conjecture} one may apply \eqref{multilinear} to bound the right-hand side of \eqref{n-fold} with $\gamma = \rho-1$ and $\alpha$ satisfying $\alpha \rho = r'$ (the restriction $\gamma \le 2/n$ needed for the application of \eqref{multilinear} is equivalent to $r' \ge n(n+2)/2$ and the condition $\alpha^{-1} + \gamma(n-1)/2 \le 1$ is equivalent to $r' \ge n(n+1)/2 s$). Combining this inequality with \eqref{convineq}, one concludes that, conditionally on Conjecture \ref{congruence conjecture}, the desired restriction estimate \eqref{dual-result} holds with $s n(n+1)/2 \le r'$ in the range $r' \ge n(n+2)/2$.




\subsection{Remarks and partial progress towards Conjecture \ref{congruence conjecture}} In the previous subsection restriction estimates for the moment curve $\{ (t, t^2, \dots, t^n) : t \in \Z/N\Z\}$ were shown to follow from (the purely number-theoretic) Conjecture \ref{congruence conjecture} which concerns the number of mutually incongruent solutions to a simple system of equations. In particular, for each fixed $\vec{y} \in [\Z/N\Z]^n$ one wishes to determine an upper bound for the number $\mathbf{N}(\vec{y}; N)$ of solutions in $[\Z/N\Z]^n$ to the polynomial system
\begin{equation}\label{system of congruences}
\begin{array}{rcl}
X_1 + \dots + X_n     &\equiv& y_1   + \dots + y_n \\
& \vdots & \\
X_1^n + \dots + X_n^n &\equiv& y_1^n + \dots + y_n^n 
\end{array} \mod N. 
\end{equation}
This problem is arguably of interest in its own right; for instance, it can be reinterpreted as a natural question regarding factorisations of polynomials over $\Z/N\Z$. 

\begin{lemma} Suppose every prime factor $p$ of $N \in \N$ satisfies $p > n$ and that $F \in \Z[X]$ splits over $\Z/N\Z$, so that $F(X) \equiv \prod_{j=1}^n (X-y_j) \bmod N$ for some choice of roots $\vec{y} = (y_1, \dots, y_n) \in [\Z/N\Z]^n$. The number of ways $F$ can be factorised as a product of linear factors over $\Z/N\Z$ is $\mathbf{N}(\vec{y}; N)$.  
\end{lemma}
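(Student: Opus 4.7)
The plan is to identify a factorisation of $F$ with an ordered tuple of roots and then show that the condition of being a root tuple of $F$ is equivalent, modulo $N$, to the system of power-sum congruences defining $\mathbf{N}(\vec{y}; N)$.

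First, I would observe that a factorisation $F(X) \equiv \prod_{j=1}^n (X - x_j) \bmod N$ is determined exactly by the ordered tuple $\vec{x} = (x_1, \dots, x_n) \in [\Z/N\Z]^n$, and that two such products agree modulo $N$ if and only if all their coefficients agree modulo $N$. By Vieta's formulas, the coefficients are (up to sign) the elementary symmetric polynomials $e_1, \dots, e_n$ evaluated at the root tuple, so the number of factorisations of $F$ equals
\begin{equation*}
\#\bigl\{ \vec{x} \in [\Z/N\Z]^n : e_k(\vec{x}) \equiv e_k(\vec{y}) \bmod N \textrm{ for all } 1 \leq k \leq n \bigr\}.
\end{equation*}

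Next I would invoke Newton's identities, which express the elementary symmetric polynomials and the power sums $p_k(X_1,\dots,X_n) = X_1^k + \cdots + X_n^k$ in terms of one another via the recursion
\begin{equation*}
k\, e_k = \sum_{j=1}^{k} (-1)^{j-1} e_{k-j} \, p_j \qquad (1 \leq k \leq n).
\end{equation*}
Since every prime factor of $N$ is strictly greater than $n$, each of $1, 2, \dots, n$ is a unit in $\Z/N\Z$, and so this recursion can be inverted modulo $N$. Consequently, for any tuple $\vec{x}$ the values $(e_1(\vec{x}), \dots, e_n(\vec{x}))$ and $(p_1(\vec{x}), \dots, p_n(\vec{x}))$ are related by invertible polynomial transformations modulo $N$. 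In particular,
\begin{equation*}
e_k(\vec{x}) \equiv e_k(\vec{y}) \bmod N \textrm{ for all } k  \iff  p_k(\vec{x}) \equiv p_k(\vec{y}) \bmod N \textrm{ for all } k.
\end{equation*}

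Finally, recognising that the right-hand system is precisely $\Phi(\vec{x}) = \Phi(\vec{y})$ from the definition of $\mathbf{N}(\vec{y}; N)$, the count of factorisations coincides with $\mathbf{N}(\vec{y};N)$, finishing the proof. The only real subtlety — and the place where the hypothesis on the prime factors of $N$ is used — is the invertibility of $1, 2, \dots, n$ needed to solve Newton's recursion both ways; without it, one has only the one-way implication that power sums determine elementary symmetric functions (or vice versa) and the two counting problems need no longer coincide.
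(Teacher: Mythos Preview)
Your proposal is correct and follows essentially the same approach as the paper's proof: both identify factorisations with ordered root tuples via Vieta (elementary symmetric functions), then use the Newton--Girard identities together with the invertibility of $1,2,\dots,n$ in $\Z/N\Z$ (guaranteed by the hypothesis on the prime factors of $N$) to pass between the elementary symmetric conditions and the power-sum system defining $\mathbf{N}(\vec{y};N)$. Your write-up is slightly more explicit about the recursion and where the hypothesis enters, but the argument is the same.
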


\begin{proof} Under the hypotheses of the lemma, it suffices to show that the set of solutions to \eqref{system of congruences} is precisely
\begin{equation*}
\big\{\vec{x} \in [\Z/N\Z]^n : \prod_{j=1}^n (X-x_j) \equiv \prod_{j=1}^n (X-y_j) \bmod N \big\}.
\end{equation*}
Since the coefficients of a polynomial are elementary symmetric functions of the roots, it follows that $\prod_{j=1}^n (X-x_j) \equiv \prod_{j=1}^n (X-y_j) \bmod N$ for some $\vec{x} \in [\Z/N\Z]^n$ if and only if $e_k(\vec{x}\,) \equiv e_k(\vec{y}\,) \bmod N$ for $1 \leq k \leq n$, where $e_k \in \Z[X_1, \dots, X_n]$ is the $k$th elementary symmetric polynomial. By the classical Newton--Girard formul\ae, if $p > n$, then this is equivalent to the condition that $\vec{x}$ solves \eqref{system of congruences}. 
\end{proof}

Although Conjecture \ref{congruence conjecture} remains open, there has been some partial progress on the problem. First observe that, by the Chinese remainder theorem, the function $\mathbf{N}(\vec{y}; N)$ is multiplicative in $N$ and it therefore suffices to prove that 
\begin{equation*}
\mathbf{N}(\vec{y}; p^{\alpha}) \leq C_n \prod_{1 \leq j < k \leq n} \gcd(y_j - y_k, p^{\alpha}) 
\end{equation*}
uniformly over all primes $p > n$ and $\alpha \in \N$. Indeed, this follows from the asymptotics for the distinct divisor function $\omega(N) := \sum_{p \mid N} 1$, as discussed in the previous section. 

If the $y_1, \dots, y_n$ are sufficiently separated in the $p$-adic sense, then Conjecture \ref{congruence conjecture} is a simple consequence of Hensel's classical lemma. 

\begin{lemma}\label{non-degenerate lemma} Suppose $p > n$ and $\alpha \in \N$. If $\vec{y} = (y_1, \dots, y_n) \in [\Z/p^{\alpha}\Z]^n$ satisfies $\delta <\alpha/2$ where $p^{\delta} \| \prod_{j < k} (y_j - y_k)$, then  
\begin{equation*}
\mathbf{N}(\vec{y}; p^{\alpha}) \leq n!p^{\delta} = n!p^{n(n-1)/2} \prod_{1 \leq j < k \leq n} |y_j - y_k|^{-1}.
\end{equation*}
\end{lemma}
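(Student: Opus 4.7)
My plan is to count the monic linear factorisations
\[
\prod_{i=1}^n(X - x_i) \equiv F(X) \pmod{p^\alpha}, \qquad F(X) := \prod_{k=1}^n(X - y_k),
\]
which by the preceding lemma equals $\mathbf{N}(\vec{y}; p^{\alpha})$. Setting $e_{jk} := v_p(y_j - y_k)$ so that $\delta = \sum_{j<k}e_{jk}$, I first exploit the rigidity of the discriminant: any such factorisation forces
\[
\prod_{1 \leq i < j \leq n}(x_i - x_j)^2 \equiv \prod_{1 \leq j < k \leq n}(y_j - y_k)^2 \pmod{p^\alpha},
\]
and since the right-hand side has $p$-adic valuation $2\delta < \alpha$, so too does the left-hand side, forcing $\sum_{i<j}v_p(x_i - x_j) = \delta$. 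In particular every pairwise valuation $v_p(x_i - x_j)$ is at most $\delta$.

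Next I invoke the multi-factor Hensel lemma. Fixing arbitrary $\Z_p$-lifts of $\vec{x}$ and $\vec{y}$, the pairwise resultants $\mathrm{Res}(X - x_i, X - x_j) = x_i - x_j$ all satisfy $v_p \leq \delta < \alpha/2$, so Hensel lifts the congruence $\prod_i(X - x_i) \equiv F(X) \pmod{p^\alpha}$ to a genuine factorisation $F(X) = \prod_i(X - \tilde{x}_i)$ in $\Z_p[X]$ with $\tilde{x}_i \equiv x_i \pmod{p^{\alpha - \delta}}$. Uniqueness of factorisation over $\Z_p$ then forces $\{\tilde{x}_1, \dots, \tilde{x}_n\} = \{y_1, \dots, y_n\}$ as multisets, producing a permutation $\sigma \in S_n$ with $x_i \equiv y_{\sigma(i)} \pmod{p^{\alpha - \delta}}$. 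Since the $y_k$'s are themselves distinct modulo $p^{\alpha - \delta}$ (as $e_{jk} \leq \delta < \alpha - \delta$), this $\sigma$ is determined by $\vec{x}$, so each solution is associated with precisely one permutation.

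It remains, for each fixed $\sigma \in S_n$, to count solutions with $x_i \equiv y_{\sigma(i)} \pmod{p^{\alpha - \delta}}$. Writing $x_i = y_{\sigma(i)} + \epsilon_i$ with $\epsilon_i \in p^{\alpha - \delta}\Z/p^\alpha\Z$, and observing that every term quadratic or higher in the $\epsilon_i$ contributes valuation at least $2(\alpha - \delta) > \alpha$, the factorisation condition collapses to the linear equation
\[
\sum_{i=1}^n \epsilon_i L_i(X) \equiv 0 \pmod{p^\alpha}, \qquad L_i(X) := \prod_{j \neq i}(X - y_{\sigma(j)}).
\]
Matching coefficients gives $A \vec{\epsilon} \equiv \vec{0} \pmod{p^\alpha}$, where the matrix $A$ satisfies $VA = D$ with $V$ the Vandermonde matrix of $(y_{\sigma(1)}, \dots, y_{\sigma(n)})$ and $D = \mathrm{diag}(L_i(y_{\sigma(i)}))$. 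Hence $v_p(\det A) = 2\delta - \delta = \delta$, and since all invariant factors of $A$ are bounded by $\delta < \alpha$, Smith normal form yields $|\ker(A \bmod p^\alpha)| = p^\delta$. This bounds the count per $\sigma$ by $p^\delta$, giving $n! \, p^\delta$ in total. The main technical hurdle is the Hensel step, since a naive single-factor lifting would demand $(n-1)\delta < \alpha/2$, far stronger than the assumed $\delta < \alpha/2$; the argument succeeds only by combining the simultaneous multi-factor form with the essentially sharp pairwise bound $v_p(x_i - x_j) \leq \delta$ extracted in the first step.
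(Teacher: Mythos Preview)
Your argument is correct and mirrors the paper's proof: both establish discriminant rigidity $v_p\big(\prod_{i<j}(x_i-x_j)\big)=\delta$, apply Hensel to lift modulo $p^{\alpha-\delta}$ and thereby associate a permutation in $S_n$ to each solution, then linearise around that permutation and count kernel elements of a matrix whose determinant has valuation exactly $\delta$ (the paper works with the power-sum map $\Phi$ and its Jacobian $n!\prod_{j<k}(y_j-y_k)$, while you use the equivalent elementary-symmetric/factorisation picture via Newton--Girard). One small correction of phrasing: the hypothesis that actually drives your Hensel step is the \emph{total} discriminant bound $v_p\big(\prod_{i<j}(x_i-x_j)\big)=\delta<\alpha/2$ (which you have established), not the individual pairwise bounds $v_p(x_i-x_j)\le\delta$ you cite.
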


This bound quickly leads to a resolution of Conjecture \ref{congruence conjecture} in the $n=2$ case.

\begin{corollary}\label{2d congruence corollary} Suppose $p$ is an odd prime and $\alpha \in \N$. For all $\vec{y} =(y_1, y_2) \in [\Z/p^{\alpha}\Z]^2$ one has
\begin{equation}\label{2d congruence}
\mathbf{N}(\vec{y}; p^{\alpha}) \leq 2 p |y_1 - y_2|^{-1}.
\end{equation}
\end{corollary}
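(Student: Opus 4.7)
The plan is to exploit the symmetric identity $(x_1-x_2)^2 = 2(x_1^2+x_2^2) - (x_1+x_2)^2$ to collapse the two defining congruences for $\mathbf{N}(\vec{y}; p^{\alpha})$ into a single quadratic one. Write $s := y_1+y_2$ and $d := y_1-y_2$, and introduce the new variables $u := x_1+x_2$, $v := x_1-x_2$; because $p$ is odd, $2$ is a unit modulo $p^{\alpha}$, so $(x_1,x_2) \mapsto (u,v)$ is a bijection of $[\Z/p^{\alpha}\Z]^2$. Under this change of variables the system
$$x_1+x_2 \equiv s, \qquad x_1^2+x_2^2 \equiv y_1^2+y_2^2 \pmod{p^{\alpha}}$$
is equivalent to $u \equiv s$ (determining $u$ uniquely) together with $v^2 \equiv d^2 \pmod{p^{\alpha}}$, so
$$\mathbf{N}(\vec{y}; p^{\alpha}) \;=\; \#\bigl\{ v \in \Z/p^{\alpha}\Z : v^2 \equiv d^2 \pmod{p^{\alpha}}\bigr\}.$$

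The second step is to count these square roots by cases on $e := v_p(d)$, with the convention $e := \alpha$ when $d \equiv 0 \pmod{p^{\alpha}}$. In the non-degenerate regime $2e < \alpha$ I would write $d = p^e d'$ with $p \nmid d'$; any solution must then satisfy $v_p(v) = e$, so $v = p^e w$ and the congruence reduces to $w^2 \equiv d'^2 \pmod{p^{\alpha-2e}}$. Since $p$ is odd and $d'$ is a unit, Hensel's lemma supplies exactly the two solutions $w \equiv \pm d' \pmod{p^{\alpha-2e}}$; each lifts to $p^e$ residues of $w$ modulo $p^{\alpha-e}$, yielding $2p^e = 2\gcd(d,p^{\alpha})$ values of $v$. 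In the degenerate regime $2e \geq \alpha$ the congruence becomes $v^2 \equiv 0 \pmod{p^{\alpha}}$, forcing $p^{\lceil\alpha/2\rceil} \mid v$ and giving $p^{\lfloor\alpha/2\rfloor}$ solutions, which is bounded by $2p^{\lceil\alpha/2\rceil} \leq 2\gcd(d,p^{\alpha})$. In either regime one concludes that $\mathbf{N}(\vec{y}; p^{\alpha}) \leq 2\gcd(y_1-y_2, p^{\alpha})$, which is precisely $2p^{\alpha}|y_1-y_2|^{-1}$ in the notation of the paper.

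The argument is essentially a streamlined version of Lemma~\ref{non-degenerate lemma} specialised to $n=2$: the range $2e < \alpha$ is exactly where that lemma applies (and one could quote it directly to obtain $\mathbf{N} \leq 2! \, p^e$), while the range $2e \geq \alpha$ falls outside its hypotheses and has to be handled by the elementary square-root count above. No serious obstacle is anticipated; the only points requiring care are using that $2$ is a unit modulo $p^{\alpha}$ both to make the $(u,v)$ substitution bijective and to justify Hensel applied to $w^2 - d'^2$, and correctly tracking the extra divisibility of $v$ in the degenerate case.
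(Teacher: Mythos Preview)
Your proof is correct and follows essentially the same approach as the paper: both reduce via the identity $(x_1-x_2)^2 = 2(x_1^2+x_2^2) - (x_1+x_2)^2$ to counting $v$ with $v^2 \equiv (y_1-y_2)^2 \pmod{p^{\alpha}}$, and both split into the cases $2e < \alpha$ and $2e \geq \alpha$. The only cosmetic difference is that the paper dispatches the non-degenerate case by quoting Lemma~\ref{non-degenerate lemma}, whereas you carry out the square-root count directly; your explicit $(u,v)$ change of variables makes the argument slightly more self-contained.
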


\begin{proof} Let $p^{\delta} \| \gcd(y_1 - y_2, p^{\alpha})$. Lemma \ref{non-degenerate lemma} implies that \eqref{2d congruence} holds whenever $\delta <\alpha/2$ and so one may assume without loss of generality that $\delta \geq \alpha/2$. Thus, in particular, 
\begin{equation}\label{close y}
p^{\lceil \alpha/2 \rceil} | (y_1 - y_2). 
\end{equation}

Let $\vec{x} \in [\Z/p^{\alpha}\Z]^2$ be a solution to the system
\begin{equation}\label{2d system}
\begin{array}{rcl}
X_1 + X_2     &\equiv& y_1 + y_2 \\
X_1^2 + X_2^2 &\equiv& y_1^2 + y_2^2
\end{array} \mod p^{\alpha}.
\end{equation}
By the elementary formula $(X_1 - X_2)^2 = 2(X_1^2 + X_2^2) - (X_1 + X_2)^2$ one deduces that $(x_1 - x_2)^2 \equiv (y_1 - y_2)^2 \bmod p^{\alpha}$ and, recalling \eqref{close y}, the solution $\vec{x}$ satisfies $p^{\lceil \alpha/2 \rceil} | (x_1 - x_2)$.  Consequently, $x_1$ is uniquely determined modulo $p^{\lceil \alpha/2 \rceil}$ by $\vec{y}$ whilst, by the first equation in \eqref{2d system}, $x_2$ is determined by $x_1$ and $\vec{y}$ modulo $p^{\alpha}$. One now concludes that there are at most $p^{\alpha - \lceil \alpha/2 \rceil} \leq p^{\delta}$ solutions in this case.
\end{proof}

Combining the above solution count with the analysis of the previous subsection, one obtains the following discrete analogue of the Fefferman--Zygmund restriction theorem \cite{Zygmund1974} in the plane. 

\begin{theorem}\label{sharp restriction theorem} If $1 \leq r, s \leq \infty$ satisfy $r' \geq 4$ and $r' \geq 3s$, then for all $\varepsilon > 0$ there exists a constant $C_{\varepsilon, r, s} > 0$ such that  
\begin{equation*}
\Big(\frac{1}{N}\sum_{t \in \Z/N\Z} |\hat{F}(t, t^2)|^s \Big)^{1/s} \leq C_{\varepsilon, r,s}N^{\varepsilon} \Big(\sum_{\vec{x} \in [\Z/N\Z]^2} |F(\vec{x}\,)|^r \Big)^{1/r}
\end{equation*}
holds for all odd $N$.
\end{theorem}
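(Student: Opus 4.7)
The plan is to simply assemble the ingredients that have already been developed in the preceding pages, since the $n=2$ case of the number-theoretic Conjecture \ref{congruence conjecture} has been established in Corollary \ref{2d congruence corollary}. By duality, the theorem is equivalent to the extension estimate
\begin{equation*}
\|(H \, \mathrm{d}\mu)\,\widecheck{}\;\|_{\ell^{r'}([\Z/N\Z]^2)} \le C_{\varepsilon, r, s} N^{\varepsilon} \| H \|_{\ell^{s'}_{\mathrm{avg}}([\Z/N\Z]_*)},
\end{equation*}
where $\mu$ denotes the measure on the parabola. First I would square the left-hand side and apply the Hausdorff--Young inequality (with exponent $\rho$ satisfying $2\rho' = r'$, which lies in $[1,2]$ since $r' \geq 4$) to reduce the problem to bounding $\|(H \, \mathrm{d}\mu) \ast (H \, \mathrm{d}\mu)\|_{\ell^{\rho}([\Z/N\Z]_*^2)}$.

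Next I would expand the convolution as a sum over pairs $\vec{t} = (t_1, t_2)$ grouped by the level sets of $\Phi(\vec{t}\,) = \gamma(t_1) + \gamma(t_2)$ with $\gamma(s) := (s, s^2)$, and apply H\"older's inequality on the inner sum over solutions. Following exactly the computation leading to \eqref{n-fold} in the paper, this produces the bound
\begin{equation*}
\|(H \, \mathrm{d}\mu) \ast (H \, \mathrm{d}\mu)\|_{\ell^{\rho}([\Z/N\Z]_*^2)}^{\rho} \le \frac{1}{N^2} \sum_{\vec{t}\in [\Z/N\Z]_*^2} |H(t_1)|^{\rho} |H(t_2)|^{\rho} \, \mathbf{N}(\vec{t};N)^{\rho-1}.
\end{equation*}

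Now the crucial input enters: Corollary \ref{2d congruence corollary} gives $\mathbf{N}(\vec{t}; p^{\alpha}) \le 2p \,|t_1 - t_2|^{-1}$ uniformly in odd primes $p$ and $\alpha \in \N$, and by multiplicativity of $\mathbf{N}(\vec{t}; \cdot)$ in the modulus together with the standard divisor bound, this upgrades to $\mathbf{N}(\vec{t}; N) \le C_{\varepsilon} N^{\varepsilon} \gcd(t_1 - t_2, N)$ for all odd $N$. Substituting and invoking the $n=2$ case of the multilinear inequality \eqref{multilinear} with $F := |H|^{\rho}$, $\gamma := \rho - 1$, and $\alpha := r'/\rho$, the estimate closes. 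The admissibility conditions $\gamma \le 2/n = 1$ and $\alpha^{-1} + \gamma(n-1)/2 \le 1$ become exactly $r' \ge 4$ and $r' \ge 3s$, recovering the stated range.

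There is essentially no remaining obstacle here, since both the Prestini--Christ scheme and the solution count have been handled in the body of the paper; the work is confined to verifying that the exponents match up. The real difficulty in generalising beyond $n=2$ lies in Conjecture \ref{congruence conjecture}, which for $n=2$ collapses to the elementary identity $(x_1 - x_2)^2 = 2(x_1^2 + x_2^2) - (x_1 + x_2)^2$ exploited in the proof of Corollary \ref{2d congruence corollary}; no analogous trick appears to be available for higher $n$.
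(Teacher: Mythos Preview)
Your proposal is correct and follows exactly the Prestini--Christ scheme laid out in the paper's ``Sufficient conditions'' subsection, with Corollary~\ref{2d congruence corollary} supplying the $n=2$ solution count that makes the argument unconditional. One small slip: in the application of \eqref{multilinear} the exponent should be $\alpha = s'/\rho$ (so that $\|\,|H|^{\rho}\|_{\ell^{\alpha}} = \|H\|_{\ell^{s'}}^{\rho}$ lands on the correct norm), not $\alpha = r'/\rho$; with that correction the condition $\alpha^{-1} + (\rho-1)/2 \le 1$ indeed unwinds to $r' \ge 3s$ as you claim.
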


The range of Lebesgue exponents in Theorem \ref{sharp restriction theorem} is sharp, as shown by the discussion in $\S$\ref{necessary conditions subsection}.

The proof of Lemma \ref{non-degenerate lemma} relies on the following (well-known) multivariate version of Hensel's classical lemma. 

\begin{lemma}[Hensel]  Let $f_1, \dots, f_n$ be polynomials in $\Z_p[X_1, \dots, X_n]$ and consider the polynomial mapping $\vec{f} := (f_1, \dots, f_n)$. Suppose $\vec{x} \in \Z^n$ satisfies the system of congruences $\vec{f}(\vec{x}\,)\equiv 0 \mod p^s$ and, further, that $p^{\delta} \| J_{\vec{f}}\,(\vec{x}\,)$ with $2\delta < s$, where $J_{\vec{f}}\,(\vec{x}\,)$ denotes the Jacobian determinant of $\vec{f}$ at $\vec{x}$. Then there exists a unique $\vec{x}_* \in  \Z_p^n$ such that $\vec{f}(\vec{x}_*) = 0$ and $\vec{x}_* \equiv \vec{x} \mod p^{s-\delta}$. 
\end{lemma}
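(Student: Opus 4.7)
The plan is to prove this via Newton's method adapted to the $p$-adic setting, mirroring the classical one-variable Hensel argument. I would set $\vec{x}_0 := \vec{x}$ and inductively define $\vec{x}_{k+1} := \vec{x}_k + \vec{h}_k$, where $\vec{h}_k \in \Q_p^n$ is chosen to solve the linear system $D\vec{f}\,(\vec{x}_k)\,\vec{h}_k = -\vec{f}(\vec{x}_k)$. Cramer's rule gives $\vec{h}_k = -\mathrm{adj}(D\vec{f}\,(\vec{x}_k))\vec{f}(\vec{x}_k)/J_{\vec{f}}\,(\vec{x}_k)$, so if the Jacobian determinant retains $p$-adic valuation exactly $\delta$ throughout the iteration, then one obtains the bound $|\vec{h}_k|_p \leq p^{\delta}|\vec{f}(\vec{x}_k)|_p$.

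Next, I would exploit the Taylor expansion of polynomials with $\Z_p$-coefficients: one has $\vec{f}(\vec{x}_k + \vec{h}_k) = \vec{f}(\vec{x}_k) + D\vec{f}\,(\vec{x}_k)\vec{h}_k + \vec{R}_k$ where $\vec{R}_k$ is a polynomial in $\vec{h}_k$ with no constant or linear part and with coefficients in $\Z_p$, so that $|\vec{R}_k|_p \leq |\vec{h}_k|_p^2$. By the defining equation for $\vec{h}_k$, the first two terms cancel, yielding the quadratic-convergence estimate $|\vec{f}(\vec{x}_{k+1})|_p \leq p^{2\delta}|\vec{f}(\vec{x}_k)|_p^2$. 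Setting $s_k := v_p(\vec{f}(\vec{x}_k))$, this reads $s_{k+1} - 2\delta \geq 2(s_k - 2\delta)$, and starting from $s_0 - 2\delta \geq s - 2\delta > 0$ (by the hypothesis $2\delta < s$) I conclude $s_k \to \infty$ at a doubly exponential rate. Combined with $|\vec{h}_k|_p \leq p^{\delta - s_k}$, this forces $(\vec{x}_k)$ to be a Cauchy sequence in $\Z_p^n$ with limit $\vec{x}_*$ satisfying $\vec{f}(\vec{x}_*) = 0$ by continuity, and a telescoping estimate gives $|\vec{x}_* - \vec{x}_0|_p \leq p^{\delta - s}$, i.e.\ $\vec{x}_* \equiv \vec{x} \bmod p^{s-\delta}$.

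The subtle point — which I expect to be the main obstacle — is verifying that $v_p(J_{\vec{f}}\,(\vec{x}_k)) = \delta$ persists for all $k$, so that the Newton step is well-defined and the estimate on $|\vec{h}_k|_p$ remains valid. This in turn requires $|\vec{x}_{k+1} - \vec{x}_k|_p < p^{-\delta}$. At $k=0$ one has $|\vec{h}_0|_p \leq p^{\delta - s} < p^{-\delta}$ since $s > 2\delta$; by induction the increments only get smaller, so $|\vec{x}_k - \vec{x}_0|_p < p^{-\delta}$ throughout. Applying the Taylor expansion to the polynomial $J_{\vec{f}}$ itself and invoking the non-archimedean triangle inequality then shows $v_p(J_{\vec{f}}\,(\vec{x}_k)) = v_p(J_{\vec{f}}\,(\vec{x}_0)) = \delta$, closing the induction.

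Finally, for uniqueness I would suppose $\vec{x}_*, \vec{x}_*' \in \Z_p^n$ are two roots of $\vec{f}$ congruent to $\vec{x}$ modulo $p^{s-\delta}$ and set $\vec{e} := \vec{x}_* - \vec{x}_*'$. The Taylor expansion of $\vec{f}$ at $\vec{x}_*'$ gives $0 = \vec{f}(\vec{x}_*) - \vec{f}(\vec{x}_*') = D\vec{f}\,(\vec{x}_*')\vec{e} + \vec{R}$ with $|\vec{R}|_p \leq |\vec{e}|_p^2$. Since $v_p(J_{\vec{f}}\,(\vec{x}_*')) = \delta$ (by the same persistence argument), inverting the linear map forces $|\vec{e}|_p \leq p^{\delta}|\vec{e}|_p^2$, so either $\vec{e} = \vec{0}$ or $|\vec{e}|_p \geq p^{-\delta}$; but the congruence condition ensures $|\vec{e}|_p \leq p^{\delta - s} < p^{-\delta}$, so $\vec{e} = \vec{0}$, completing the proof.
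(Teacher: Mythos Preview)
The paper does not supply its own proof of this lemma: immediately after the statement it simply writes ``For a proof of this version of Hensel's lemma see, for instance, \cite{Wooley1996} or \cite{Greenberg1969}*{Proposition 5.20}.'' Your Newton-iteration argument is correct and is in fact the standard approach found in those references; the key bookkeeping points (persistence of the Jacobian valuation via the ultrametric inequality, the quadratic convergence $s_{k+1} - 2\delta \geq 2(s_k - 2\delta)$, and the uniqueness dichotomy $|\vec{e}|_p \leq p^{\delta}|\vec{e}|_p^2$) are all handled properly.
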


For a proof of this version of Hensel's lemma see, for instance, \cite{Wooley1996} or \cite{Greenberg1969}*{Proposition 5.20}.

\begin{proof}[Proof (of Lemma \ref{non-degenerate lemma})] Fix $\vec{y} \in [{\mathbb Z}/p^{\alpha}{\mathbb Z}]^n$ satisfying the hypotheses of the lemma. Recall that one wishes to estimate the number $\mathbf{N}(\vec{y}; p^{\alpha})$ of solutions $\vec{x} \in [{\mathbb Z}/p^{\alpha}{\mathbb Z}]^n$ to the system of congruences \eqref{system of congruences}. Recalling the mapping 
\begin{equation*}
\Phi(X_1, \dots, X_n) := \big( X_1 + \cdots + X_n, \dots, X_1^n + \cdots + X_n^n\big)
\end{equation*}
introduced in the previous subsection, this system can be concisely written as
\begin{equation}\label{congruence}
\Phi(\vec{X}) \ \equiv \ \Phi(\vec{y}\,) \mod p^{\alpha}.
\end{equation}
Let $\Phi'(\vec{x}\,)$ denote the Jacobian matrix of first order partial derivatives of the components of $\Phi$ and $J_{\Phi}(\vec{x}\,) = n!\prod_{j<k} (x_j - x_k)$ the corresponding determinant. The hypothesis on $\vec{y}$ is therefore that $p^{\delta} \, \| \ J_{\Phi}(\vec{y}\,)$ for $\delta < \alpha/2$. If $\vec{x}$ is a solution to \eqref{congruence}, then $p^{\delta} \, \| \ J_{\Phi}(\vec{x}\,)$ also holds. Indeed, this simply follows by expressing the symmetric discriminant $\prod_{j<k} (X_j - X_k)^2$ as a polynomial of the symmetric power functions $\sum_{j=1}^n X_j^k$ via the Newton--Girard formul\ae, from which one concludes that 
\begin{equation*}
\prod_{1 \leq j<k \leq n} (x_j - x_k)^2 \equiv \prod_{1 \leq j<k \leq n} (y_j - y_k)^2 \mod p^{\alpha}
\end{equation*}
for a solution $\vec{x}$ to \eqref{congruence}.\footnote{This argument is carried out explicitly for $n=2$ in the proof of Corollary \ref{2d congruence corollary}} One is now in a position to apply Hensel's lemma which shows that for every solution $\vec{x}$ of \eqref{congruence} there is a unique $p$-adic solution $\vec{z} \in {\mathbb Z}_p^n$ to $\Phi(\vec{z}\,) = \Phi(\vec{y}\,)$ in ${\mathbb Z}_p^n$ such that ${\vec{x}} \equiv {\vec{z}} \bmod p^{\alpha-\delta}$. Since ${\mathbb Z}_p$ is an integral domain with characteristic 0, a standard argument using the Newton--Girard 
 formul\ae\, shows that there are at most $n!$ $p$-adic solutions ${\vec{z}}$, which all arise by permuting the components of the solution $\vec{y} = (y_1,\ldots,y_n)$. It therefore suffices to count the solutions $\vec{x}$ to \eqref{congruence} which satisfy $\vec{x} \equiv \vec{y} \bmod p^{\alpha - \delta}$; all other solutions arise by permuting the components of some solution $\vec{x}$ of this form and so the total solution count $\mathbf{N}(\vec{y}; p^{\alpha})$ will differ from this partial count by at most a factor of $n!$. For such a solution $\vec{x} \in [\Z/p^{\alpha}\Z]^n$ one has
\begin{equation*}
\vec{x} = \vec{y}_{\delta} + p^{\alpha-\delta} \vec{x}_{\alpha-\delta} \quad \textrm{and} \quad \vec{y} = \vec{y}_{\delta} + p^{\alpha-\delta} \vec{y}_{\alpha-\delta}
\end{equation*}
for some $\vec{y}_{\delta}, \vec{x}_{\alpha-\delta}, \vec{y}_{\alpha-\delta} \in [\Z/p^{\alpha}\Z]^n$.  Since $\delta < \alpha/2$, it follows that 
\begin{equation*}
\begin{array}{rcl}
\Phi(\vec{x}\,) &\equiv& \Phi(\vec{y}_{\delta}) + p^{\alpha-\delta} \Phi'(\vec{y}_{\delta}) \vec{x}_{\alpha-\delta} \\
\Phi(\vec{y}\,) &\equiv& \Phi(\vec{y}_{\delta}) + p^{\alpha-\delta} \Phi'(\vec{y}_{\delta}) \vec{y}_{\alpha-\delta} 
\end{array} 
\mod p^{\alpha}
\end{equation*}
and so 
\begin{equation}\label{matrix equation}
\Phi'(\vec{y}_{\delta}) (\vec{x}_{\alpha-\delta} - \vec{y}_{\alpha-\delta} ) \ \equiv \ \vec{0} \mod p^{\delta}.
\end{equation}
Note that $J_{\Phi}(\vec{y}\,) \equiv J_{\Phi}(\vec{y}_{\delta}) \bmod p^{\alpha - \delta}$ and therefore, again using the hypothesis $\delta < \alpha/2$, one deduces that $p^{\delta} \, \|  J_{\Phi}(\vec{y}_{\delta})$. Applying Lemma \ref{linear system lemma}, one concludes that there are at most $p^{\delta}$ solutions $\vec{x}_{\alpha - \delta} \in [\Z/p^{\alpha}\Z]^n$ to \eqref{matrix equation} which are mutually incongruent modulo $p^{\delta}$. This immediately yields the desired bound on $\mathbf{N}(\vec{y};p^{\alpha})$. 
\end{proof}

The $n=3$ case of Conjecture \ref{congruence conjecture} can also be treated using similar (but somewhat more involved) arguments. This line of reasoning tends to be rather \emph{ad hoc}, however, and it is unclear whether it can produce a systematic approach which resolves the conjecture for all values of $n$ (already in the $n=4$ case significant complications arise and, indeed, the problem remains open for $n \geq 4$). 

A counterpoint to Lemma \ref{non-degenerate lemma} was established by the authors in \cite{Hickman2} 

\begin{proposition}[\cite{Hickman2}]\label{degenerate proposition}  If $n = r(r+1)/2$ for some $r \in \N$ with $r \geq 2$ and $p > n$ is prime, then\footnote{The notation of the present article differs slightly with that of \cite{Hickman2}: in the latter, $\mathbf{N}(\vec{0}_n; p^{\alpha})$ denotes a \emph{normalised} solution count.}
\begin{equation*}
\mathbf{N}(\vec{0}_n; p^{\alpha}) \leq C_n  \alpha p^{\alpha (n-r)}
\end{equation*}
holds for all $\alpha \in \N$. The result is sharp in the sense that, provided  $n\neq 3$ and $p$ is sufficiently large depending only on $n$, the reverse inequality also holds for infinitely many $\alpha$. 
\end{proposition}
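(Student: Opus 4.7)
I would begin by reformulating the count: $\mathbf{N}(\vec{0}_n; p^{\alpha})$ equals the number of ordered $n$-tuples $\vec{x} \in [\Z/p^{\alpha}\Z]^n$ with $\prod_{j=1}^n(X - x_j) \equiv X^n \bmod p^{\alpha}$, equivalently with all elementary symmetric polynomials $e_k(\vec{x})$ vanishing modulo $p^{\alpha}$; the equivalence uses $p > n$ via the Newton--Girard formul\ae, as exploited earlier in the excerpt. Reducing mod $p$ forces $\bar x_j = 0$ in $\mathbb{F}_p$, so one may write $x_j = p^{c_j} u_j$ with $c_j \geq 1$ and stratify solutions by the multiset of $p$-adic valuations $\{c_j\}$.

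The heart of the argument is the analysis of the \emph{balanced stratum}, in which all roots share a common valuation $c$ close to $\alpha/r$. Writing $x_j = p^c y_j$ with $y_j$ prime to $p$, the conditions $e_k(\vec{x}) \equiv 0 \bmod p^{\alpha}$ become $e_k(\vec{y}) \equiv 0 \bmod p^{\alpha - kc}$ for $1 \leq k \leq \lfloor \alpha/c \rfloor$, and are vacuous for larger $k$. For $c \in [\lceil\alpha/(r+1)\rceil, \lfloor \alpha/r\rfloor)$, precisely the conditions $k = 1, \dots, r$ are nontrivial, the total codimension equals $\sum_{k=1}^{r}(\alpha - kc) = r\alpha - cn$, and the ambient space has size $p^{n(\alpha - c)}$. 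This gives $p^{(n-r)\alpha}$ solutions per valid $c$ --- the $c$-dependence cancels exactly because $n = r(r+1)/2$ --- and summing over the $\sim \alpha/(r(r+1))$ admissible values of $c$ produces the main term $\sim \alpha p^{\alpha(n-r)}$.

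For the upper bound, the remaining task is to show that this stratum dominates. Strata with a common valuation outside the optimal range yield strictly smaller codimension-count exponents and contribute negligibly after summation. For genuinely imbalanced profiles, with two or more distinct valuation levels, I would argue by induction on $\alpha$ and on $n$: isolating the roots of smallest valuation $c_\ast$ and invoking the constraint $e_1(\vec{x}) \equiv 0 \bmod p^{\alpha}$ to force cancellation among them reduces to an instance of the same problem on fewer variables and smaller $\alpha$. The divisor-function bound controls the number of valuation profiles and absorbs the resulting polynomial-in-$\alpha$ losses.

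For the sharpness claim, one fixes $c$ in the optimal range and exhibits $\bar y_1, \dots, \bar y_n \in \mathbb{F}_p^\ast$ on the mod-$p$ reduction of the constraint variety $\{e_1 = \dots = e_r = 0\}$, then lifts to $\Z/p^{\alpha-c}\Z$ via Hensel's lemma (the Jacobian of the constraint map at a point with distinct $\bar y_j$ is a nonvanishing Vandermonde-type determinant). For $p$ sufficiently large, Lang--Weil guarantees the expected count of $\mathbb{F}_p$-points on the constraint variety. The excluded case $n = 3$ corresponds to the plane conic $y_1^2 + y_1 y_2 + y_2^2 = 0$, which has too few $\mathbb{F}_p^\ast$-points precisely when $p \not\equiv 1 \bmod 3$ (covering a positive density of primes), making this clean construction fail. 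The main obstacle will be the inductive step of the upper bound: tracking how partial cancellations among the $e_k$ distribute across multiple valuation scales produces a case analysis that proliferates with $r$, and extracting the exact codimension budget requires delicate bookkeeping to prevent double-counting of lifts.
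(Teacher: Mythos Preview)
The paper does not actually prove this proposition: it is quoted from \cite{Hickman2}, and the only information given about the argument is the remark that it ``uses methods akin to those employed by Denef and Sperber \cite{Denef2001}'' and that the analysis ``applies to \emph{systems} of polynomial congruences.'' Your valuation-stratification outline is very much in that spirit. The identification of the balanced stratum $c_j \equiv c$ with $\alpha/(r+1) \le c < \alpha/r$, the codimension count $\sum_{k=1}^{r}(\alpha-kc) = r\alpha - cn$, and the exact cancellation of the $c$-dependence when $n=r(r+1)/2$ are all correct and do recover the main term $\alpha p^{\alpha(n-r)}$. Your lower-bound mechanism (Lang--Weil on the affine variety $\{e_1=\dots=e_r=0\}$ followed by Hensel lifting, with the $n=3$ obstruction coming from the arithmetic of the conic $y_1^2+y_1y_2+y_2^2=0$) is also the right idea.

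The gap is the one you flag yourself, and it is more serious than your sketch suggests. The proposed reduction for imbalanced strata --- isolate the roots of minimal valuation $c_\ast$ and use $e_1\equiv 0$ to ``reduce to the same problem on fewer variables'' --- does not work as stated: the condition $e_1\equiv 0$ does not decouple the low-valuation block from the rest, and the higher $e_k$ mix all valuation levels multiplicatively, so no clean inductive instance emerges. The Denef--Sperber framework (and the system-level extension in \cite{Hickman2}) avoids this by organising all strata via the Newton polyhedron of the defining system, which provides a uniform bookkeeping for the codimension contributions across scales rather than an ad hoc case split. Without importing that machinery or something equivalent, the upper bound for the imbalanced strata remains genuinely open in your outline.
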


Proposition \ref{degenerate proposition} treats a very different situation from that considered in Lemma \ref{non-degenerate lemma}: here the components of $\vec{y}$ are identical and therefore have no $p$-adic separation. This result also shows that, in general, Conjecture \ref{congruence conjecture} is \emph{not} sharp (this can already be observed from the proof of Corollary \ref{2d congruence corollary}) since Conjecture \ref{congruence conjecture} only predicts the trivial bound $\mathbf{N}(\vec{0}_n; p^{\alpha}) \leq p^{\alpha n}$. The restriction to triangular degrees $n = r(r+1)/2$ in Proposition \ref{degenerate proposition} is merely for expository purposes: in \cite{Hickman2} sharp estimates for $\mathbf{N}(\vec{0}_n; p^{\alpha})$ are obtained in all dimensions, but the statement of the result for general $n$ is slightly involved. Curiously, the $n=3$ case behaves differently from all other degrees. 

The proof of Proposition \ref{degenerate proposition} uses methods akin to those employed by Denef and Sperber \cite{Denef2001} (see also \cites{Cluckers2008, Cluckers2010}) to study exponential sum bounds related to the Igusa conjecture. An interesting feature of the analysis in \cite{Hickman2} is that it applies to \emph{systems} of polynomial congruences, rather than just a single polynomial congruence as considered in \cite{Denef2001}. There is mounting evidence that these methods can be pushed to prove more substantial partial results on Conjecture \ref{congruence conjecture}, and perhaps even lead to a full resolution of the problem, and the authors hope to investigate this further in future work.




\appendix 
\section*{Appendix}

\section{Counting solutions to linear systems of congruences}

The following lemma was used a number of times in the text. 

\begin{lemma}\label{linear system lemma} Let $N \in \N$, $\vec{b} \in [\Z/N\Z]^n$ and suppose $A \in \mathrm{Mat}_n(\Z/N\Z)$ satisfies $\det A \not\equiv 0 \bmod N$. The number of solutions $\vec{x} \in [\Z/N\Z]^n$ to the system of linear congruences $A \vec{x} = \vec{b}$ is either 0 or $N/|\det A|$. 
\end{lemma}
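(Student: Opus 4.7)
The natural approach is to reduce the system to a diagonal one via the Smith normal form, at which point the congruences decouple and can be counted scalar by scalar.

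To begin, I would lift $A$ to an integer matrix $\tilde A \in \mathrm{Mat}_n(\Z)$ and $\vec b$ to $\tilde{\vec b} \in \Z^n$. The Smith normal form over the PID $\Z$ produces unimodular $U, V \in \mathrm{GL}_n(\Z)$ and a diagonal matrix $D = \mathrm{diag}(d_1, \ldots, d_n)$ with $d_1 \mid d_2 \mid \cdots \mid d_n$ such that $U \tilde A V = D$. Reducing this factorisation modulo $N$ and performing the invertible change of variables $\vec x = V \vec y$, the system $A \vec x \equiv \vec b \bmod N$ becomes equivalent to the diagonal system $D \vec y \equiv U \tilde{\vec b} \bmod N$, with the same number of solutions.

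The second step is to count solutions of the decoupled system $d_i y_i \equiv c_i \bmod N$ for $i = 1, \ldots, n$, where $\vec c = U \tilde{\vec b}$. A standard computation in elementary number theory shows that each scalar congruence has either $0$ solutions (when $\gcd(d_i, N) \nmid c_i$) or exactly $\gcd(d_i, N)$ solutions in $\Z/N\Z$. Consequently the total solution count is either $0$ or $\prod_{i=1}^n \gcd(d_i, N)$.

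The final and most delicate step is to reconcile the combinatorial product $\prod_{i} \gcd(d_i, N)$ with the determinantal quantity $N/|\det A| = \gcd(\det A, N)$. Since $\det A \equiv \pm d_1 d_2 \cdots d_n \bmod N$, this amounts to the prime-by-prime identity $\prod_i \gcd(d_i, N) = \gcd(d_1 \cdots d_n, N)$, which for each prime $p$ reads $\sum_i \min(v_p(d_i), v_p(N)) = \min(\sum_i v_p(d_i), v_p(N))$. I expect this last step to be the main obstacle: the divisibility chain $d_1 \mid \cdots \mid d_n$ constrains how the $p$-valuations of the invariant factors can be distributed, and the hypothesis $\det A \not\equiv 0 \bmod N$ prevents the invariant factors from collectively absorbing more $p$-divisibility than $N$ itself carries, so that the two quantities coincide and the lemma follows.
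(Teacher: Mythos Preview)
Your overall strategy---diagonalising via Smith normal form and then counting scalar congruences---is the same as the paper's; the only difference is that the paper first reduces to the prime-power case $N = p^{\alpha}$ by the Chinese remainder theorem and then diagonalises over the local ring. The gap lies in your final step: the prime-by-prime identity
\[
\sum_i \min\bigl(v_p(d_i), v_p(N)\bigr) \;=\; \min\Bigl(\sum_i v_p(d_i),\, v_p(N)\Bigr)
\]
does \emph{not} follow from the hypothesis $N \nmid d_1 \cdots d_n$ together with the divisibility chain $d_1 \mid \cdots \mid d_n$. The hypothesis only guarantees $\sum_i v_p(d_i) < v_p(N)$ for \emph{some} prime $p \mid N$, not for every one. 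Concretely, take $N = 6$, $n = 2$, $d_1 = d_2 = 2$: then $d_1 d_2 = 4 \not\equiv 0 \bmod 6$, yet $\gcd(d_1,6)\gcd(d_2,6) = 4$ while $\gcd(d_1 d_2, 6) = 2$. Correspondingly, the matrix $A = 2I_2$ over $\Z/6\Z$ satisfies $\det A = 4 \not\equiv 0 \bmod 6$ but has kernel of cardinality $4$, not $N/|\det A| = 2$.

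This is not a defect of your route in particular: the lemma as stated is false for composite $N$, and the paper's proof has the matching gap---its ``multiplicative'' reduction to prime powers tacitly assumes $\det A \not\equiv 0 \bmod p^{\alpha}$ for each $p^{\alpha} \,\|\, N$, which does not follow from $\det A \not\equiv 0 \bmod N$. What is true, and what suffices for both applications in the paper, is the prime-power case $N = p^{\alpha}$: there the hypothesis $p^{\alpha} \nmid d_1 \cdots d_n$ gives $\sum_i v_p(d_i) < \alpha$, hence in particular each $v_p(d_i) < \alpha$, and both sides of your valuation identity equal $\sum_i v_p(d_i)$. (Note that the divisibility chain is not actually needed for this step.)
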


\begin{proof} Since the desired estimate is multiplicative, one may assume without loss of generality that $N = p^{\alpha}$ is a power of a fixed prime. Furthermore, for any $A \in \mathrm{Mat}_n(\Z/N\Z)$ there exist unimodular matrices $U, V \in \mathrm{GL}_n(\Z/N\Z)$ such that $VAU$ is diagonal (this a consequence of the existence of the \emph{Smith normal form} of the matrix $A$, which holds for arbitrary (that is, not necessarily square) matrices over any principal ideal domain: see, for instance, \cite{Lang1984}). Since $|\det A| = |\det VAU|$ and
\begin{equation*}
|\{\vec{x} \in [\Z/N\Z]^n : A \vec{x} = \vec{b} \}| = |\{\vec{x} \in [\Z/N\Z]^n : VAU \vec{x} = V\vec{b} \}|,
\end{equation*}
one may further assume that $A$ itself is diagonal. 

Let $\lambda_i$ denote the $(i,i)$-entry of $A$ and write $p^{\psi_i} = \gcd(\lambda_i, p^{\alpha})$ for $i = 1, \dots, n$. The number of solutions $x \in \Z/p^{\alpha}\Z$ to the univariate system
\begin{equation*}
\lambda_i x = b_i
\end{equation*}
is equal to 0 if $p^{\psi_i} \nmid b_i$ and is equal to $p^{\psi_i}$ otherwise. Thus, the total number of solutions to the system is at most $p^{\psi_1 + \dots + \psi_n}$. Finally, by hypothesis $p^{\alpha} \nmid \det A =  \lambda_1 \dots \lambda_n$ and so 
\begin{equation*}
p^{\psi_1 + \dots + \psi_n} = \gcd(\det A, p^{\alpha}) = \frac{N}{|\det A|},
\end{equation*}
as required. 
\end{proof}

\section*{Acknowledgments} This paper updates and expands work that was initiated by the second author over 20 years ago. The authors would like to thank M. Cowling for proposing the original line of investigation and for his encouragement. This material is based upon work supported by the National Science Foundation under Grant No. DMS-1440140 while the first author was in residence at the Mathematical Sciences Research Institute in Berkeley, California, during the Spring 2017 semester.

\bibliographystyle{amsplain}


\begin{dajauthors}
\begin{authorinfo}[Hickman]
  Jonathan Hickman\\
 Eckhart Hall Room 414, \\
 Department of Mathematics, \\
 University of Chicago, \\
 5734 S. University Avenue, \\
 Chicago, Illinois,  60637, US.\\
  jehickman\imageat{}uchicago\imagedot{}edu \\
  \url{http://math.uchicago.edu/~j.e.hickman/}
\end{authorinfo}
\begin{authorinfo}[Wright]
  James Wright\\
  Room 4621 James Clerk Maxwell Building, \\
  The King's Buildings, \\
  Peter Guthrie Tait Road, \\
  Edinburgh, \\
  EH9 3FD, \\
  UK.\\
  j.r.wright\imageat{}ed\imagedot{}ac\imagedot{}uk 
\end{authorinfo}

\end{dajauthors}

\end{document}